\documentclass[10pt,a4paper]{article}
\usepackage[T1]{fontenc}

\usepackage[utf8]{inputenc}
\usepackage{authblk}
\usepackage{amsmath,amssymb,amsthm,mathtools,esint,bm,bbm}
\usepackage{mathrsfs}
\usepackage{bookmark}
\usepackage{amsmath}  

\usepackage{color}
\usepackage{framed}
\usepackage{enumitem}
\allowdisplaybreaks[3]
\newtheorem{definition}{Definition}[section]
\newtheorem{theorem}[definition]{Theorem}
\newtheorem{lemma}[definition]{Lemma}

\newtheorem{corollary}[definition]{Corollary}
\newtheorem{remark}[definition]{Remark}
\numberwithin{equation}{section}
\providecommand{\abs}[1]{\left|#1\right|}
\providecommand{\norm}[1]{\lVert#1\rVert}
\providecommand{\paren}[1]{\left( #1 \right)}
\providecommand{\maren}[1]{\left[ #1 \right]}
\providecommand{\baren}[1]{\left\{ #1 \right\}}
\providecommand{\mean}[1]{\langle #1\rangle}
\providecommand{\e}[1]{\mathbb{E}\left[#1\right]}

\newcommand{\uep}{\ensuremath{u^\varepsilon}}
\newcommand{\uepi}{\ensuremath{u^{\varepsilon_i}}}

\newcommand{\lamep}{\ensuremath{\lambda^\varepsilon}}
\newcommand{\nuep}{\ensuremath{\nu^\varepsilon}}
\newcommand{\nuepi}{\ensuremath{\nu^{\varepsilon_i}}}
\newcommand{\muep}{\ensuremath{\mu^\varepsilon}}
\newcommand{\fep}{\ensuremath{F^\varepsilon}}
\newcommand{\jep}{\ensuremath{J^\varepsilon}}

\newcommand{\vep}{\ensuremath{v^\varepsilon}}
\newcommand{\hep}{\ensuremath{h^\varepsilon}}
\newcommand{\gep}{\ensuremath{g^\varepsilon}}

\renewcommand{\r}{{\mathbb{R}}}
\newcommand{\sd}{{\mathbb{S}^{d-1}}}
\newcommand{\rd}{{\mathbb{R}^d}}

\newcommand{\zd}{{\mathbb{Z}^d}}
\newcommand{\ag}{\ensuremath{L^2(G)}}
\newcommand{\ard}{\ensuremath{L^2(\rd)}}
\newcommand{\prd}{\ensuremath{L^p(\rd)}}
\newcommand{\qrd}{\ensuremath{L^{p'}(\rd)}}
\newcommand{\ardnu}{\ensuremath{L^2(\rd, \nuep)}}
\newcommand{\ardnui}{\ensuremath{L^2(\rd, \nuepi)}}
\newcommand{\ardloc}{\ensuremath{L^2_{\text{loc}}(\rd)}}
\newcommand{\prdloc}{\ensuremath{L^p_{\text{loc}}(\rd)}}
\newcommand{\qrdloc}{\ensuremath{L^{p'}_{\text{loc}}(\rd)}}

\newcommand{\ardw}{\ensuremath{L^2_{w}(\rd)}}

\newcommand{\mathp}{\ensuremath{\mathbb{P}}}

\newcommand{\varezero}{\ensuremath{\varepsilon\rightarrow0}}

\newcommand{\hnorm}{\ensuremath{H^{\alpha/2}(\rd)}}
\newcommand{\wap}{\ensuremath{W^{\alpha/p,p}\left(\mathbb{R}^{d}\right)}}
\newcommand{\hsp}{\hspace*{0.25cm}}

\newcommand{\rmd}{\ensuremath{\,\mathrm{d}}}
\newcommand{\ccinfty}{\ensuremath{C_c^\infty(\rd)}}

\newcommand{\linftyg}{\ensuremath{L^\infty(G)}}
\newcommand{\linftyrd}{\ensuremath{L^\infty(\rd)}}

\newcommand{\lvar}{\ensuremath{L^\varepsilon}}

\newcommand{\intrd}{\ensuremath{\int_{\mathbb{R}^d}}}
\newcommand{\iintrdd}{\ensuremath{\iint_{\mathbb{R}^d\times\mathbb{R}^d}}}
\newcommand{\blv}{\ensuremath{\Lambda^\varepsilon}}

\DeclareMathOperator{\supp}{supp}
\usepackage{hyperref}
\title{Periodic and stochastic homogenization of general nonlocal operators with oscillating
coefficients}

\author[a]{Xiaofeng Jin}
\author[a]{Wentao Huo}
\author[a]{Lingwei Ma}
\author[a]{Zhenqiu Zhang\thanks{Corresponding author.}}

\affil[a]{School of Mathematical Sciences and LPMC, Nankai University, Tianjin, 300071, P.~R.~China}

\date{}
\begin{document}
\maketitle
\footnotetext[1]{E-mail: 1120220040@mail.nankai.edu.cn (X. Jin), 
1120240051@mail.nankai.edu.cn (W. Huo),  malingwei@nankai.edu.cn (L. Ma), 
zqzhang@nankai.edu.cn (Z. Zhang).} 
\begin{abstract}
  This paper investigates homogenization problems for the nonlocal operators with rapidly oscillating coefficients in the cases of periodic and random statistically homogeneous micro-structures. These operators involve the fractional Laplacian and some operators compared to it. Based on  the $\Gamma$-convergence method and compactness arguments, we prove the homogenization theorems for these nonlocal operators with product-type and symmetric coefficient-structured kernels respectively. Furthermore, these results are extended to general nonlinear nonlocal equations.
\end{abstract}

\textbf{Keywords:} Periodic homogenization; stochastic homogenization; nonlocal operators;\\
\indent oscillating coefficients; general singular kernels

\textbf{Mathematics Subject classification (2020):} 35B27, 47G20, 60H25, 45M05

\section{Introduction}
In this paper, we study the homogenization problems for nonlocal integro-differential equations. The operators under consideration are of the form
\begin{equation}\label{eq,main equation}
\lvar u(x) = \int_{\mathbb{R}^d} (u(y)-u(x))K(x,y) \blv(x,y)\rmd y \quad \text{for } x \in \mathbb{R}^d,
\end{equation}
where the symmetric kernel $K$ is of a very general nature, specifically encompassing both the fractional Laplacian kernel and other kernels comparable to it. The oscillating coefficient $\blv$ is either periodic or  random statistically homogeneous. 

    Fractional and more general nonlocal operators arise naturally in modeling long-range interactions, surpassing the capabilities of classical integer-order differential operators. They play a central role in diverse fields such as anomalous diffusion, image processing, mathematical finance, flow in porous media, and population dynamics \cite{MR1867081, MR2120547, MR2480109, MR3327993}.
To model heterogeneous media with complex microstructures, rapidly oscillating coefficients are introduced into these nonlocal equations \cite{MR4132119}. These coefficients can represent periodic composite materials or statistically homogeneous random media. This wide range of applications provides a strong motivation for the mathematical analysis of such problems, with homogenization theory serving as the key tool to derive effective macroscopic models that encapsulate the large-scale dynamics while incorporating the averaged effect of the small-scale heterogeneities.

    The homogenization theory for second-order elliptic equations in divergence form has a long and well-established history \cite{MR225015, MR240443, MR348255, MR542557, MR712714}. Substantial progress over the past two decades has extended this theory to various directions, including random correctors \cite{MR3713047, MR4103433}, quantitative estimates \cite{MR4377865, MR4840548}, boundary layers \cite{MR3862089, MR3902170}, and multiple homogenization \cite{MR4566686}. Recent work has also studied models with random convolution-type potentials \cite{Jin2025}.We also refer to the monographs \cite{MR3838419} and 
\cite{MR3932093}, which provide comprehensive treatments of large-scale regularity theory and 
convergence rates in periodic and stochastic homogenization, respectively.

    In the nonlocal setting, the study of homogenization theory for nonlocal operators like \eqref{eq,main equation} with integrable kernels  have been extensively developed. For example, in \cite{MR3595876}, Piatnitski and Zhizhina considered the homogenization of the following diffusive scaling nonlocal operators with integrable kernels in periodic media:
\begin{equation}\label{eq:operator with integrable kernel}
  \lvar_1 u(x)=\varepsilon^{-d-2}\intrd \paren{u(y)-u(x)}K\paren{\frac{x-y}{\varepsilon}}\blv(x,y)\rmd y.
\end{equation} 
where the kernel $K(z)\in L^1(\rd)\cap\ardloc$  is nonnegative and symmetric with finite second moments, and the periodic coefficient $\blv(x,y)$ has the product structure $\lambda(x/\varepsilon)\mu(y/\varepsilon)$. Using the corrector method, they proved resolvent convergence to a local second-order elliptic operator. A follow-up work \cite{MR4053030}  extended the initial approach for periodic media to a random framework, in which the coefficient $\blv(x,y)$ takes the form $\lambda(x/\varepsilon,\omega)\mu(y/\varepsilon,\omega)$. Furthermore, under  stronger assumptions on the kernel $K$, quantitative homogenization results can be established, see \cite{MR4532955} and \cite{MR4720001}. Qualitative homogenization results for non-symmetric kernels and for parabolic-type problems also exist \cite{MR4033742, MR4853448}. 

On the other hand,  the kernel of the well-known fractional Laplacian, expressed as $\abs{x}^{-d-\alpha}$ for $\alpha\in(0,2)$, exhibits a clear singularity at the origin. Therefore, nonlocal operators with singular kernels are more general than those with integrable kernels, and have attracted significant interest in the study of homogenization problems.    With $K(x,y)$ introduced at the beginning of the paper chosen as $\abs{x-y}^{-d-\alpha}$, the work \cite{MR4000213} considered the homogenization of the equation
\begin{equation}\label{eq:levy operator}
  \lvar_2 u(x)-mu(x)=\intrd \frac{u(y)-u(x)}{\abs{x-y}^{d+\alpha}}\blv(x,y)\rmd y-mu(x)=f(x).
\end{equation}
 From a formal viewpoint, the operator $\lvar_2$ in \eqref{eq:levy operator} resembles $\lvar_1$ in \eqref{eq:operator with integrable kernel} if we represent $\abs{x-y}^{-d-\alpha}$ as $\varepsilon^{-d-\alpha}K((x-y)/\varepsilon)$. Nevertheless, the lack of integrability of $\abs{z}^{-d-\alpha}$ in $L^1(\rd)$ presents a major obstacle to constructing correctors. As a result, \cite{MR4000213} turned to the $\Gamma$-convergence  and compactness arguments  to homogenize  equation \eqref{eq:levy operator}, in contrast to the corrector method   employed in \cite{MR3595876, MR4053030}.  
 We also mention that \cite{MR4261110} established the almost sure homogenization of symmetric nonlocal Dirichlet forms with the fractional Laplacian kernel $\abs{x-y}^{-d-\alpha}$ and  the one-parameter stationary ergodic coefficient $\Lambda(x/\varepsilon,y/\varepsilon;\omega)$  in
the resolvent sense. Related homogenization results involving kernels of the form $\abs{x-y}^{-d-\alpha}$ can also be found in \cite{MR2560294,MR2981018,MR4348681}.
 
    Despite this progress, existing homogenization theory for singular kernels largely focuses on the fractional Laplacian, leaving general singular kernels relatively unexplored. This paper aims to bridge this gap.  We consider a broad class of symmetric kernels $K(x,y)$ that include the fractional Laplacian as a special case but extend far beyond it, encompassing both convolution-type and more general forms. A key technical challenge in the homogenization analysis of such general operators  is the construction of a suitable decomposition of $\mathbb{R}^d \times \mathbb{R}^d$ that is compatible with our weak assumptions on the kernel $K$. To overcome this, we develop a novel decomposition strategy that generalizes the standard approach used in \cite{MR4000213} for the fractional Laplacian. This new framework is crucial for our analysis and naturally recovers the classical case, underscoring the generality of our results. By extending the $\Gamma$-convergence framework and employing compactness arguments, we prove homogenization theorems in the resolvent sense and further extend the results to a class of nonlinear nonlocal equations.

The remainder of this paper is organized as follows. Section 2 presents the precise assumptions on the kernel $K$ and the coefficient $\blv$, states our main homogenization theorems for linear nonlocal equations, and extends the framework to nonlinear nonlocal equations by introducing relevant assumptions on the nonlinearity and establishing the corresponding homogenization results. Section 3 presents preliminary results and auxiliary lemmas used in the subsequent proofs. Section 4 provides the proofs of Theorems \ref{thm:periodic product structure} and \ref{thm:random product structure} for product-type coefficients via the $\Gamma$-convergence method. Section 5 completes the proofs of Theorems \ref{thm:periodic sym structure} and \ref{thm:random sym structure} for symmetric-type coefficients using a compactness argument. Finally, Section 6 presents the proofs of Theorems \ref{thm1} and \ref{thm:nonlinear random}, concerning the homogenization of nonlinear nonlocal equations. 
\section{Assumptions and main results}
This section presents the precise framework for our homogenization study. We begin by stating the fundamental assumptions on the kernel $K$, which are common to all our results for linear nonlocal operators. We then introduce the two distinct classes of coefficients that will be examined: the product-type and the symmetric-type. For each class, we specify the corresponding additional assumptions (periodic or stationary ergodic) and state the respective homogenization theorem.
\subsection{Assumptions on the kernel}
The kernel $K$ is always assumed to be measurable. The following hypotheses encompass the two primary cases of interest:

\textbf{Case 1:} $K$ is a convolution-type kernel, i.e., $K(x,y)=K(x-y)$.
 
In this case, we assume that $K$ : $\rd\rightarrow\r$ satisfies \eqref{A1}--\eqref{A4} below.
\begin{enumerate}[label=(\textbf{A}\arabic{enumi}),ref=\textbf{A}\arabic{enumi}]
  \item \label{A1} $K$ is nonnegative and symmetric, i.e., for all $z\in\rd$
  \begin{equation*}
  K(z)\geq0\text{,}\quad K(z)=K(-z);
\end{equation*}
  \item \label{A3} There exist constants $\alpha\in(0,2)$ and $C_0>0$
  (depending only on $d$ and $\alpha$) such that for all $r>0$,
  \begin{equation*}
    r^\alpha\int_{\rd\setminus B_r}K(z)\rmd z\leq C_0;
  \end{equation*}
  \item \label{A2}The standard L\'{e}vy-Khintchine condition:
  \begin{equation*}
    \intrd\min\{1,\abs{z}^2\}K(z)\rmd z<\infty;
  \end{equation*}
  \item \label{A4}For the constant $\alpha$ above, there exists a constant $c_0>0$
  (depending only on $d$ and $\alpha$) such that for all $r>0$,
  \begin{equation*}
    r^{\alpha-2}\inf_{e\in\sd}\int_{B_r}\abs{e\cdot z}^2K(z)\rmd z\geq c_0.
  \end{equation*}
\end{enumerate}

\textbf{Case 2:} $K$ is a kernel of non-convolutional type.
 
In this case, we assume that $K$ : $\rd\times\rd\rightarrow\r$ satisfies \eqref{B1}--\eqref{B3} below.
\begin{enumerate}[label=(\textbf{B}\arabic{enumi}),ref=\textbf{B}\arabic{enumi}]
  \item \label{B1} $K$ is nonnegative and symmetric, i.e., for all $x$, $y\in\rd$
  \begin{equation*}
  K(x,y)\geq0\text{,}\quad K(x,y)=K(y,x);
\end{equation*}
  \item \label{B2} There exist constants $\alpha\in(0,2)$ and $C_0>0$
  (depending only on $d$ and $\alpha$) such that for all $r>0$ and $x\in\rd$,
  \begin{equation*}
     r^\alpha\int_{\rd\setminus B_r(x)}K(x,y)\rmd y\leq C_0;
  \end{equation*}
  \item \label{B3}There exist constants $C_1\in(0,1)$ and $C_2>0$ such that for every ball $B\subset\rd$ and $x\in B$,
  \begin{equation*}
    \abs{\{y\in B: K(x,y)\geq C_2\abs{x-y}^{-d-\alpha}\}}\geq C_1\abs{B}.
  \end{equation*}
\end{enumerate}

\begin{remark}
It is straightforward to verify that the kernel of the fractional Laplacian, along with kernels comparable to it, satisfies assumptions \eqref{A1}--\eqref{A4} and \eqref{B1}--\eqref{B3} with appropriate constants.
\end{remark}

\begin{remark}
Assumption \eqref{A4} or \eqref{B3} implies the singularity of the kernel $K$. This singularity can manifest either as non-integrability at the origin or as a blow-up along the diagonal.
Assumption \eqref{B3}, which originates from \cite{MR4111815}, serves an analytical purpose analogous to that of \eqref{A2}--\eqref{A4}, despite their distinct formulations. Specifically, both sets of assumptions are designed to establish a coercivity estimate for the associated bilinear form $(u,u)_K$ (see Lemma \ref{lemma:equivalent norm}), but they apply to structurally different classes of kernels: \eqref{A2}--\eqref{A4} are tailored for convolution-type kernels $K(x-y)$, whereas \eqref{B3} applies to general symmetric kernels $K(x,y)$.
\end{remark}

The kernel assumptions adopted in this work are well-established in the theory of integro-differential equations. They appear, for instance, in the analysis of kinetic equations such as the non-cutoff Boltzmann equation \cite{MR4049224}. This structural compatibility provides a key motivation for adopting \eqref{B1} and \eqref{B2} in our homogenization study. Related assumptions \eqref{A1}--\eqref{A4} have also been employed in the analysis of nonlocal operators \cite{MR4793281}. For a comprehensive probabilistic motivation of assumptions \eqref{A1}--\eqref{A4} in the context of stochastic processes, we refer to the recent monograph \cite{MR4769823}.

\subsection{Homogenization results for product-type coefficients}

We first consider the periodic product structure. 
 We say that $h$ is a $1$-periodic function if $h(x+z)=h(x)$ for all $x\in\rd$ and $z\in\zd$.
 We denote by $\langle h\rangle$ the mean value of $1$-periodic function  $h$, namely, 
 \begin{equation*}
  \mean h:= \int_{[0,1]^d}h(x)\rmd x.
 \end{equation*} 
In this setting we assume that the  coefficient $\blv$ has the following form:
\begin{equation}\label{eq:perodic product}
  \blv(x,y)=\lamep(x)\muep(y):= \lambda\paren{x/\varepsilon}\mu\paren{y/\varepsilon},
\end{equation}
where both $\lambda$ and $\mu$ satisfy periodicity:
\begin{equation}\label{eq:lambda and mu 1-periodic}
  \lambda(x+z)=\lambda(x)\text{,} \hsp\mu(x+z)=\mu(x)
\end{equation}
for all $x\in\rd$ and $z\in\zd$. In addition, they satisfy the bounds
 \begin{equation}\label{eq:lambda mu bound}
   1/\gamma\leq\lambda(x)\leq\gamma\text{,}\hsp 1/\gamma\leq\mu(x)\leq\gamma
 \end{equation} 
for some constant $\gamma\geq1$ and all $x\in\rd$.

Due to the lack of regularities for the coefficient $\blv$ and the kernel $K$, the expression $\lvar u(x)$  may not be well defined in the classical sense even for $u\in \ccinfty$. Therefore, throughout this paper, we work with weak solutions.  We now provide the corresponding definition for periodic $\blv$ defined in \eqref{eq:perodic product} and the definition for random $\blv$ defined in \eqref{eq:random product} below can be formulated analogously.
\begin{definition}
  For  $m>0$ and  $f\in\ard$, we say that $\uep\in\hnorm$ is a (weak) solution of
\begin{equation}\label{eq:uep satisfies equation}
    \paren{m-\lvar}\uep=f,
  \end{equation}
if, for all $v\in\hnorm$, 
\begin{equation}\label{eq:definition of weak solution of lvar product form}
\begin{aligned}
   m\intrd \uep(x)v(x)\nuep(x)\rmd x&-\iintrdd \paren{\uep(y)-\uep(x)}K(x,y)v(x)\muep(x)\muep(y)\rmd y\rmd x\\
 &=\intrd f(x)v(x)\nuep(x)\rmd x,
 \end{aligned}
\end{equation} 
where $\nuep:=\muep/\lamep$.
Formally, this can be written as
\begin{equation}\label{eq:weak solution def1}
  m(\uep,v)_{\ardnu}-\paren{\lvar \uep, v}_{\ardnu}=(f,v)_{\ardnu}.
\end{equation}
\end{definition} 

To illustrate the main results of this paper, for $\varepsilon\rightarrow 0$, we first present the definition of the limit operator $L^0$:
\begin{equation}\label{eq:L0 periodic product structure}
      L^0u(x)=\intrd \overline{\Lambda}(x,y)\paren{u(y)-u(x)}K(x,y)\rmd y,
    \end{equation}
    where 
    \begin{equation}\label{eq:lambda bar for periodic product}
      \overline{\Lambda}(x,y)=\frac{\mean{\mu}^2}{\mean{\mu/\lambda}}.
    \end{equation}
    Similarly, we define a weak solution for $\paren{m-L^0}u^0=f$ as follows.
    \begin{definition}
      For  $m>0$ and  $f\in\ard$,  we say that $u^0\in\hnorm$ is a (weak) solution of  
\begin{equation}\label{eq:u0 satisfies equation}
    \paren{m-L^0}u^0=f,
    \end{equation}
if, for all $v\in\hnorm$, 
\begin{equation}\label{eq:definition of weak solution L0 product form}
   m\intrd u^0(x)v(x)\rmd x-\frac{\mean{\mu}^2}{\mean{\mu/\lambda}}\iintrdd \paren{u^0(y)-u^0(x)}K(x,y)v(x)\rmd y\rmd x
 =\intrd f(x)v(x)\rmd x.
\end{equation}  
 Formally, this can be written as
\begin{equation*}
  m(u^0,v)-\paren{L^0 u^0, v}=(f,v).
\end{equation*}  
Here and subsequently, for simplicity of notation, we denote $(\cdot,\cdot):=(\cdot,\cdot)_{L^{2}\left(\mathbb{R}^{d}\right)}$.

    \end{definition}
    
The existence and uniqueness of the limit weak solution $u^0$ and the solution $\uep$ will be proved in Section 3  via variational techniques.
The first homogenization result is presented below. Note that we adopt uniform notation $K(x,y)$ for both kernel functions considered in this paper when presenting our results unless otherwise stated.
\begin{theorem}\label{thm:periodic product structure}
  Assume that\eqref{A1}--\eqref{A4} or \eqref{B1}--\eqref{B3} hold. Let $\blv$ satisfy \eqref{eq:perodic product}--\eqref{eq:lambda mu bound}. For a given constant $m>0$ and each $f\in\ard$, let $\uep$ be the solution of \eqref{eq:uep satisfies equation} and $u^0$ be the solution of
  \eqref{eq:u0 satisfies equation}.
 Then, as $\varezero$, $\uep$ converges to $u^0$ strongly in $\ard$ and weakly in  $\hnorm$.
\end{theorem}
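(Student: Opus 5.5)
The plan is to recast \eqref{eq:uep satisfies equation} as the Euler--Lagrange equation of a quadratic functional and then use $\Gamma$-convergence together with compactness. Since $K$ is symmetric, the double integral in \eqref{eq:definition of weak solution of lvar product form} symmetrizes. Hence $\uep$ minimizes
\[
F^\varepsilon(u)=\frac{1}{4}\iintrdd (u(x)-u(y))^2K(x,y)\muep(x)\muep(y)\rmd x\rmd y+\frac{m}{2}\intrd u^2\nuep\rmd x-\intrd fu\,\nuep\rmd x
\]
over $\hnorm$. Likewise $u^0$ minimizes the analogous functional $F^0$, which has coefficient $\mean{\mu}^2/\mean{\mu/\lambda}$ in front of the energy, weight $1$ in the $L^2$ term, and source $(f,u)$.

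The difficulty is that the weight $\nuep$ oscillates in the $L^2$ term. I will remove it by a rescaling. Since $\nuep\rightharpoonup\mean{\mu/\lambda}$ weakly-$*$, I divide $F^\varepsilon$ by $\mean{\mu/\lambda}$, or equivalently compare against $F^0$ multiplied by $\mean{\mu/\lambda}$. The effective energy coefficient then equals the weak limit of $\muep(x)\muep(y)$, namely $\mean{\mu}^2$, because $\muep\otimes\muep\rightharpoonup\mean{\mu}^2$ on $\rd\times\rd$.

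\textbf{Step 1 (uniform bounds).} Test the equation with $v=\uep$. Using \eqref{eq:lambda mu bound} and the coercivity of Lemma~\ref{lemma:equivalent norm}, where $(u,u)_K+\|u\|^2_{L^2}$ is equivalent to the $\hnorm$ norm under either \eqref{A1}--\eqref{A4} or \eqref{B1}--\eqref{B3}, I get $\|\uep\|_{\hnorm}\le C\|f\|_{\ard}$. By Rellich on balls I extract a subsequence with $\uep\rightharpoonup u$ weakly in $\hnorm$ and strongly in $\ardloc$. Strong convergence in $\ard$ also needs a tail estimate. For this I test with $\uep\phi_R^2$, where $\phi_R$ is a cutoff vanishing on $B_R$, and use \eqref{A3}/\eqref{B2} to bound the commutator term $\iint(\phi_R(x)-\phi_R(y))^2K$ by $CR^{-\alpha}$. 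This gives $\int_{|x|>2R}|\uep|^2\le C(R^{-\alpha}+\|f\|_{L^2(|x|>R)})$ uniformly in $\varepsilon$.

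\textbf{Step 2 (passing to the limit).} Rather than doing full $\Gamma$-convergence, I will pass to the limit directly in \eqref{eq:definition of weak solution of lvar product form} with a fixed test function $v\in\ccinfty$. The terms $m\int\uep v\nuep$ and $\int fv\nuep$ converge to $\mean{\mu/\lambda}(m\int uv)$ and $\mean{\mu/\lambda}\int fv$, because each is a product of a strong $L^2$ limit with the weak-$*$ limit of $\nuep$.

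The energy term is the main obstacle. Written symmetrically, it is $\frac12\iint(\uep(x)-\uep(y))(v(x)-v(y))K\muep(x)\muep(y)$, a product of a weakly converging difference quotient with an oscillating coefficient. I plan to split $\rd\times\rd$ into three regions:
\begin{itemize}
\item a near-diagonal region $\{|x-y|<\delta\}$, where Cauchy--Schwarz and the smoothness of $v$ give a bound $C\|\uep\|_{\hnorm}\big(\iint_{|x-y|<\delta,\,x\in\supp v}|x-y|^2K\big)^{1/2}$, which is small uniformly in $\varepsilon$ by \eqref{A2}/\eqref{B2};
\item a far region $\{|x-y|>R\}$, which is small by \eqref{A3}/\eqref{B2};
\item the middle region $\{\delta\le|x-y|\le R\}$ with $x$ in a compact set.
\end{itemize}
On the middle region $K$ is integrable, so $(\uep(x)-\uep(y))(v(x)-v(y))K$ converges strongly in $L^1$ by the local strong convergence, while $\muep(x)\muep(y)\rightharpoonup\mean{\mu}^2$ weakly-$*$ in $L^\infty$. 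This is the decomposition adapted to a general $K$ mentioned in the introduction.

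\textbf{Step 3 (conclusion).} The middle term therefore tends to its limit with coefficient $\mean{\mu}^2$. Dividing by $\mean{\mu/\lambda}$ shows that $u$ solves \eqref{eq:definition of weak solution L0 product form} for $v\in\ccinfty$, and by density for all $v\in\hnorm$. Uniqueness from Section~3 gives $u=u^0$, so the whole family converges, weakly in $\hnorm$ and, with the tail estimate, strongly in $\ard$.

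If the direct limit on the middle region proves delicate, I would instead pass to the limit in $\iint(\uep(x)-\uep(y))^2K\muep\muep$ via the $\Gamma$-liminf and limsup, using recovery sequences $u_\varepsilon=u$ for smooth $u$. The same region decomposition handles both.
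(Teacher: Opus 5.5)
Your proposal is correct, but it argues differently from the paper. The paper proves Theorem~\ref{thm:periodic product structure} by $\Gamma$-convergence. It shows that $\fep$ $\Gamma$-converges to $F^0$ in the $\ardloc\cap\ardw$ topology, then identifies the limit of the minimizers via the fundamental theorem of $\Gamma$-convergence.

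For the $\Gamma$-$\liminf$, the paper splits $\rd\times\rd$ by level sets of the kernel: $G_1^\delta=\{K\le\delta^{-(d+\alpha)},\ |x|+|y|<1/\delta\}$ and its complement. This level-set split, not a split in $|x-y|$, is the ``novel decomposition'' the introduction refers to. On $G_1^\delta$ the kernel is bounded, so the quadratic Lemma~\ref{lemma:bounde domain periodic convergence} applies. On the complement the paper uses only nonnegativity of the $\varepsilon$-integrand and smallness of the limit integral.

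You instead run the compactness argument that the paper reserves for symmetric coefficients in Section~5:
\begin{itemize}
\item you pass to the limit in the weak formulation;
\item you absorb the weight $\nuep$ by dividing by $\mean{\mu/\lambda}$;
\item you use the classical near/middle/far split in $|x-y|$, with explicit rates $\delta^{2-\alpha}$ and $R^{-\alpha}$.
\end{itemize}
Your route is more elementary. It avoids the $\Gamma$-convergence machinery and needs only weak-$*$ convergence of $\muep(x)\muep(y)$ tested against a strongly $L^1$-convergent integrand. The price is that it uses the quantitative content of \eqref{A3}/\eqref{B2} in the identification step, whereas the paper's split needs little beyond $K<\infty$ a.e.

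Your Caccioppoli-type tail estimate is also a cleaner route to strong $L^2$ convergence. The paper argues by contradiction, comparing $\fep(\psi_n\uep)$ with $\fep(\uep)$. That requires the cutoff errors \eqref{eq:psi n limit 1}--\eqref{eq:psi n limit 2} to vanish uniformly in $\varepsilon$. Your estimate makes this uniform tightness explicit and works directly with $K$.

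Two justifications should be tightened.

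First, the middle region. Integrability of $K$ there does not by itself give $L^1$ convergence of $(\uep(x)-\uep(y))(v(x)-v(y))K$ when $\uep$ converges only in $\ardloc$. Use instead the Schur-type bound $\sup_x\int_{|x-y|\ge\delta}K(x,y)\rmd y\le C_0\delta^{-\alpha}$ from \eqref{A3}/\eqref{B2}. With $D=\supp v+B_R$ it gives $\iint_{\{\delta\le|x-y|\le R,\,x\in D\}}|\uep(x)-u(x)|K(x,y)\rmd y\rmd x\le C_0\delta^{-\alpha}\|\uep-u\|_{L^1(D)}$, and the term in $y$ follows by symmetry.

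Second, the commutator bound in your tail estimate. It should be read as $\sup_x\int(\phi_R(x)-\phi_R(y))^2K(x,y)\rmd y\lesssim R^{-\alpha}$, not as a bound on the double integral. Under \eqref{B1}--\eqref{B3} there is no L\'evy--Khintchine assumption. There, both this bound and your near-diagonal estimate rest on $\int_{B_r(x)}|x-y|^2K(x,y)\rmd y\lesssim r^{2-\alpha}$. That inequality follows from \eqref{B2} by summing over dyadic annuli, since $\alpha<2$.
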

We prove Theorem \ref{thm:periodic product structure} via the $\Gamma$-convergence method, 
which was first proposed by De Giorgi in the 1970s (see \cite{MR375037} and \cite{MR448194}) and it has  become an essential tool for studying the convergence of variational problems. For a comprehensive treatment of this convergence and its properties, we refer the reader to Dal Maso's monograph \cite{MR1201152}. Additionally, Braides' book \cite{MR1968440} provides numerous applications of this convergence framework across various contexts.

Next we study the homogenization problem for \eqref{eq,main equation} in random environment. Firstly we need to make some assumptions concerning the standard probability space $\Omega$. We assume that $(\Omega,\mathcal{F},\mathbb{P})$ 
is equipped with an $n$-dimensional measure preserving and ergodic
dynamical system $\{\tau_x\}_{x\in \rd}$. More precisely, 
for every $x, y\in \rd$ and $F\in \mathcal{F}$, 
\begin{itemize}
  \item  Group property: $\tau_x \circ \tau_y =\tau_{x+y}$, $\tau_0 =\bf{Id}$,
  where $\bf{Id}$ is the identical map;
  \item Measure preservation: $\tau_xF \in \mathcal{F}$ and 
  $\mathbb{P}\left(F\right)=\mathbb{P}\left(\tau_xF\right)$;
  \item Measurability: the function $(x, \omega)\to \tau_x\omega$ is
   $\mathcal{B}(\rd)\otimes \mathcal{F}$-measurable, where $\mathcal{B}$ is the 
   Borel $\sigma$-algebra; 
  \item Ergodicity: $\mathbb{P}[(\tau_xF)\triangle(F)]=0$ for all $x\in \rd$ if and only if 
  $\mathp(F)=0$ or $\mathp(F)=1$, where $(\tau_xF)\triangle(F)$ is the symmetric difference.
\end{itemize} 
In this case we assume that $\blv$ has the following form:
\begin{equation}\label{eq:random product}
  \blv(x,y)=\lamep(x)\muep(y):=\lambda\paren{x/\varepsilon,\omega}\mu\paren{y/\varepsilon,\omega},
\end{equation}
where both $\lambda$ and $\mu$ are stationary in the sense that there exist random variables $\boldsymbol{\lambda}$ and $\boldsymbol{\mu}$ such that almost surely, for all $x\in\rd$,
\begin{equation}\label{eq:lambda mu stationary}
  \lambda(x,\omega)=\boldsymbol{\lambda}(\tau_x\omega) \quad\text{and}\quad \mu(x,\omega)=
  \boldsymbol{\mu}(\tau_x\omega).
\end{equation} 
 Similar to the periodic case \eqref{eq:lambda mu bound}, we also assume that there exists a constant $\gamma\geq1$ such that, almost surely, 
 \begin{equation}\label{eq:lambda mu random bound}
   1/\gamma\leq\boldsymbol\lambda\leq\gamma\text{,}\hsp 1/\gamma\leq\boldsymbol\mu\leq\gamma.
 \end{equation} 
 The theorem below is analogue of Theorem \ref{thm:periodic product structure}. It shows that the operator $\lvar$ still admits homogenization for random $\blv$ with product structures. 
\begin{theorem}\label{thm:random product structure}
 Under the assumptions of Theorem \ref{thm:periodic product structure} with \eqref{eq:perodic product}--\eqref{eq:lambda mu bound} replaced by \eqref{eq:random product}--\eqref{eq:lambda mu random bound} and with \eqref{eq:lambda bar for periodic product} replaced by
   \begin{equation}\label{eq:lambda bar for random product}
    \overline{\Lambda}(x,y) =\frac{\e{\boldsymbol\mu}^2}{\e{\boldsymbol\mu/\boldsymbol\lambda}},
    \end{equation}
    the conclusion of Theorem \ref{thm:periodic product structure} still holds in the almost sure sense.
\end{theorem}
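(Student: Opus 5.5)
We will prove Theorem \ref{thm:random product structure} by repeating the $\Gamma$-convergence argument behind Theorem \ref{thm:periodic product structure} for a fixed realization $\omega$ in a set of full measure. Periodicity enters that argument only through weak-$*$ convergence of oscillating factors to their means, and in the random case this is supplied by the ergodic theorem. First we fix the good set of realizations. By Birkhoff's ergodic theorem, applied to the stationary bounded fields $\boldsymbol\mu$, $\boldsymbol\mu/\boldsymbol\lambda$ and $\boldsymbol\lambda/\boldsymbol\mu$ together with a countable dense family of test functions, there is $\Omega_0$ with $\mathbb{P}(\Omega_0)=1$ such that for every $\omega\in\Omega_0$
\begin{equation*}
\mu(\cdot/\varepsilon,\omega)\rightharpoonup \e{\boldsymbol\mu},\qquad \nu^\varepsilon(\cdot,\omega)\rightharpoonup \e{\boldsymbol\mu/\boldsymbol\lambda},\qquad 1/\nu^\varepsilon(\cdot,\omega)\rightharpoonup \e{\boldsymbol\lambda/\boldsymbol\mu}
\end{equation*}
weakly-$*$ in $L^\infty(\rd)$. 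For the product $\muep(x)\muep(y)$ we need more, namely weak convergence to $\e{\boldsymbol\mu}^2$ tested against the singular measure $(\phi(y)-\phi(x))^2K(x,y)\rmd x\rmd y$. We will reproduce the periodic decomposition of $\rd\times\rd$ into a near-diagonal part $\{\abs{x-y}<\delta\}$ and its complement. On the complement the weight is integrable, so an $L^1(\rd\times\rd)$ density is tested against $\mu^\varepsilon\otimes\mu^\varepsilon\rightharpoonup\e{\boldsymbol\mu}^2$. The near-diagonal part is bounded by $\gamma^2 C\delta^{2-\alpha}$, uniformly in $\varepsilon$, by \eqref{A3}/\eqref{A2} or \eqref{B2}. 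Only the one-parameter ergodic limits of $\mu$ are needed, so the exceptional null set does not depend on the test function, after a density argument in $\phi$.

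Fix $\omega\in\Omega_0$. Since $1/\gamma^2\le \nu^\varepsilon,\muep\muep\le\gamma^2$, Lemma \ref{lemma:equivalent norm} gives a uniform $\hnorm$ bound on $\uep$. We test the weak formulation with $\uep$ and use $\norm{\uep}_{\ardnu}\lesssim\norm{f}$. This works because the form is symmetric: the operator is self-adjoint in $\ardnu$ with energy
\begin{equation*}
E^\varepsilon(u)=\tfrac12\iint (u(y)-u(x))^2K\muep(x)\muep(y)+\tfrac m2\int u^2\nu^\varepsilon-\int fu\nu^\varepsilon .
\end{equation*}
By the compact embedding of $H^{\alpha/2}$ into $L^2_{loc}$, combined with tightness coming from the uniform $L^2$ tail control (the same argument as in Section 4), every subsequence has a further subsequence with $\uep\to u^*$ strongly in $\ard$ and weakly in $\hnorm$.

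We identify the limit by passing to the limit directly in the weak formulation, as in the periodic proof. Take $v=\phi/\nu^\varepsilon$-type correctors, or more simply $v=\psi$ with $\psi\in\ccinfty$ fixed, and split the terms.
\begin{enumerate}[label=(\roman*)]
\item The zeroth-order terms $m\int\uep\psi\nu^\varepsilon\to m\e{\boldsymbol\mu/\boldsymbol\lambda}\int u^*\psi$ and $\int f\psi\nu^\varepsilon\to\e{\boldsymbol\mu/\boldsymbol\lambda}\int f\psi$, by strong convergence of $\uep$ combined with weak-$*$ convergence of $\nu^\varepsilon$.
\item The bilinear term $\iint(\uep(y)-\uep(x))(\psi(y)-\psi(x))K\muep(x)\muep(y)$ needs weak $\times$ weak convergence. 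The plan is to write $\muep(x)\muep(y)=\e{\boldsymbol\mu}^2+r^\varepsilon(x,y)$. We then show that the $r^\varepsilon$ contribution vanishes by the same diagonal-cutoff decomposition: off the diagonal we approximate $\uep$ strongly in $L^2$, so the integrand becomes strongly convergent times $r^\varepsilon\rightharpoonup0$, and near the diagonal Cauchy--Schwarz with the uniform $H^{\alpha/2}$ bound gives a contribution $O(\delta^{(2-\alpha)/2})$.
\end{enumerate}
Dividing by $\e{\boldsymbol\mu/\boldsymbol\lambda}$ yields exactly \eqref{eq:definition of weak solution L0 product form} with $\overline\Lambda$ given by \eqref{eq:lambda bar for random product}. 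By density this holds for all $v\in\hnorm$, and uniqueness of $u^0$ gives convergence of the whole family. If the periodic proof instead uses $\Gamma$-liminf/limsup of $E^\varepsilon$ with recovery sequence $u$ itself, the same two ergodic limits (i) and (ii) are exactly what is needed, and the result is unchanged.

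The main obstacle is the off-diagonal step in (ii) for the non-convolution kernel under \eqref{B1}--\eqref{B3}. We have no pointwise control of $K$, so showing that $(\psi(y)-\psi(x))K(x,y)\mathbf 1_{\abs{x-y}\ge\delta}$ times an $L^2$-strongly convergent factor is a fixed $L^1$ function, which can be tested against $\muep\otimes\muep$, requires care. We will first handle the far field $\abs{x-y}\ge R$ by \eqref{B2}, which makes it small uniformly in $\varepsilon$, and then on the compact annulus $\delta\le\abs{x-y}\le R$ use \eqref{B2} to get $K\in L^1$ locally. This makes the tensor-product weak-$*$ limit applicable, since tensor products of weakly-$*$ convergent $L^\infty$ sequences converge weakly-$*$ on product sets.
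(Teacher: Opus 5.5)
Your proposal is correct, but it takes a different route from the paper.

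\textbf{The paper's route.} The paper proves Theorem \ref{thm:random product structure} by rerunning the $\Gamma$-convergence proof of Theorem \ref{thm:periodic product structure}. Lemmas \ref{lemma:periodic weak convergence} and \ref{lemma:bounde domain periodic convergence} are replaced by their ergodic versions, Remarks \ref{remark:random weak convergence} and \ref{remark:bounded domain random convergence}.
\begin{itemize}
\item The $\Gamma$-$\liminf$ restricts to $G_1^\delta$, where $K\le\delta^{-(d+\alpha)}$ and $\abs{x}+\abs{y}<1/\delta$, and discards the nonnegative remainder.
\item The $\Gamma$-$\limsup$ uses the constant recovery sequence.
\item Braides' theorem gives convergence of minimizers.
\item Strong $L^2$ convergence comes by contradiction from the minimality of $\uep$ against $\psi_n\uep$.
\end{itemize}

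\textbf{Your route.} You pass to the limit in the Euler--Lagrange equation with a fixed $\psi\in\ccinfty$. This is essentially the compactness method of Section 5, adapted to the weight $\nuep$. You split by distance to the diagonal rather than by level sets of $K$, and you get strong convergence from tightness.

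Your split uses the tail bound \eqref{A3}/\eqref{B2} quantitatively, in two ways:
\begin{itemize}
\item dyadically, $\sup_x\int_{\abs{x-y}<\delta}\abs{x-y}^2K(x,y)\rmd y\lesssim\delta^{2-\alpha}$;
\item directly, $\sup_x\int_{\abs{x-y}\ge\delta}K(x,y)\rmd y\le C_0\delta^{-\alpha}$.
\end{itemize}
The paper's split needs only that $K$ is finite a.e., because it reduces to a bounded set on which $K$ is bounded. The tensor-product weak-$*$ fact you use is exactly what Lemma \ref{lemma:bounde domain periodic convergence} proves. In your setting it follows deterministically from the one-parameter ergodic limits of $\muep$, so a single null set suffices.

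What your route buys is a linear identification argument, with no minimizer--solution equivalence and no $\Gamma$-machinery. The paper's route keeps the variational structure, so convergence of the minimal energies comes for free.

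\textbf{Points to tighten.}

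First, tightness in $\ard$ does not follow from the a priori bound alone. The paper's Section 4 does not give a uniform tail estimate either; it gets strong convergence from minimality. You can prove tightness by testing the equation with $\eta_R^2\uep$, where $\eta_R=0$ on $B_R$, $\eta_R=1$ off $B_{2R}$, and $\abs{\eta_R(x)-\eta_R(y)}\lesssim\min\{1,\abs{x-y}/R\}$. Use the identity $(a-b)(\eta_a^2a-\eta_b^2b)=(\eta_aa-\eta_bb)^2-(\eta_a-\eta_b)^2ab$ together with $\sup_x\int\min\{1,\abs{x-y}^2/R^2\}K(x,y)\rmd y\lesssim R^{-\alpha}$. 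This gives $\norm{\eta_R\uep}_{\ard}^2\lesssim R^{-\alpha}\norm{f}_{\ard}^2+\norm{f}_{L^2(\rd\setminus B_R)}^2$, uniformly in $\varepsilon$.

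Second, the ``main obstacle'' you flag is not one, and no far-field/annulus splitting is needed. Set $w^\varepsilon=\uep-u^*$. Cauchy--Schwarz and the row bound above give $\iint_{\abs{x-y}\ge\delta}\abs{w^\varepsilon(y)-w^\varepsilon(x)}\abs{\psi(y)-\psi(x)}K(x,y)\rmd x\rmd y\le 2C_0^{1/2}\delta^{-\alpha/2}\norm{w^\varepsilon}_{\ard}(\psi,\psi)_K^{1/2}$. So the off-diagonal integrand converges in $L^1(\rd\times\rd)$ as soon as $\uep\to u^*$ in $\ard$.

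Finally, drop the suggestion of test functions $\phi/\nuep$. Since $\nuep$ is merely measurable, these are not in $\hnorm$.
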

\subsection{Homogenization results for symmetric-type coefficients}
Next We consider the case where the coefficient $\blv$ is of periodic symmetric structure. Precisely, we assume
\begin{equation}\label{eq:s1}
	\Lambda^{\varepsilon}(x,y)=\Lambda\left(x,y,x/\varepsilon,y/\varepsilon\right) 
\end{equation} 
with a function $\Lambda(x,y,\xi,\eta)$ that is continuous in $(x,y)$, measurable in $(\xi,\eta)$ for all $(x,y)$, and satisfies periodicity:
\begin{equation}\label{eq:s2}
	\Lambda(x,y,\xi+z,\eta)=\Lambda(x,y,\xi,\eta), \;\;  \Lambda(x,y,\xi,\eta+z)=\Lambda(x,y,\xi,\eta)
\end{equation}
for all $x, y,\xi, \eta\in\rd$ and $z\in\zd$. Moreover, $\Lambda(x,y,\xi,\eta)$ satisfies the following conditions:
\begin{align}\label{eq:s3}
	\begin{cases}
		\Lambda(x,y,\xi,\eta)=\Lambda(x,y,\eta,\xi),\\
		\gamma^{-1}\leq \Lambda(x,y,\xi,\eta)\leq \gamma,
	\end{cases}
\end{align}
for some constant $\gamma\geq1$ and all $x,y,\xi,\eta\in \mathbb{R}^{d}$. 

We first provide the corresponding definition of  the weak solution of \eqref{eq:uep satisfies equation}  for periodic $\blv$ defined in \eqref{eq:s1} and the definition for random $\blv$ defined in \eqref{form} below can be formulated analogously. 
\begin{definition}
  For  $m>0$ and  $f\in\ard$, we say that $\uep\in\hnorm$ is a (weak) solution of \eqref{eq:uep satisfies equation},
if, for all $v\in\hnorm$, 
\begin{equation*}
	m\intrd u^{\varepsilon}(x)v(x)\rmd x-\iintrdd \paren{u^{\varepsilon}(y)-u^{\varepsilon}(x)}K(x,y)\Lambda^{\varepsilon}(x,y)v(x)\rmd y\rmd x
	=\intrd f(x)v(x)\rmd x.
\end{equation*} 
Formally, this can be written as
\begin{equation}\label{eq:weak solution def2}
	m\left(u^{\varepsilon},v\right)-\paren{\lvar u^{\varepsilon}, v}=(f,v).
\end{equation}
\end{definition}

We proceed to present the definition of the limit operator $L^0$:
\begin{equation}\label{eq:L0 for random sym structure}
	L^0u(x)=\intrd \overline{\Lambda}(x,y)\paren{u(y)-u(x)}K(x,y)\rmd y,
\end{equation}
where
\begin{equation}\label{coe}
	\overline{\Lambda}(x,y)=\iint_{[0,1]^{d}\times[0,1]^{d}}\Lambda(x,y,\xi,\eta)\rmd\xi \rmd \eta.
\end{equation}
\begin{definition}
  For  $m>0$ and  $f\in\ard$,  we say that $u^0\in\hnorm$ is a (weak) solution of  \eqref{eq:u0 satisfies equation}, if, for all $v\in\hnorm$, 
\begin{equation*}
	m\intrd u^0(x)v(x)\rmd x-\iintrdd \paren{u^{0}(y)-u^{0}(x)}K(x,y)\overline{\Lambda}(x,y)v(x)\rmd y\rmd x
	=\intrd f(x)v(x)\rmd x.
\end{equation*}  
Formally, this can be written as
\begin{equation*}
	m(u^0,v)-\paren{L^0 u^0, v}=(f,v).
\end{equation*}  
\end{definition}

We are now in a position to state the homogenization result for the periodic symmetric coefficient $\blv$. 
\begin{theorem}\label{thm:periodic sym structure}
	Assume that \eqref{A1}--\eqref{A4} or \eqref{B1}--\eqref{B3} hold. Let $\blv$ satisfy \eqref{eq:s1}--\eqref{eq:s3}. For a given constant $m>0$ and each $f\in\ard$, let $\uep$ be the solution of \eqref{eq:uep satisfies equation}
	and $u^0$ be the solution of \eqref{eq:u0 satisfies equation} with the limit operator $L^0$ defined in \eqref{eq:L0 for random sym structure}. Then, as $\varepsilon\rightarrow 0$, $\uep$ converges to $u^0$ strongly in $\ard$ and weakly in  $\hnorm$  almost surely.
\end{theorem}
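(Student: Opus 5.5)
We prove Theorem \ref{thm:periodic sym structure} by a compactness argument. The plan is to obtain uniform bounds, extract a convergent subsequence, pass to the limit in the weak formulation, and conclude by uniqueness.

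\textbf{Step 1: uniform bounds.} Take $v=\uep$ in \eqref{eq:weak solution def2}. By the symmetry of $K$ and of $\blv$ (the latter from \eqref{eq:s3}), the bilinear term equals
\[
\tfrac12\iintrdd (\uep(y)-\uep(x))^2K(x,y)\blv(x,y)\rmd x\rmd y .
\]
The lower bound $\blv\geq\gamma^{-1}$ and the coercivity estimate of Lemma \ref{lemma:equivalent norm} then give $\norm{\uep}_{\hnorm}\le C\norm{f}_{\ard}$ uniformly in $\varepsilon$. The fractional Sobolev compact embedding on bounded sets, together with a tail estimate, yields a subsequence with $\uepi\rightharpoonup u$ weakly in $\hnorm$ and strongly in $\ardloc$.

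For the tail estimate, test with $\uep\phi_R$, where $\phi_R$ is a cutoff vanishing on $B_R$ and equal to $1$ outside $B_{2R}$. The commutator term is bounded using \eqref{A3}/\eqref{B2} and the bound on $\nabla\phi_R$. This gives $\int_{|x|>2R}|\uep|^2\to0$ as $R\to\infty$ uniformly in $\varepsilon$, so the convergence is strong in $\ard$.

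\textbf{Step 2: passing to the limit.} Fix $v\in\ccinfty$; density extends the identity to all of $\hnorm$. We must show
\[
\iint (\uepi(y)-\uepi(x))(v(y)-v(x))K\Lambda^{\varepsilon_i}\to\iint (u(y)-u(x))(v(y)-v(x))K\overline\Lambda .
\]
Split $\rd\times\rd$ into three regions.

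\emph{Near-diagonal region} $\{|x-y|<\delta\}$. By Cauchy--Schwarz, its contribution is at most
\[
C\,\norm{\uep}_{\hnorm}\Big(\iint_{|x-y|<\delta}|v(x)-v(y)|^2K\Big)^{1/2}.
\]
Since $|v(x)-v(y)|\le C|x-y|$, the last integral is controlled by $\int_{B_\delta(x)}|x-y|^2K\rmd y\lesssim\delta^{2-\alpha}$. This bound follows from \eqref{A3}/\eqref{B2} by a dyadic summation. Hence this contribution is small, uniformly in $\varepsilon$.

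\emph{Far region} $\{|x|>R \text{ or } |y|>R\}$. It is small by the tail bounds on $v$ and on $\uep$, together with \eqref{A3}/\eqref{B2}.

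\emph{Remaining region} $D_{\delta,R}=\{\delta\le|x-y|,\ |x|,|y|\le R\}$. Here $K$ is integrable, with $\int_{D_{\delta,R}}K<\infty$ again by \eqref{A3}/\eqref{B2}. Write the integrand as $G_\varepsilon\Lambda^\varepsilon K$, where
\[
G_\varepsilon(x,y)=(\uep(y)-\uep(x))(v(y)-v(x)).
\]
Then $G_\varepsilon\to G_0$ strongly in $L^2(D_{\delta,R},K\rmd x\rmd y)$, by the strong $L^2$ convergence of $\uep$ and the boundedness of $K$ on the relevant slices. It remains to use the weak-$*$ convergence $\Lambda(x,y,x/\varepsilon,y/\varepsilon)\rightharpoonup\overline\Lambda(x,y)$ in $L^\infty$. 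This holds because $\Lambda$ is continuous in $(x,y)$ and $\zd\times\zd$-periodic in $(\xi,\eta)$, so the classical oscillation lemma applies, for instance by approximating $\Lambda$ by piecewise-constant-in-$(x,y)$ functions. A strong$\times$weak product then converges once we test against the fixed function $G_0K\in L^1$.

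Letting $\varepsilon\to0$, then $\delta\to0$ and $R\to\infty$, shows that $u$ is a weak solution of \eqref{eq:u0 satisfies equation}. Uniqueness from Section 3 gives $u=u^0$, so the whole family converges.

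\textbf{Main obstacle.} The delicate point is the middle region. Periodic weak convergence holds only jointly in $(x,y)$, and here $K$ may be merely measurable, as in \eqref{B3}-type kernels. To handle this, approximate $G_0K$ in $L^1(D_{\delta,R})$ by finite sums of products $a(x)b(y)$ of step functions on cubes. On such products the joint oscillation lemma gives convergence to $\overline\Lambda$ directly. The errors are controlled because $|\Lambda|\le\gamma$ and $G_\varepsilon\to G_0$ strongly.
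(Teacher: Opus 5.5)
Your proposal is correct. Its skeleton is the paper's compactness argument: a uniform $H^{\alpha/2}$ bound, compactness, passing to the limit in the weak form against $v\in C_c^\infty(\mathbb{R}^d)$ through a three-region splitting, weak convergence $\Lambda^\varepsilon\to\overline{\Lambda}$, and uniqueness. Two ingredients, however, differ genuinely from the paper.

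\textbf{The splitting.} The paper splits $\mathbb{R}^d\times\mathbb{R}^d$ by level sets of $K$, with $G_1^\delta=\{K\le\delta^{-(d+\alpha)},\ |x|+|y|<1/\delta\}$ and its complements.
\begin{itemize}
\item On $G_1^\delta$ the kernel is bounded, so weak $L^2(G_1^\delta)$ convergence of $\Lambda^\varepsilon$ suffices.
\item Smallness of $[\varphi]$ on $G_2^\delta\cup G_3^\delta$ comes from dominated convergence, with no rates.
\end{itemize}
Your geometric splitting instead relies on the dyadic consequences $\int_{B_\delta(x)}|x-y|^2K\,\mathrm{d}y\lesssim\delta^{2-\alpha}$ and $\int_{|x-y|\ge\delta}K\,\mathrm{d}y\lesssim\delta^{-\alpha}$ of \eqref{A3}/\eqref{B2}. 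This gives explicit rates that are uniform in $\varepsilon$. The price is that $K$ is only integrable on $D_{\delta,R}$, not bounded. You handle this correctly: you test the weak-$*$ limit of $\Lambda^\varepsilon$ against $G_0K\in L^1$, and you get $G_\varepsilon\to G_0$ from $\sup_x\int_{|x-y|\ge\delta}K\,\mathrm{d}y<\infty$.

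\textbf{Strong $L^2$ convergence.} The paper obtains strong $L^2(\mathbb{R}^d)$ convergence only at the end, by contradiction. It combines $\liminf(-L^\varepsilon u^\varepsilon,u^\varepsilon)\ge(-L^0u^0,u^0)$ with $m\|u^\varepsilon\|^2-(L^\varepsilon u^\varepsilon,u^\varepsilon)=(f,u^\varepsilon)\to(f,u^0)$. Your test function $u^\varepsilon\phi_R$ gives uniform tightness directly. The commutator is bounded by $\lesssim R^{-\alpha/2}\|u^\varepsilon\|_{L^2}(u^\varepsilon,u^\varepsilon)_K^{1/2}$, and the source term by $\|f\|_{L^2(|x|>R)}\|u^\varepsilon\|_{L^2}$. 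What each route buys:
\begin{itemize}
\item Yours yields global strong compactness before the limit is identified, and needs no energy convergence for $u^0$.
\item The paper's uses no tail estimate on $K$ at all, only the variational structure.
\end{itemize}

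\textbf{Details to make precise.}
\begin{itemize}
\item $\Lambda$ is continuous in $(x,y)$ only for each fixed $(\xi,\eta)$, not uniformly. In your piecewise-constant approximation, control the error by the mean of the measurable periodic modulus $\omega_h(\xi,\eta)=\sup_{|z-z'|\le h}|\Lambda(z,\xi,\eta)-\Lambda(z',\xi,\eta)|$, with $z,z'$ ranging over a compact set. This mean tends to $0$ by dominated convergence, and it is exactly what the Zhikov lemma cited by the paper packages.
\item As written, \eqref{eq:s3} gives symmetry only in $(\xi,\eta)$. Symmetry of $\Lambda^\varepsilon$ really requires $\Lambda(x,y,\xi,\eta)=\Lambda(y,x,\eta,\xi)$. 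The paper uses this implicitly too, so this is a shared assumption rather than your gap.
\end{itemize}
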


We proceed to study the homogenization problem for \eqref{eq,main equation} in random symmetric environment. Beyond the assumptions on the probablity space $(\Omega,\mathcal{F},\mathbb{P})$ made in the random product setting, we additionally assume some topological
structure. We assume that $\Omega$ is a metric compact space, that $\mathcal{F}$ is the
Borel $\sigma$-algebra of $\Omega$, and that the group $\tau_{\cdot}$ is continuous. Moreover, we assume that $\blv$ has the following form:
\begin{equation}\label{form}
	\blv(x,y)=\Lambda\left(x,y,\tau_{\frac{x}{\varepsilon}}\omega,\tau_{\frac{y}{\varepsilon}}\omega\right),
\end{equation} 
where $\Lambda\left(x,y,\omega_{1},\omega_{2}\right)$ is a continuous function on $\rd\times\rd\times\Omega\times\Omega$ and satisfies the
following symmetry condition:
\begin{equation}\label{symm}
	\Lambda\left(x,y,\omega_{1},\omega_{2}\right)=\Lambda\left(y,x,\omega_{2},\omega_{1}\right).
\end{equation} 
In addition, $\Lambda$ is  uniformly positive and bounded in the sense that almost surely, for all $x,y\in\rd$,
\begin{equation}\label{stationary}
	 1/\gamma\leq{\Lambda}\leq\gamma.
\end{equation} 

The following result shows that the operator $\lvar$ still admits homogenization for random $\blv$ with symmetric structure. 
\begin{theorem}\label{thm:random sym structure}
	Under the assumptions of Theorem \ref{thm:periodic sym structure} with \eqref{eq:s1}--\eqref{eq:s3} replaced by \eqref{form}--\eqref{stationary} and with \eqref{coe} replaced by
	\begin{equation}\label{eq:lambda bar}
		\overline{\Lambda}(x,y)=\iint_{\Omega\times\Omega}\Lambda(x,y,\omega_{1},\omega_{2})
\rmd{\mathbb{P}}(\omega_{1})\rmd{\mathbb{P}}(\omega_{2}),
	\end{equation}
	the conclusion of Theorem \ref{thm:periodic sym structure} still holds.
\end{theorem}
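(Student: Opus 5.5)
The proof will follow the compactness scheme used for Theorem \ref{thm:periodic sym structure}, with the periodic averaging step replaced by an ergodic theorem. Since $\Lambda$ is continuous on $\rd\times\rd\times\Omega\times\Omega$ and $\Omega$ is compact, this ergodic statement can be made uniform.

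\emph{Step 1 (a priori bounds).} Test the weak formulation with $v=\uep$. The symmetry \eqref{symm} lets the nonlocal term be rewritten as $\frac12\iint (\uep(y)-\uep(x))^2K\blv$. Combining $1/\gamma\le\Lambda\le\gamma$ with the coercivity estimate for $(u,u)_K$ (Lemma \ref{lemma:equivalent norm}) gives
\[
\|\uep\|_{\hnorm}\le C\|f\|_{\ard},
\]
uniformly in $\varepsilon$ and $\omega$. The compact embedding $H^{\alpha/2}\hookrightarrow L^2_{\rm loc}$ and the tail bound from \eqref{A3}/\eqref{B2} (tested against a cutoff) then give a subsequence with $\uep\to u$ strongly in $\ard$ and weakly in $\hnorm$.

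\emph{Step 2 (passing to the limit).} By density, fix $v\in\ccinfty$, and write the bilinear term as
\[
\frac12\iint (\uep(y)-\uep(x))(v(y)-v(x))K(x,y)\Lambda\bigl(x,y,\tau_{x/\varepsilon}\omega,\tau_{y/\varepsilon}\omega\bigr)\rmd x\rmd y.
\]
Split $\rd\times\rd$ into three regions, using the decomposition adapted to general kernels developed earlier:
\begin{enumerate}[label=(\roman*)]
\item the near-diagonal set $\{|x-y|<\delta\}$;
\item the far set $\{|x-y|>R\}\cup\{x,y\notin B_R\}$;
\item the compact remainder $Q_{\delta,R}$.
\end{enumerate}
On (i), Cauchy--Schwarz together with $\iint_{|x-y|<\delta}|v(x)-v(y)|^2K\to0$ as $\delta\to0$ (from \eqref{A2}, or from \eqref{B2} via dyadic annuli) makes the contribution small uniformly in $\varepsilon$. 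On (ii), the contribution is small by \eqref{A3}/\eqref{B2} and the support of $v$. On $Q_{\delta,R}$, use that $(\uep(y)-\uep(x))K^{1/2}$ converges weakly in $L^2(Q_{\delta,R})$, while $K^{1/2}(v(y)-v(x))$ is a fixed $L^2$ function.

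It therefore remains to show, almost surely,
\[
\Lambda\bigl(x,y,\tau_{x/\varepsilon}\omega,\tau_{y/\varepsilon}\omega\bigr)\rightharpoonup\overline{\Lambda}(x,y)\quad\text{weakly-}*\text{ in }L^\infty(Q_{\delta,R}).
\]
Because the other factor converges only weakly, weak-$*$ convergence of the coefficient is not enough to pass to the limit in the product. To handle this, I will reduce to products with one strongly convergent factor. Approximate $K^{1/2}(v(y)-v(x))\mathbf 1_{Q}$ in $L^2$ by finite sums of $\phi(x)\psi(y)$. Approximate $\Lambda$ uniformly on $\overline{B_R}^2\times\Omega^2$ (Stone--Weierstrass) by finite sums $\sum a_k(x,y)g_k(\omega_1)h_k(\omega_2)$ with continuous $g_k,h_k$. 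The required statement then becomes the two-variable ergodic limit
\[
\iint_{Q}\Phi(x,y)g\bigl(\tau_{x/\varepsilon}\omega\bigr)h\bigl(\tau_{y/\varepsilon}\omega\bigr)\rmd x\rmd y\;\longrightarrow\;\mathbb E[g]\,\mathbb E[h]\iint_Q\Phi .
\]
This limit holds almost surely for $\Phi=\phi\otimes\psi$ by Birkhoff's ergodic theorem applied separately in $x$ and $y$.

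The main obstacle is the strong-times-weak pairing on $Q_{\delta,R}$. The difficulty is that $\Lambda^\varepsilon$ oscillates jointly in both variables while $\uep$ converges only weakly. The argument is exactly the periodic one, since the limit involves no correctors, but it needs a strong compactness that weak convergence alone does not supply. I would obtain it by showing $\uep(y)-\uep(x)\to u(y)-u(x)$ strongly in $L^2(Q_{\delta,R},K)$. On $Q_{\delta,R}$ the kernel is bounded or integrable away from the diagonal, by \eqref{B2} and the $\delta$-cutoff. After truncating $K$ to a bounded $K_M$ (with error controlled uniformly in $\varepsilon$), the strong $L^2_{\rm loc}$ convergence of $\uep$ from Step 1 gives this strong convergence. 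The pairing then converges, with the countable dense family making the exceptional null set independent of the test functions. Hence $u$ solves $(m-L^0)u=f$ with $\overline\Lambda$ given by \eqref{eq:lambda bar}. Uniqueness of the limit solution shows that the whole family converges almost surely, and the strong $L^2$ convergence from Step 1 completes the claim.
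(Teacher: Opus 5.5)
Your proposal is correct. Its skeleton matches the paper's proof of Theorem~\ref{thm:periodic sym structure}, with Zhikov's lemma replaced by \cite[Lemma 3.1]{MR4000213}: a uniform $H^{\alpha/2}$ bound, local compactness, a split of $\mathbb{R}^d\times\mathbb{R}^d$ into a part where $\iint|v(x)-v(y)|^2K$ is small and a bounded part where $K$ is truncated, strong $L^2_{\mathrm{loc}}$ convergence of $u^\varepsilon$ paired with weak-$*$ convergence of $\Lambda^\varepsilon$ on that bounded part, and uniqueness for the limit problem. Two sub-steps are done differently.

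\emph{Ergodic convergence of the coefficient.} You prove the almost sure weak-$*$ convergence of $\Lambda(x,y,\tau_{x/\varepsilon}\omega,\tau_{y/\varepsilon}\omega)$ to $\overline{\Lambda}$ yourself: Stone--Weierstrass on $\overline{B}\times\overline{B}\times\Omega\times\Omega$, then Birkhoff for separable products over a countable dense family. The paper simply imports this result. Your version shows exactly where compactness of $\Omega$ and joint continuity of $\Lambda$ are used.

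\emph{Global strong convergence.} You get strong $L^2(\mathbb{R}^d)$ compactness up front from uniform tightness. The paper obtains global strong convergence only at the end, by contradiction: it proves $\liminf_{\varepsilon\to0}(-L^\varepsilon u^\varepsilon,u^\varepsilon)\ge(-L^0u^0,u^0)$ and compares this with the energy identity $m\|u^\varepsilon\|^2+(-L^\varepsilon u^\varepsilon,u^\varepsilon)=(f,u^\varepsilon)$. Your route works, but you only gesture at it. Spelled out, it goes as follows.
\begin{enumerate}
\item Test the equation with $\chi_R u^\varepsilon$, where $\chi_R=\chi(\cdot/R)$ vanishes on $B_R$.
\item Drop the nonnegative term $\frac12\iint\chi_R(x)(u^\varepsilon(x)-u^\varepsilon(y))^2K\Lambda^\varepsilon$.
\item Bound the commutator using $\sup_y\int|\chi_R(x)-\chi_R(y)|^2K(x,y)\,\mathrm{d}x\lesssim R^{-\alpha}$. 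This follows from \eqref{B2} (or \eqref{A3}) by summing over dyadic annuli.
\end{enumerate}
The result is $m\int\chi_R|u^\varepsilon|^2\le\|f\|_{L^2(\mathbb{R}^d\setminus B_R)}\|u^\varepsilon\|_{L^2}+CR^{-\alpha/2}\|f\|_{L^2}^2$, uniformly in $\varepsilon$. So your approach pays with a commutator estimate for a general kernel. The paper's pays with an extra lower-semicontinuity argument, which reuses the coefficient convergence.

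\emph{One correction.} Under \eqref{B2} or \eqref{A3}, the kernel on $Q_{\delta,R}$ is only integrable, not bounded. The truncation $K_M$ (the paper's $G_2^\delta$) is therefore essential, not optional. Your first idea of pairing the merely weakly convergent $(u^\varepsilon(y)-u^\varepsilon(x))K^{1/2}$ with $\Lambda^\varepsilon$ does not work. It must be replaced by the strong convergence you establish afterwards, as you eventually do.
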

Note that we will prove Theorems \ref{thm:periodic sym structure} and \ref{thm:random sym structure} via a compactness argument, which differs from the $\Gamma$-convergence method employed for Theorems \ref{thm:periodic product structure} and \ref{thm:random product structure}. Nevertheless, these two methods are  compatible, and each can be adapted to derive the results proved by the other.

\subsection{Extension to nonlinear nonlocal equations}
Finally, the homogenization framework developed in the previous section can be extended to a general class of nonlinear nonlocal equations. This extension demonstrates the robustness of our approach. In this section, we state the nonlinear analogues of our main results. Specifically, motivated by \cite{MR3339179, MR4693935}, we study the following nonlocal operator:
\begin{equation}\label{eq:nonlinear operator}
 	L^{\varepsilon}_{\Phi}u(x)=\intrd K(x,y)\Phi'\left(u(y)-u(x)\right)\Lambda^{\varepsilon}(x,y)\rmd y,\quad x\in\mathbb{R}^{d}.
 \end{equation}
 The kernel $K$ : $\mathbb{R}^{d}\times\mathbb{R}^{d}\rightarrow\r$ is assumed to be measurable and symmetric,  satisfying
 \begin{equation}\label{1}
 	\frac{1}{\Upsilon_1|x-y|^{d+\alpha}}\leq K(x,y)\leq \frac{\Upsilon_1}{|x-y|^{d+\alpha}}
 \end{equation}
  for all $x$, $y\in\rd$, $x\neq y$, where $\Upsilon_1\geq 1$ and $\alpha\in(0,p)$ with $p>1$. 
  The function $\Phi$ : $\r\to\r$ is assumed to be strictly convex and continuously differentiable with the odd derivation function $\Phi'$, satisfying
  \begin{equation}\label{eq:Phi condition}
    \Upsilon_2^{-1}\abs{t}^p\leq\Phi(t)\leq\Upsilon_2\abs{t}^p,\quad
    \Phi'(t)t\geq\Upsilon_{2}^{-1}\abs{t}^p,
  \end{equation}
  and
  \begin{equation}\label{eq:Phi derivation condition}
  \abs{\Phi'(s)-\Phi'(t)}\leq 
	\begin{cases}
		C(p)|s-t|^{p-1} &\quad\text{if}\hsp 1<p\leq2, \\
		C(p)\left(|s|+|t|\right)^{p-2}|s-t| &\quad \text{if}\hsp p>2
	\end{cases}
   \end{equation}
  for all $s$, $t\in\r$ and some fixed constants  $\Upsilon_2\geq 1$, $C(p)> 0$.
   Moreover, we assume that  
  $\Lambda^{\varepsilon}(x,y)$ satisfies the assumptions on it stated in any one of Theorems \ref{thm:periodic product structure}--\ref{thm:random product structure} and Theorems \ref{thm:periodic sym structure}--\ref{thm:random sym structure}. 
\begin{remark}
  It is straightforward to verify that the function $\Phi(t)=\abs{t}^{p}$ satisfies all the assumptions imposed on $\Phi$ above. For the verification of assumption \eqref{eq:Phi derivation condition}, one may refer to \cite[pp.98]{MR3931688} for $1<p\leq 2$, and \cite[Lemma 2.2]{MR847306}(see also \cite[Lemma 2.2]{MR4919132}) for $p>2$.
\end{remark}

 For the nonlinear nonlocal equation
 \begin{equation}\label{52}
 	-L^{\varepsilon}_{\Phi}u^{\varepsilon}+m\abs{\uep}^{p-2}\uep=f,
 \end{equation}
 where $m>0$, $p>1$, and $f\in L^{p'}\left(\mathbb{R}^{d}\right)$ with $p'=\frac{p}{p-1}$, we adopt  similar definitions of weak solutions as in the linear case (see \eqref{eq:weak solution def1} and \eqref{eq:weak solution def2} for $\blv(x,y)$ with product and symmetric structure, respectively), with the linear operator $\lvar$ replaced by the nonlinear operator $-L^{\varepsilon}_{\Phi}u^{\varepsilon}$ and the linear lower-order term $\uep$
 replaced by the nonlinear lower-order term $\abs{\uep}^{p-2}\uep$.

 We proceed to introduce the limit operator
\begin{equation*}
	L_{\Phi}^{0}u(x)=\int_{\mathbb R^d} K(x,y)\Phi'\left(u(y)-u(x)\right)\overline{\Lambda}(x,y)\rmd y,\quad x\in\mathbb{R}^{d},
\end{equation*}
and the limit equation
\begin{equation}\label{54}
	-L^{0}_{\Phi}u^0+m\abs{u^0}^{p-2}u^0=f.
\end{equation}

The following theorem presents a homogenization result for the nonlinear nonlocal problem \eqref{52} with periodic coefficient $\blv(x,y)$.
 \begin{theorem}[Periodic coefficients]\label{thm1}
 	 Let $m>0$, $p>1$, $\alpha\in(0,p)$, $p'=\frac{p}{p-1}$, and $f\in L^{p'}\left(\mathbb{R}^{d}\right)$. Assume that $u^{\varepsilon}$ be a weak solution of \eqref{52} with coefficient $\blv(x,y)$ defined in \eqref{eq:perodic product}--\eqref{eq:lambda mu bound} or \eqref{eq:s1}--\eqref{eq:s3},
 	and that $u^{0}$ be a weak solution of \eqref{54} with the corresponding homogenized coefficient $\overline{\Lambda}(x,y)$ defined in \eqref{eq:lambda bar for periodic product} or \eqref{coe}.
 	Then, as $\varepsilon\rightarrow0$, $u^{\varepsilon}$ converges to $u^{0}$ strongly in $L^{p}\left(\mathbb{R}^{d}\right)$ and weakly in  $W^{\alpha/p,p}\left(\mathbb{R}^{d}\right)$. 
 \end{theorem}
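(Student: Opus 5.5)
We will argue by compactness combined with a monotonicity (Minty) argument, treating the product-type and symmetric-type coefficients in parallel.

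\textbf{Step 1: uniform bounds.} Test the weak formulation of \eqref{52} with $v=\uep$. In the product case, take the weighted pairing against $\nuep$. Symmetrizing the double integral, using that $\Phi'$ is odd, and applying $\Phi'(t)t\geq\Upsilon_2^{-1}\abs{t}^p$ together with \eqref{1} and the bounds on $\blv$, we obtain
\begin{equation*}
[\uep]^p_{W^{\alpha/p,p}}+m\norm{\uep}^p_{L^p}\leq C\norm{f}_{L^{p'}}\norm{\uep}_{L^p}.
\end{equation*}
This gives a bound for $\uep$ in \wap{} that is uniform in $\varepsilon$.

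\textbf{Step 2: compactness.} By reflexivity and the compact embedding $W^{\alpha/p,p}\hookrightarrow L^p_{\mathrm{loc}}$, a subsequence converges weakly in \wap{} and strongly in $L^p_{\mathrm{loc}}$ to some $u$. To upgrade this to strong convergence in $L^p(\rd)$ we need uniform tail control. We obtain it by testing the equation with $\uep\psi_R$, where $\psi_R$ is a cutoff equal to $1$ outside $B_{2R}$ and $0$ on $B_R$. The commutator term is controlled by $R^{-\alpha}$ via \eqref{1}, which yields $\int_{|x|>2R}|\uep|^p\to0$ uniformly in $\varepsilon$. The flux $\xi^\varepsilon(x,y):=\Phi'(\uep(y)-\uep(x))\abs{x-y}^{-(d+\alpha)/p'}$ is bounded in $L^{p'}(\rd\times\rd)$, so along a further subsequence $\xi^\varepsilon\rightharpoonup\xi$ weakly.

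\textbf{Step 3: identifying the limit.} Fix $v\in\ccinfty$ and consider the term
\begin{equation*}
\iint \Phi'(\uep(y)-\uep(x))(v(y)-v(x))K\blv.
\end{equation*}
Write the integrand as $\xi^\varepsilon\cdot(v(y)-v(x))K|x-y|^{(d+\alpha)/p'}\blv$. The coefficient $\blv$ converges only weakly-$*$: to $\overline\Lambda$ in the symmetric case, and in the product case $\lamep,\muep$ converge separately to their means. A product of two weakly convergent sequences does not converge to the product of the limits, and this is the main obstacle. We plan to handle it as follows.

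First, in the symmetric case, split $\rd\times\rd$ into a near-diagonal region $\{|x-y|<\delta\}$ and its complement. On the near-diagonal region the contribution is $O(\delta^{p-\alpha})$ uniformly in $\varepsilon$, by the $W^{\alpha/p,p}$ bound and the smoothness of $v$. Far from the diagonal, we would like $\Phi'(\uep(y)-\uep(x))$ to converge strongly in $L^{p'}_{\mathrm{loc}}$, which would follow from the strong $L^p_{\mathrm{loc}}$ convergence of $\uep$ and \eqref{eq:Phi derivation condition}. Strong convergence of this factor paired with weak-$*$ convergence of $\blv$ then gives the limit with $\overline\Lambda$. This is the decomposition strategy referenced for the linear case. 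Together with the tail control from Step 2, it identifies $u$ as a solution of \eqref{54}. By uniqueness, which follows from the strict monotonicity of $\Phi'$ and of $t\mapsto|t|^{p-2}t$, the whole family $\uep$ converges to $u=u^0$.

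Second, in the product case the weight $\nuep$ also appears in the lower-order term and the right-hand side. Here we would instead use the $\Gamma$-convergence route of Theorem \ref{thm:periodic product structure}. The weak solutions are minimizers of the strictly convex energies
\begin{equation*}
\frac12\iint\Phi(u(y)-u(x))K\muep(x)\muep(y)+\intrd\Big(\frac mp|u|^p-fu\Big)\nuep.
\end{equation*}
We take the $\Gamma$-limit of these energies with the same liminf/limsup constructions as in the linear case. Since the energies are equicoercive by Step 1, minimizers converge to the minimizer of the limit functional. Its Euler--Lagrange equation, after normalization, is \eqref{54} with $\overline\Lambda=\mean{\mu}^2/\mean{\mu/\lambda}$.

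Weak convergence in \wap{} and strong convergence in $L^p(\rd)$ then follow from Steps 1 and 2.
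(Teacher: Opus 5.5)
Your argument is correct. For symmetric coefficients, the identification of the limit is essentially the paper's Step 2. Under \eqref{1}, your cut $\{|x-y|<\delta\}$ plays the role of $G_2^\delta$. Three small points. The near-diagonal term is $O(\delta^{(p-\alpha)/p})$, not $O(\delta^{p-\alpha})$. The off-diagonal region is unbounded, so you must also discard $\{|x|+|y|\geq R\}$ (the paper's $G_3^\delta$); your $L^{p'}$ bound on $\xi^\varepsilon$ handles this by H\"older. The weak limit $\xi$ and the Minty trick are never actually used.

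The real difference is how you upgrade to strong convergence in $L^p(\mathbb{R}^d)$. The paper argues by contradiction. It proves the energy liminf \eqref{28} via the splitting $\Upsilon_1+\Upsilon_2+\Upsilon_3$ with a smooth approximant $u_\eta$ of $u^0$, and compares it with the identity obtained by testing with $u^\varepsilon$; this rules out a strict jump in $\|u^\varepsilon\|_{L^p}$. Your test function $u^\varepsilon\psi_R$ gives uniform tightness directly:
\begin{itemize}
\item the part containing $\psi_R(y)\Phi'(u^\varepsilon(y)-u^\varepsilon(x))(u^\varepsilon(y)-u^\varepsilon(x))$ is nonnegative;
\item the commutator is $\lesssim[u^\varepsilon]_{W^{\alpha/p,p}}^{p-1}\|u^\varepsilon\|_{L^p}R^{-\alpha/p}$, using $|\Phi'(t)|\lesssim|t|^{p-1}$ and $\alpha<p$.
\end{itemize}
Your route is shorter, quantitative in $R$, and unaffected by the bounded weight $\nu^\varepsilon$. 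It is close in spirit to the cutoff estimate in Step 3 of the proof of Theorem \ref{thm:periodic product structure}. The paper's route needs no commutator bound and reuses the identification machinery.

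For product coefficients, your $\Gamma$-convergence route also works. On $G_1^\delta$, $\Phi(v^\varepsilon(y)-v^\varepsilon(x))$ converges in $L^1$ against $\mu^\varepsilon(x)\mu^\varepsilon(y)\rightharpoonup\langle\mu\rangle^2$. The liminf of $\int|v^\varepsilon|^p\nu^\varepsilon$ follows from $|a|^p\geq|b|^p+p|b|^{p-2}b(a-b)$ instead of expanding squares. However, the obstacle you cite does not actually arise. In the weighted weak formulation, every oscillating factor ($\mu^\varepsilon(x)\mu^\varepsilon(y)$ or $\nu^\varepsilon$) is paired with a strongly convergent one: $\Phi'$ of the increment on bounded sets, or $|u^\varepsilon|^{p-2}u^\varepsilon$ in $L^{p'}_{\mathrm{loc}}$. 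So the compactness argument goes through unchanged, and dividing by $\langle\mu/\lambda\rangle$ gives \eqref{54} with $\overline{\Lambda}=\langle\mu\rangle^2/\langle\mu/\lambda\rangle$. This is presumably the ``straightforward adaptation'' the paper has in mind.
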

 Analogously, for the nonlinear nonlocal problem \eqref{52} with random coefficient $\blv(x,y)$, the corresponding homogenization result holds as stated below.
 \begin{theorem}[Random coefficients]\label{thm:nonlinear random}
   Under the assumptions of Theorem \ref{thm1}, let $u^{\varepsilon}$ be a weak solution of \eqref{52} with coefficient $\blv(x,y)$ defined in \eqref{eq:random product}--\eqref{eq:lambda mu random bound} or \eqref{form}--\eqref{stationary},
 	and let $u^{0}$ be a weak solution of \eqref{54} with the corresponding homogenized coefficient $\overline{\Lambda}(x,y)$ defined in \eqref{eq:lambda bar for random product} or \eqref{eq:lambda bar}.
 	Then, the conclusion of Theorem \ref{thm1} still holds in the almost sure sense. 
 \end{theorem}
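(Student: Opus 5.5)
The plan is to repeat the deterministic argument behind Theorem \ref{thm1} for a fixed realization $\omega$ in a set of full measure. The only periodic input is the weak convergence of the oscillating coefficients. Once that convergence is available almost surely from the ergodic theorem, the rest of the argument is unchanged.

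\emph{Choice of the full-measure set.}

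\begin{itemize}[label={}]
\item \emph{Product case.} By Birkhoff's ergodic theorem, applied along a countable dense family of test functions, almost surely
\begin{itemize}[label={}]
\item $\mu(\cdot/\varepsilon,\omega)\rightharpoonup \e{\boldsymbol\mu}$ weakly-$*$ in $L^\infty(\rd)$;
\item $(\mu/\lambda)(\cdot/\varepsilon,\omega)\rightharpoonup\e{\boldsymbol\mu/\boldsymbol\lambda}$ weakly-$*$ in $L^\infty(\rd)$;
\item $\mu^\varepsilon(x)\mu^\varepsilon(y)\rightharpoonup\e{\boldsymbol\mu}^2$ weakly-$*$ in $L^\infty(\rd\times\rd)$, by tensorization.
\end{itemize}
\item \emph{Symmetric case.} We use compactness of $\Omega$ and continuity of $\Lambda$ and $\tau$. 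Approximating $\Lambda(x,y,\cdot,\cdot)$ uniformly on compacts in $(x,y)$ by finite sums $\sum_i a_i(x,y)g_i(\omega_1)h_i(\omega_2)$, the ergodic theorem gives, almost surely,
\[
\Lambda\bigl(x,y,\tau_{x/\varepsilon}\omega,\tau_{y/\varepsilon}\omega\bigr)\rightharpoonup\overline\Lambda(x,y)
\]
weakly-$*$ in $L^\infty_{\rm loc}(\rd\times\rd)$.
\end{itemize}

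This is the analogue of the periodic mean-value lemma used in Sections 4--5. I expect it to be the main technical point, because a single null set must serve all test functions and all $(x,y)$ simultaneously. The countable density argument, together with separability of $C(\Omega)$, handles this.

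\emph{Deterministic argument on the full-measure set.} Fix such an $\omega$.

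\begin{enumerate}
\item \emph{Uniform bounds.} Test \eqref{52} with $u^\varepsilon$. Using \eqref{1}, \eqref{eq:Phi condition} and the bounds $\gamma^{-1}\le\Lambda^\varepsilon\le\gamma$, we get
\[
\|u^\varepsilon\|_{W^{\alpha/p,p}}^p\le C\|f\|_{L^{p'}}^{p'},
\]
uniformly in $\varepsilon$ and $\omega$.
\item \emph{Compactness.} By reflexivity and the local compact embedding $W^{\alpha/p,p}\hookrightarrow L^p_{\rm loc}$, a subsequence converges weakly in $W^{\alpha/p,p}$ and strongly in $L^p_{\rm loc}$.
\item \emph{Passing to the limit.} Identify the limit as the solution of \eqref{54} exactly as in the proof of Theorem \ref{thm1}.
\begin{itemize}[label={}]
\item In the symmetric case, use the Minty trick: monotonicity of $\Phi'$ together with \eqref{eq:Phi derivation condition}. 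Split $\rd\times\rd$ into a near-diagonal region, which is small uniformly by the $W^{\alpha/p,p}$ bound, a far region, which is small by the tail of $K$, and a compact middle region. On the middle region $K\Phi'(v(y)-v(x))$ is integrable for smooth $v$, so the weak-$*$ convergence of $\Lambda^\varepsilon$ applies.
\item In the product case, use the $\Gamma$-convergence lower and upper bounds with the effective coefficient $\e{\boldsymbol\mu}^2/\e{\boldsymbol\mu/\boldsymbol\lambda}$, as in Section 4.
\end{itemize}
\item \emph{Strong convergence in all of $L^p(\rd)$.} Upgrade from $L^p_{\rm loc}$ by a tightness estimate: test with $u^\varepsilon\psi_R$ for a cutoff $\psi_R$ vanishing on $B_R$ and use the decay of $f$ outside large balls. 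Alternatively, show convergence of the energies, $\Gamma$-limit $=\lim$, and combine with the uniform convexity of $|t|^p$.
\item \emph{Full family.} Uniqueness of $u^0$, from strict convexity of $\Phi$ and strict monotonicity of $t\mapsto|t|^{p-2}t$, shows that the whole family $u^\varepsilon$ converges, not only a subsequence. This holds for every $\omega$ in the full-measure set, which gives the almost-sure statement.
\end{enumerate}
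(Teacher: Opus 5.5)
Your proposal is correct, and its top-level reduction is the paper's own. The paper writes out only the periodic symmetric nonlinear case. It obtains Theorem \ref{thm:nonlinear random} by replacing the periodic mean-value lemmas with their almost-sure ergodic counterparts: \cite[Lemma 3.1]{MR4000213} in the symmetric case, and Remarks \ref{remark:random weak convergence} and \ref{remark:bounded domain random convergence} in the product case. That is exactly your choice of a full-measure set followed by a deterministic argument. The differences lie inside that deterministic argument.

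\emph{Identifying the limit.} The paper passes to the limit directly in $\Phi'(u^\varepsilon(y)-u^\varepsilon(x))$ on $G_1^\delta$. It uses strong $L^p_{\mathrm{loc}}$ convergence and the modulus \eqref{eq:Phi derivation condition}, with separate H\"older estimates for $1<p\le 2$ and $p>2$. Your Minty argument evaluates $\Phi'$ only at the fixed test function. It therefore needs only monotonicity, the growth bound $|\Phi'(t)|\lesssim|t|^{p-1}$ and hemicontinuity of the limit operator, at the cost of a final density/hemicontinuity step. Because \eqref{1} is in force, your splitting by $|x-y|$ is also simpler than the level-set decomposition $G_1^\delta,G_2^\delta,G_3^\delta$ the paper carries over from the general-kernel linear case. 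Note that your middle region is effectively compact only because the test function has compact support.

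\emph{Strong convergence in $L^p(\rd)$.} The paper argues by contradiction. It proves the energy lower bound \eqref{28} via the splitting \eqref{29} with an approximant $u_\eta$. It then combines this with the energy identity \eqref{posit} and the fact that weak convergence plus convergence of norms gives strong convergence in $L^p$. Your tightness route tests with $u^\varepsilon\psi_R$ to get $\sup_\varepsilon\int_{\rd\setminus B_{2R}}|u^\varepsilon|^p\lesssim R^{-\alpha/p}+\|f\|_{L^{p'}(\rd\setminus B_R)}$. This uses $\alpha<p$ to get $\int K(x,y)|\psi_R(y)-\psi_R(x)|^p\,\mathrm{d}y\lesssim R^{-\alpha}$. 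It is more direct and quantitative and needs no lower-semicontinuity statement for the oscillating energy. It also carries over unchanged to the product case with the weight $\nu^\varepsilon$, and is close in spirit to the cutoff argument in Step 3 of the paper's Section 4.
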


\section{Preliminaries and auxiliary results}
We begin by recalling the definition of the fractional Sobolev spaces, which will serve as the natural functional setting for our problem. The fractional Sobolev space $W^{s,p}(\rd)$ with $p\in[1,\infty)$ and $s\in(0,1)$ is defined as
\begin{equation*}
  W^{s,p}(\rd):= \baren{u\in L^p(\rd) : \norm{u}_{W^{s,p}(\rd)}<\infty},
\end{equation*}
with norm
\begin{equation*}
  \norm{u}_{W^{s,p}(\rd)}=\norm{u}_{L^p(\rd)}+[u]_{W^{s,p}(\rd)},
\end{equation*}
and the Gagliardo seminorm
\begin{equation*}
  [u]_{W^{s,p}(\rd)} := \paren{\iintrdd\frac{\paren{u(x)-u(y)}^p}{\abs{x-y}^{d+sp}}\rmd x\rmd y}^{1/p}.
\end{equation*}
In the particular case $p=2$, $W^{s,2}(\rd)$ becomes a Hilbert space, commonly denoted by 
$H^s(\rd)$. In what follows, the notation $a\asymp b$ means that $a$ and $b$ are comparable functions
in the sense that there exists a constant $C\geq1$ independent of $a$ and $b$ such that
$a/C\leq b\leq Ca$ in a given domain.

The following lemma shows that the previous assumptions regarding the kernel function $K$ ensure that the nonlocal operator $\lvar$ in \eqref{eq,main equation} satisfies boundedness and coercivity estimates analogous to those of fractional Laplace type operators.
 \begin{lemma}\label{lemma:equivalent norm}
  Assume \eqref{A1}--\eqref{A4} or \eqref{B1}--\eqref{B3}. Let $\blv(x,y)$ satisfy
  $\blv(x,y)=\blv(y,x)$ and $1/C_3\leq\blv(x,y)\leq C_3$ with a constant $C_3\geq1$ for all $x$, $y\in\rd$. Then for all $u\in\hnorm$, we have 
  \begin{equation}\label{eq:equi norm sym form}
  \paren{-\lvar u, u}_{\ard}=\iintrdd \paren{u(x)-u(y)}K(x,y)\blv(x,y)u(x)\rmd y\rmd x
  \asymp[u]_{\hnorm}^2.  
  \end{equation}
  
 \end{lemma}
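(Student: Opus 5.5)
First I symmetrize the form. Since $K(x,y)\blv(x,y)$ is symmetric, swapping $x$ and $y$ in the double integral and averaging gives
\begin{equation*}
\paren{-\lvar u,u}=\frac12\iintrdd \paren{u(x)-u(y)}^2K(x,y)\blv(x,y)\rmd y\rmd x .
\end{equation*}
For $u\in\hnorm$ this needs some care, because the unsymmetrized integrand need not be absolutely integrable. I will therefore truncate to $\{\abs{x-y}>\delta\}$, where \eqref{A3}/\eqref{B2} make everything absolutely integrable. I symmetrize there and let $\delta\to0$ by monotone convergence; equivalently, one can take this identity as the meaning of the left-hand side. The bounds $1/C_3\le\blv\le C_3$ then reduce the lemma to showing
\begin{equation*}
\mathcal E_K(u):=\iintrdd \paren{u(x)-u(y)}^2K(x,y)\rmd y\rmd x\asymp[u]^2_{\hnorm}.
\end{equation*}

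\emph{Upper bound.} In Case 2 I use a dyadic shell decomposition. Write
\begin{equation*}
\mathcal E_K(u)=\sum_{k\in\mathbb Z}\intrd\int_{2^k\le\abs{x-y}<2^{k+1}}\paren{u(x)-u(y)}^2K(x,y)\rmd y\rmd x.
\end{equation*}
On a single shell, \eqref{B2} controls only the total mass $\int_{\abs{x-y}\ge 2^k}K\lesssim 2^{-k\alpha}$, not pointwise values, so I cannot simply compare $K$ with $\abs{x-y}^{-d-\alpha}$. Instead I use the standard representation $\paren{u(x)-u(y)}^2\lesssim$ averages of $\paren{u(x)-u(z)}^2+\paren{u(z)-u(y)}^2$ over $z\in B_{2^k}$ near the midpoint. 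After integrating against $K$ and using \eqref{B2} and symmetry, this bounds the shell by $2^{-k\alpha}2^{-kd}\iint_{\abs{x-z}\lesssim 2^k}\paren{u(x)-u(z)}^2$. Summing over $k$ reproduces $[u]^2_{\hnorm}$ up to a constant, since $\sum_k 2^{-k(d+\alpha)}\mathbbm 1_{\abs{h}\lesssim2^k}\asymp\abs h^{-d-\alpha}$.

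In Case 1 the same argument works, but Plancherel is simpler: $\mathcal E_K(u)=2\intrd\abs{\hat u(\xi)}^2\psi(\xi)\rmd\xi$ with $\psi(\xi)=\intrd\paren{1-\cos(\xi\cdot z)}K(z)\rmd z$. Splitting at $r=\abs\xi^{-1}$ and using $1-\cos t\le\min\{t^2/2,2\}$ gives
\begin{equation*}
\psi(\xi)\le \tfrac12\abs\xi^2\int_{B_r}\abs z^2K+2\int_{\rd\setminus B_r}K .
\end{equation*}
The second term is at most $2C_0\abs\xi^\alpha$ by \eqref{A3}. The first is handled by a layer-cake/dyadic argument with \eqref{A3}: $\int_{B_r}\abs z^2K\le\sum_{j\ge0}(2^{-j}r)^2\int_{B_{2^{-j}r}\setminus B_{2^{-j-1}r}}K\lesssim r^{2-\alpha}$. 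Here \eqref{A2} guarantees finiteness, so the Fourier representation is legitimate.

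\emph{Lower bound.} In Case 1 I use $1-\cos t\ge c\,t^2$ for $\abs t\le1$. Taking $r=\abs\xi^{-1}$ and $e=\xi/\abs\xi$,
\begin{equation*}
\psi(\xi)\ge c\abs\xi^2\int_{B_r}\abs{e\cdot z}^2K\ge c\,c_0\abs\xi^{2}r^{2-\alpha}=cc_0\abs\xi^\alpha
\end{equation*}
by \eqref{A4}, and $\intrd\abs{\hat u}^2\abs\xi^\alpha\asymp[u]^2_{\hnorm}$.

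Case 2 is the main obstacle. \eqref{B3} only says that on every ball a fixed fraction of points $y$ satisfy $K(x,y)\ge C_2\abs{x-y}^{-d-\alpha}$, which is a density condition rather than a pointwise bound. My plan is to follow the covering argument of \cite{MR4111815}. Cover scale $2^k$ by balls $B$ of radius $2^k$. For $x,y$ with $\abs{x-y}\approx2^k$, I introduce an intermediate point $z$ in a ball $B'$ of radius comparable to $2^k$ containing $x$ and $y$, restricted to the "good" set $G_x\cap G_y$, where $G_x=\{z: K(x,z)\ge C_2\abs{x-z}^{-d-\alpha}\}$. Positive measure of this intersection does not follow directly from $C_1$ if $C_1\le1/2$. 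To handle that I either work with several concentric or nested balls, or apply the standard trick: bound $\paren{u(x)-u(y)}^2\le2\paren{u(x)-u(z)}^2+2\paren{u(z)-u(y)}^2$, average over $z\in G_x\cap B'$, then control $\paren{u(z)-u(y)}^2$ iteratively or by comparing with averages $\paren{u(z)-(u)_{B'}}^2$. The cleanest version is to bound $\int_{B}\abs{u-(u)_B}^2$ by $\abs B^{-1}\iint_{B\times B}\paren{u(x)-u(z)}^2\mathbbm 1_{G_x}(z)$ via a "good-set Poincaré" lemma, itself proved by choosing $z$ in $G_x$ with measure $\ge C_1\abs B$. Summing the resulting mean-oscillation bounds over dyadic scales, and using that $[u]_{\hnorm}^2\asymp\sum_k 2^{-k\alpha}\sum_{B\in\mathcal D_k}\abs B^{-1}\int_B\abs{u-(u)_B}^2$, would finish the proof. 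I expect this good-set Poincaré step to be the delicate one, and if needed I would cite \cite{MR4111815} for it directly.
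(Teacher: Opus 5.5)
Your truncated symmetrization (more careful than the paper's bare exchange of variables), your Fourier-symbol proof of both bounds in Case 1, and your averaging proof of the upper bound in Case 2 are all correct. On those parts you are more self-contained than the paper, which simply cites \cite{MR4769823,MR4111815,MR4049224} for $(u,u)_K\asymp[u]_{\hnorm}^2$.

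The gap is the Case 2 lower bound. The single-scale ``good-set Poincar\'e'' inequality
\begin{equation*}
\int_B\abs{u-(u)_B}^2\lesssim\abs{B}^{-1}\iint_{B\times B}(u(x)-u(z))^2\mathbbm{1}_{G_x}(z)\rmd z\rmd x
\end{equation*}
is false under \eqref{B1}--\eqref{B3}. Let $S$ be a checkerboard of cubes of side $\ell$. Set $K(x,y)=\abs{x-y}^{-d-\alpha}$ if $\abs{x-y}<\ell$ or $\mathbbm{1}_S(x)=\mathbbm{1}_S(y)$, and $K(x,y)=0$ otherwise. Then \eqref{B1} and \eqref{B2} are immediate. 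Moreover, \eqref{B3} holds with $C_2=1$ and $C_1=C_1(d)$ independent of $\ell$:
\begin{itemize}
\item a ball of radius $\rho\lesssim\ell$ containing $x$ meets $B_\ell(x)$ in a fixed fraction of its volume;
\item a ball of radius $\rho\gg\ell$ is about half filled by cubes of the colour of $x$.
\end{itemize}
Yet for $u=\mathbbm{1}_S$ and $B=B_r$ with $r\gg\ell$, the left side is about $\abs{B}/4$, while the right side is at most $\abs{B_\ell}\sim\ell^d$. The reason is that a pair with $x\in S$, $z\notin S$ is good only when $\abs{x-z}<\ell$. Your one-step argument breaks for the same reason. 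Averaging over $z\in G_x\cap B$ only controls $\abs{u(x)-(u)_{G_x\cap B}}^2$, and these reference averages depend on $x$: here they are close to $1$ on $S$ and close to $0$ off $S$.

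So coercivity under \eqref{B3} is genuinely multi-scale. The energy of pairs that are bad at scale $r$ has to be recovered from much smaller scales, where the singular weight $\abs{x-z}^{-d-\alpha}$ (rather than $\abs{B}^{-1}$) makes the near-diagonal good pairs dominate. Carrying this out is the content of \cite[Theorem 1.2]{MR4111815}.

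Your dyadic characterization also needs repair. Scaling $u\mapsto u(\lambda\,\cdot)$ shows the factor $\abs{B}^{-1}$ must be dropped. With a single dyadic grid the equivalence fails for $\alpha\ge1$: $u=\mathbbm{1}_{[0,1)^d}$ has a finite dyadic sum but $[u]_{\hnorm}=\infty$. So overlapping balls are required.

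The correct fix is to invoke \cite[Theorem 1.2]{MR4111815} for the full inequality $(u,u)_K\gtrsim[u]_{\hnorm}^2$, not for a single-scale lemma. That is exactly what the paper does.
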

 \begin{proof}
   By the symmetry of $K$ and $\blv$, exchanging the integration variables shows that
   \begin{equation*}
     \begin{aligned}
     &\iintrdd \paren{u(x)-u(y)}K(x,y)\blv(x,y)u(x)\rmd y\rmd x\\
     &=\iintrdd \paren{u(y)-u(x)}K(x,y)\blv(x,y)u(y)\rmd x\rmd y\\
     &=\frac{1}{2}\iintrdd \paren{u(x)-u(y)}^2K(x,y)\blv(x,y)\rmd x\rmd y\\
     &\asymp\iintrdd \paren{u(x)-u(y)}^2K(x,y)\rmd x\rmd y.
     \end{aligned}
   \end{equation*} 
   We are reduced to proving
   \begin{equation}\label{eq:equai norm}
    (u,u)_K := \iintrdd \paren{u(x)-u(y)}^2K(x,y)\rmd x\rmd y\asymp[u]_{\hnorm}^2.
   \end{equation}
    In  Case 1, the proof of \eqref{eq:equai norm} can be found in \cite[Lemma 2.2.17]{MR4769823}. While in Case 2,  we refer the reader to \cite[Theorem 1.2]{MR4111815} and \cite[Lemma 4.2]{MR4049224}.
 \end{proof}
 \begin{remark}\label{remark:equi norm product form}
    The same proof of Lemma \ref{lemma:equivalent norm} remains valid for the product-structured $\blv$, i.e., \eqref{eq:perodic product} or \eqref{eq:random product}, with a slight change of \eqref{eq:equi norm sym form}:
   \begin{equation*}
     \paren{-\lvar u, u}_{\ardnu} =\iintrdd \paren{u(x)-u(y)}K(x,y)u(x)\muep(x)\muep(y)\rmd y\rmd x
  \asymp[u]_{\hnorm}^2.
   \end{equation*}
   \end{remark}
 Hereinafter, we use $C$ to denote various positive constants, possibly different from line to line, whose dependence on other parameters will be clear from the context unless explicitly stated. We  also write $a\lesssim b$ instead of $a\leq Cb$ for uniform constant $C$ which does not need to be represented exactly and may change from line to line.  
   
  The following a prior estimate is an immediate consequence of Lemma \ref{lemma:equivalent norm} and Remark \ref{remark:equi norm product form} if we take $\uep$ itself as a test function.
  \begin{corollary}\label{corollary:a prior estimate}
    For the solutions $\uep$ in Theorems \ref{thm:periodic product structure}--\ref{thm:random product structure} and Theorems \ref{thm:periodic sym structure}--\ref{thm:random sym structure}, we have
    \begin{equation}\label{eq:a prior estimate}
     \norm{\uep}_{\hnorm}\lesssim\norm{f}_{\ard}.
    \end{equation}
  \end{corollary}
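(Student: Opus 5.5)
The plan is to test the weak formulation with $v=\uep$ itself. The full $\hnorm$ norm then comes from two pieces: the seminorm, supplied by Lemma \ref{lemma:equivalent norm} (or Remark \ref{remark:equi norm product form}), and the $L^2$ part, supplied by the zeroth-order term $m(\cdot,\cdot)$. All constants below depend only on $d,\alpha,m,\gamma$ and the kernel constants $C_0,c_0$ (or $C_1,C_2$); in particular they are independent of $\varepsilon$ and of $\omega$.

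\emph{Symmetric-type coefficients.} This covers Theorems \ref{thm:periodic sym structure} and \ref{thm:random sym structure}. Here $\blv$ is symmetric by \eqref{eq:s3} or \eqref{symm}, and it is bounded between $\gamma^{-1}$ and $\gamma$. Taking $v=\uep$ in \eqref{eq:weak solution def2} gives
\begin{equation*}
m\norm{\uep}_{\ard}^2+\paren{-\lvar\uep,\uep}=(f,\uep).
\end{equation*}
By Lemma \ref{lemma:equivalent norm}, the second term on the left is bounded below by $c[\uep]_{\hnorm}^2$, where $c>0$ is uniform in $\varepsilon$ and $\omega$. The right-hand side is handled by Cauchy--Schwarz and Young:
\begin{equation*}
(f,\uep)\le \tfrac{m}{2}\norm{\uep}_{\ard}^2+\tfrac{1}{2m}\norm{f}_{\ard}^2 .
\end{equation*}
Absorbing the first term into the left side yields $\tfrac m2\norm{\uep}_{\ard}^2+c[\uep]_{\hnorm}^2\le \tfrac1{2m}\norm f_{\ard}^2$, which is \eqref{eq:a prior estimate}.

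\emph{Product-type coefficients.} This covers Theorems \ref{thm:periodic product structure} and \ref{thm:random product structure}, and the only extra care needed is the weight $\nuep=\muep/\lamep$. By \eqref{eq:lambda mu bound} or \eqref{eq:lambda mu random bound}, $\gamma^{-2}\le\nuep\le\gamma^2$, so $\ardnu$ and $\ard$ have equivalent norms, with constants depending only on $\gamma$. Taking $v=\uep$ in \eqref{eq:definition of weak solution of lvar product form}, the bilinear term is $\paren{-\lvar \uep,\uep}_{\ardnu}$. It has the symmetric weight $\muep(x)\muep(y)$, which lies in $[\gamma^{-2},\gamma^2]$, so Remark \ref{remark:equi norm product form} bounds it below by $c[\uep]_{\hnorm}^2$. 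The right-hand side is estimated by $\gamma^2\norm f_{\ard}\norm{\uep}_{\ard}$, and the left-hand zeroth-order term is at least $m\gamma^{-2}\norm{\uep}_{\ard}^2$. The same Young absorption then gives the bound.

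\emph{Expected obstacle.} There is no genuine difficulty here. The one point to check is that the constant in Lemma \ref{lemma:equivalent norm} depends on $\blv$ only through its bounds, so it is uniform in $\varepsilon$ and, in the random cases, in $\omega$. A second point to check is that the test function $\uep$ is admissible, which holds because the weak formulation allows any $v\in\hnorm$.
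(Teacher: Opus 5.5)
Your proposal is correct and is exactly the paper's argument. The paper only remarks that the estimate follows from Lemma \ref{lemma:equivalent norm} and Remark \ref{remark:equi norm product form} by testing with $\uep$; you have filled in the Young absorption and the $\gamma^{\pm 2}$ equivalence of $\ardnu$ with $\ard$. One fine point, which the paper shares: in the periodic case, the symmetry $\blv(x,y)=\blv(y,x)$ needed for the lemma really requires $\Lambda(x,y,\xi,\eta)=\Lambda(y,x,\eta,\xi)$, as in \eqref{symm}, whereas the literal first line of \eqref{eq:s3} only swaps the fast variables.
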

 
 Throughout the paper we will frequently use the fact that $h(x/\varepsilon)\rightarrow\mean h$ 
 weakly in $\prdloc$ as $\varezero$ for $p\in[1,\infty)$
 whenever $h$ is a $1$-periodic function and $h\in L^p([0,1]^d)$. 
 For the proof of this fact, we refer the reader to \cite[Proposition 2.2.2]{MR3838419}
 or \cite[Section 1.1]{jikov2012homogenization}.
 
 We now state some auxiliary lemmas and remarks concerning convergence that will be instrumental in proving our main results.
 \begin{lemma}\label{lemma:periodic weak convergence}
   Let $h\in\linftyrd$ be a $1$-periodic function. Fix $p\in(1,\infty)$, and let $g^\varepsilon$ be bounded
uniformly in $\prd$ for all $0<\varepsilon<1$. Assume that there exists a function 
$g\in\prd$ such that $g^\varepsilon\rightarrow g$ strongly in $\prdloc$ as $\varezero$.
 Then, $h(\cdot/\varepsilon)g^\varepsilon\rightarrow\langle h\rangle g$ weakly in $\prd$
as $\varepsilon\rightarrow0$.
 \end{lemma}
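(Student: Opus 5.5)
We test against functions in the dual space $\qrd$, $p'=p/(p-1)$. Since $p\in(1,\infty)$, $\prd$ is reflexive with dual $\qrd$. The family $h(\cdot/\varepsilon)g^\varepsilon$ is bounded in $\prd$ by $\norm{h}_{\linftyrd}\sup_\varepsilon\norm{g^\varepsilon}_{\prd}$. So it suffices to show
\[
\intrd h(x/\varepsilon)g^\varepsilon(x)\varphi(x)\rmd x\rightarrow\mean{h}\intrd g(x)\varphi(x)\rmd x
\]
for $\varphi$ in a dense subset of $\qrd$, for instance $\varphi\in\ccinfty$. Once this holds, a standard $\epsilon/3$ argument extends it to all $\varphi\in\qrd$. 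Indeed, for $\psi\in\ccinfty$ with $\norm{\varphi-\psi}_{\qrd}$ small, the error is at most $\bigl(\norm{h}_{\linftyrd}\sup_\varepsilon\norm{g^\varepsilon}_{\prd}+\abs{\mean h}\norm{g}_{\prd}\bigr)\norm{\varphi-\psi}_{\qrd}$.

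Fix $\varphi\in\ccinfty$ with support in a compact set $Q$, and split
\[
\intrd h^\varepsilon g^\varepsilon\varphi-\mean h\intrd g\varphi
=\int_Q h^\varepsilon (g^\varepsilon-g)\varphi+\Bigl(\int_Q h^\varepsilon g\varphi-\mean h\int_Q g\varphi\Bigr),
\]
where $h^\varepsilon:=h(\cdot/\varepsilon)$.

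\begin{itemize}
\item[(i)] For the first term, Hölder's inequality gives the bound $\norm{h}_{\linftyrd}\norm{g^\varepsilon-g}_{L^p(Q)}\norm{\varphi}_{\qrd}$. This tends to zero by the strong $\prdloc$ convergence.
\item[(ii)] For the second term, note that $g\varphi\in L^1(Q)$, since $g\in L^p$ and $\varphi$ is bounded with compact support. We use the recalled fact that $h^\varepsilon\rightharpoonup\mean h$ weakly-$*$ in $L^\infty(Q)$. This is the standard periodic averaging lemma: $h\in\linftyrd$ is periodic, hence lies in every $L^q([0,1]^d)$, and weak-$*$ convergence against $L^1$ functions follows by approximating $L^1$ functions with simple functions on cubes (or with $L^{q'}$ functions, using the uniform $L^\infty$ bound). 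Hence the second term tends to zero.
\end{itemize}

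The only mildly delicate point is step (ii). The fact recalled before the lemma gives weak convergence in $L^q_{\text{loc}}$ for finite $q$, that is, against $L^{q'}$ test functions. Here we must test against $g\varphi$, which is only in $L^p(Q)$. This is fine: we take $q=p'$, so $q'=p$, and since $p'<\infty$ the recalled fact applies directly with $g\varphi\in L^{p}(Q)$ as the test function. The rest is routine.
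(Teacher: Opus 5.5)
Your proposal is correct and follows essentially the same route as the paper. You reduce to testing against $\varphi\in\ccinfty$, control the $(g^\varepsilon-g)$ term by H\"older and the strong $\prdloc$ convergence, handle the remaining term via the weak $L^{p'}_{\text{loc}}$ convergence of $h(\cdot/\varepsilon)$ to $\mean h$ tested against the compactly supported $g\varphi\in L^p$, and extend to all of $\qrd$ by density using the uniform $\prd$ bound. Your detour through weak-$*$ convergence in $L^\infty(Q)$ in step (ii) is valid but unnecessary, since the choice $q=p'$ in your last paragraph is exactly what the paper uses.
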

 \begin{proof}
 Let $h^\varepsilon(x) := h(x/\varepsilon)$.
   It is sufficient to prove that  
   \begin{equation}\label{eq:hg weak}
    \lim_{\varezero} \intrd h^\varepsilon g^\varepsilon\varphi=
     \intrd\langle h\rangle g\varphi
   \end{equation}
   for all $\varphi\in\qrd$, where $p'$ is the conjugate exponent of $p$.
   We first assume that $\varphi\in\ccinfty$. By the triangle inequality and H\"older's 
   inequality, we have
   \begin{equation*}
     \begin{aligned}
   \abs{\intrd\left(h^\varepsilon g^\varepsilon\varphi-\langle h\rangle g\varphi\right)}
  &\leq\abs{\intrd\paren{h^\varepsilon g^\varepsilon\varphi-
   h^\varepsilon g\varphi}}+
   \abs{\intrd\paren{h^\varepsilon g\varphi-\langle h\rangle g\varphi}}\\
  &\leq\paren{\int_{\supp \varphi}\abs{g^\varepsilon-g}^p}^{1/p}
  \paren{\int_{\supp \varphi}\abs{h^\varepsilon\varphi}^q}^{1/q}+
  \abs{\int_{\supp \varphi}\paren{h^\varepsilon-\langle h\rangle}g\varphi}.
  \end{aligned}
   \end{equation*}
  Observe that  $h^\varepsilon\varphi$ is bounded uniformly in $\linftyrd$ and that $g\varphi\in\prd$.
  Hence we conclude from the fact that $g^\varepsilon\rightarrow g$  strongly in $\prdloc$ as $\varezero$
  and that $h^\varepsilon\rightarrow\mean{h}$ weakly in $\qrdloc$ as $\varezero$ that
  \begin{equation*}
    \limsup_{\varezero}\abs{\intrd\paren{h^\varepsilon g^\varepsilon-\mean h g}\varphi}
    =0.
  \end{equation*}
  We now turn to the case when $\varphi\in\qrd$. For each $\delta>0$, we  choose $\varphi^\delta\in\ccinfty$ such that $\norm{\varphi^\delta-\varphi}_{\qrd}<\delta$. Therefore, applying the triangle inequality and H\"older's 
   inequality to derive
  \begin{equation*}
     \begin{aligned}
   &\limsup_{\varezero}\abs{\intrd\left(h^\varepsilon g^\varepsilon\varphi-\langle h\rangle g\varphi\right)}\\
  &\leq\limsup_{\varezero}\abs{\intrd\paren{h^\varepsilon g^\varepsilon\paren{\varphi-\varphi^\delta}}}+
   \limsup_{\varezero}\abs{\intrd\paren{h^\varepsilon g^\varepsilon-\mean h g}\varphi^\delta}
   +\limsup_{\varezero}\abs{\intrd\mean h g\paren{\varphi-\varphi^\delta}}\\
  &\leq\limsup_{\varezero}\norm{h}_{\linftyrd}\norm{g^\varepsilon}_{\prd}\norm{\varphi-\varphi^\delta}_{\qrd}
  +0+\abs{\mean h}\norm{g}_{\prd}\norm{\varphi-\varphi^\delta}_{\qrd}\\
  &\leq C\delta,
  \end{aligned}
   \end{equation*}
   where the constant $C>0$ is independent of $\delta$ in the last inequality. By the arbitrariness  of $\delta>0$, letting $\delta\rightarrow0$ yields the desired formula \eqref{eq:hg weak}.
 \end{proof}
 \begin{remark}\label{remark:random weak convergence}
     If we replace the assumption that $h\in\linftyrd$ is a $1$-periodic function with the assumption that almost surely $h(x,\omega)\in\linftyrd$  is stationary in the sense of \eqref{eq:lambda mu stationary}, i.e., there exists a random variable $\boldsymbol{h}$ such that almost surely $h(x,\omega)=\boldsymbol{h}(\tau_x\omega)$  for all $x\in\rd$,
     we can apply a proof that is nearly identical to the one for Lemma \ref{lemma:periodic weak convergence} to show that
 $h(\cdot/\varepsilon,\omega)g^\varepsilon\rightarrow\e{\boldsymbol{h}}g$  weakly in $\prd$ almost surely as $\varezero$. But here, instead of using the fact that  $h^\varepsilon\rightarrow\mean{h}$ weakly in $\qrdloc$, we need to employ that $\hep(x,\omega):=h(x/\varepsilon,\omega)\to\e{\boldsymbol{h}}$ weakly in $\qrdloc$ almost surely, which is a standard consequence of the Birkhoff ergodic theorem \cite[Proposition 7.2]{jikov2012homogenization}.
 \end{remark}
 \begin{lemma}\label{lemma:bounde domain periodic convergence}
   Let $G$ be a bounded measurable subset in $\rd\times\rd$, and let $k(x,y)\in\linftyg$. Then, under the assumptions of Lemma \ref{lemma:periodic weak convergence} for $p=2$, we have
    \begin{equation}\label{eq:bounde domain periodic convergence}
      \lim_{\varezero}\iint_{G}\hep(x)\hep(y)\paren{\gep(x)-\gep(y)}^2k(x,y)\rmd x\rmd y=
     \mean{h}^2\iint_{G}\paren{g(x)-g(y)}^2k(x,y)\rmd x\rmd y.
    \end{equation}
    \end{lemma}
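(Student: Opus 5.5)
The plan is to first replace $\gep$ by its limit $g$, and then pass to the limit in the oscillating factor $\hep(x)\hep(y)$. For the second step I will view $\hep(x)\hep(y)$ as a single $\mathbb{Z}^{2d}$-periodic function on $\mathbb{R}^{2d}$, so that the periodic averaging fact recalled before Lemma \ref{lemma:periodic weak convergence} can be applied in dimension $2d$.

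Since $G$ is bounded, fix $R>0$ with $G\subset B_R\times B_R$, and set $\tilde k:=k\mathbbm{1}_G\in L^\infty(\mathbb{R}^{2d})$, which is supported in $B_R\times B_R$. Write $a^\varepsilon(x,y):=\gep(x)-\gep(y)$ and $a(x,y):=g(x)-g(y)$. The strong convergence $\gep\to g$ in $\ardloc$ gives
\begin{equation*}
\norm{a^\varepsilon-a}_{L^2(B_R\times B_R)}\le 2\abs{B_R}^{1/2}\norm{\gep-g}_{L^2(B_R)}\to0 .
\end{equation*}
Similarly, $a^\varepsilon+a$ is bounded in $L^2(B_R\times B_R)$, because $\gep$ is uniformly bounded in $\ard$ and $g\in\ard$. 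We have the identity $(a^\varepsilon)^2-a^2=(a^\varepsilon-a)(a^\varepsilon+a)$, and $\hep$ and $\tilde k$ are bounded in $L^\infty$. Hence Cauchy--Schwarz gives
\begin{equation*}
\abs{\iint_G \hep(x)\hep(y)\paren{(a^\varepsilon)^2-a^2}k\rmd x\rmd y}\le \norm{h}_{\linftyrd}^2\norm{k}_{\linftyg}\norm{a^\varepsilon-a}_{L^2}\norm{a^\varepsilon+a}_{L^2}\to0 .
\end{equation*}
It therefore suffices to prove \eqref{eq:bounde domain periodic convergence} with $\gep$ replaced by $g$.

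For this, set $F(x,y):=\tilde k(x,y)\paren{g(x)-g(y)}^2$. Then $F\in L^1(\mathbb{R}^{2d})$ with support in $B_R\times B_R$, since $\paren{g(x)-g(y)}^2\le 2g(x)^2+2g(y)^2$ and this is integrable over $B_R\times B_R$. Define $H(\xi,\eta):=h(\xi)h(\eta)$. This is a bounded $1$-periodic function on $\mathbb{R}^{2d}$ with mean $\mean{H}=\mean h^2$, and $\hep(x)\hep(y)=H\paren{(x,y)/\varepsilon}$. By the averaging fact applied in $\mathbb{R}^{2d}$ with $p=2$, we get $H(\cdot/\varepsilon)\to\mean h^2$ weakly in $L^2_{\mathrm{loc}}(\mathbb{R}^{2d})$. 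This allows testing against any compactly supported $L^2$ function, whereas $F$ is only known to be in $L^1$.

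The main (mild) obstacle is exactly this gap between $L^1$ and $L^2$ test functions. I will close it by truncation. Let $F_M:=\min\{F,M\}$ (note $F\ge0$ when $k\ge 0$; in general truncate $F$ at $\pm M$). Then $F_M\in L^2$ with compact support, and $\norm{F-F_M}_{L^1}\to0$ as $M\to\infty$ by dominated convergence. This yields
\begin{equation*}
\limsup_{\varezero}\abs{\iint H(\tfrac{x}{\varepsilon},\tfrac{y}{\varepsilon})F-\mean h^2\iint F}\le\paren{\norm{h}_{\linftyrd}^2+\mean h^2}\norm{F-F_M}_{L^1}+\lim_{\varezero}\abs{\iint\paren{H(\tfrac{\cdot}{\varepsilon})-\mean h^2}F_M}.
\end{equation*}
The last term vanishes for each fixed $M$ by the weak convergence above. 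Letting $M\to\infty$ then gives \eqref{eq:bounde domain periodic convergence}.
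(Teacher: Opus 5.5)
Your proof is correct, but it obtains the key weak convergence differently from the paper. You regard $h(\xi)h(\eta)$ as a single $\mathbb{Z}^{2d}$-periodic function on $\mathbb{R}^{2d}$ with mean $\mean{h}^2$ and apply the averaging fact in dimension $2d$. The paper instead approximates test functions in $L^2(D\times E)$ by finite sums $\sum_i\varphi_1^i(x)\varphi_2^i(y)$; for these, $\iint\hep(x)\hep(y)\varphi_1^i(x)\varphi_2^i(y)\rmd x\rmd y$ factorizes into two $d$-dimensional integrals, and the paper then exhausts $G$ by finite unions of rectangles. The paper notes that a simpler periodic argument exists, and yours is essentially it. It chooses the tensor-product route because that route carries over to Remark \ref{remark:bounded domain random convergence}. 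In the stationary ergodic case, $\boldsymbol h(\tau_{x/\varepsilon}\omega)\boldsymbol h(\tau_{y/\varepsilon}\omega)$ is not directly a stationary field in $(x,y)$ for an ergodic $\mathbb{R}^{2d}$-action: the natural product action on $\Omega\times\Omega$ sees $(\omega,\omega)$ only on the diagonal, a null set. So your $2d$-dimensional step has no immediate stochastic analogue, whereas the factorized integrals need only the one-variable Birkhoff theorem. On the $g$-dependent part, your argument is leaner than the paper's. You replace $\gep$ by $g$ in one stroke via $(a^\varepsilon)^2-a^2=(a^\varepsilon-a)(a^\varepsilon+a)$ and then truncate the whole integrand $F$. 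The paper expands the square into three terms and truncates $g$ itself to handle $g(x)^2k(x,y)\notin\ag$. You could even drop your truncation, since a bounded periodic $H$ satisfies $H(\cdot/\varepsilon)\to\mean{H}$ weakly-$*$ in $L^\infty(\mathbb{R}^{2d})$, which admits $L^1$ test functions directly. Your reduction step also combines with the paper's tensor-product convergence result, giving one proof that covers both the periodic and the random case.
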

    \begin{proof}
      We begin by proving that $\hep(x)\hep(y)\rightarrow\mean{h}^2$ weakly in $\ag$ as $\varezero$. Although a simpler proof is available for the case of periodic $h$, we present a unified argument that also applies to the case of random $h$ discussed in Remark \ref{remark:bounded domain random convergence} below.  We first assume that $G$ is a closed rectangle in $\rd\times\rd$, i.e., $G=D\times E$ for some closed rectangles $D$, $E\subset\rd$. Since $L^2(D)$ and $L^2(E)$ are separable Hilbert spaces, for any $\varphi(x,y)\in L^2(D\times E)$ and $\theta_1>0$,  we can choose a finite set of product functions 
      \begin{equation*}
        \{\varphi_1^i(x)\varphi_2^i(y):\varphi_1^i\in L^2(D), \varphi_2^i\in L^2(E),  i=1,2,\ldots,N(\varphi,\theta_1)\},
      \end{equation*}
      such that
      \begin{equation*}
        \norm{\varphi(x,y)-\sum_{i=1}^{N}\varphi_1^i(x)\varphi_2^i(y)}_{L^2(D\times E)}\leq\theta_1.
      \end{equation*}
     Observe that $\abs{\hep}\leq C$ almost everywhere and that $\abs{D\times E}<\infty$. Hence, it follows from the triangle inequality and H\"older's inequality
      that
      \begin{align*}
        & \limsup_{\varezero}\abs{\iint_{D\times E}\hep(x)\hep(y)\varphi(x,y)\rmd x\rmd y-\iint_{D\times E}\mean{h}^2\varphi(x,y)\rmd x\rmd y}\\
        &\leq \limsup_{\varezero}\abs{\iint_{D\times E}\hep(x)\hep(y)\paren{\varphi(x,y)-\sum_{i=1}^{N}\varphi_1^i(x)\varphi_2^i(y)}\rmd x\rmd y}\\
       &\quad +\limsup_{\varezero}\sum_{i=1}^{N}\abs{\iint_{D\times E}\hep(x)\hep(y)\varphi_1^i(x)\varphi_2^i(y)\rmd x\rmd y-\iint_{D\times E}\mean{h}^2\varphi_1^i(x)\varphi_2^i(y)\rmd x\rmd y}\\
       &\quad+\limsup_{\varezero} \abs{\iint_{D\times E}\mean{h}^2\paren{\varphi(x,y)-\sum_{i=1}^{N}\varphi_1^i(x)\varphi_2^i(y)}\rmd x\rmd y} \\
         & \lesssim\theta_1+0+\theta_1\lesssim\theta_1,
      \end{align*}
  where we have used 
  \begin{equation*}
   \lim_{\varezero}\iint_{D\times E}\hep(x)\hep(y)\varphi_1^i(x)\varphi_2^i(y)\rmd x\rmd y
   =\int_{D}\mean{h}\varphi_1^i(x)\rmd x\int_{E}\mean{h}\varphi_2^i(y)\rmd y
  \end{equation*}    
   in the second inequality.
   From the arbitrariness of $\varphi$ and $\theta_1$, we  deduce that $\hep(x)\hep(y)\rightarrow\mean{h}^2$ weakly in $L^2(D\times E)$.
   For any bounded measurable subset $G\subset\rd\times\rd$ and $\theta_2>0$, we  choose finitely many pairwise almost disjoint closed rectangles $D_j\times E_j\subset G$, $j=1,2,\ldots,N(G,\theta_2)$ 
   such that $\abs{G\setminus\cup_{j=1}^{N}\paren{D_j\times E_j}}\leq\theta_2$. Since we have shown that $\hep(x)\hep(y)\rightarrow\mean{h}^2$ weakly in $L^2(D_j\times E_j)$ for each $j$, the assertion that $\hep(x)\hep(y)\rightarrow\mean{h}^2$ weakly in $\ag$ can be deduced by a standard approximation argument.
   
   In order to prove \eqref{eq:bounde domain periodic convergence}, it suffices to show that the following relations hold:
   \begin{gather}
      \lim_{\varezero}\iint_{G}\hep(x)\hep(y)\paren{\gep(x)}^2k(x,y)\rmd x\rmd y=
     \mean{h}^2\iint_{G}\paren{g(x)}^2k(x,y)\rmd x\rmd y,
     \label{eq:bounde domain periodic convergence 1}\\
     \lim_{\varezero}\iint_{G}\hep(x)\hep(y)\paren{\gep(y)}^2k(x,y)\rmd x\rmd y=
     \mean{h}^2\iint_{G}\paren{g(y)}^2k(x,y)\rmd x\rmd y,
     \label{eq:bounde domain periodic convergence 2} \\
     \lim_{\varezero}\iint_{G}\hep(x)\hep(y)\gep(x)\gep(y)k(x,y)\rmd x\rmd y=
     \mean{h}^2\iint_{G}g(x)g(y)k(x,y)\rmd x\rmd y.
     \label{eq:bounde domain periodic convergence 3}
   \end{gather}
   We first prove \eqref{eq:bounde domain periodic convergence 1}; \eqref{eq:bounde domain periodic convergence 2} follows by a similar argument.
   By the triangle inequality, we only need to prove that
   \begin{equation}\label{eq:first term from 1}
     \lim_{\varezero}\iint_{G}\hep(x)\hep(y)\paren{\paren{\gep(x)}^2-\paren{g(x)}^2}k(x,y)\rmd x\rmd y=0
   \end{equation}
   and that
    \begin{equation}\label{eq:second term from 1}
     \lim_{\varezero}\iint_{G}\paren{\hep(x)\hep(y)-\mean{h}^2}\paren{g(x)}^2k(x,y)\rmd x\rmd y=0.
   \end{equation}
   Since $G\subset\rd\times\rd$ is a bounded measurable subset, we may suppose that $G\subset D\times E$ for some closed rectangles $D$, $E\subset\rd$. For \eqref{eq:first term from 1}, from the uniform boundedness of $\hep$, $k(x,y)$ in $\linftyg$ and $\gep$ in $\ard$ , H\"older's inequality, and the fact that $\gep\rightarrow g$ strongly in $\ardloc$ as $\varezero$, it follows that
   \begin{align*}
     & \abs{\iint_{G}\hep(x)\hep(y)\paren{\paren{\gep(x)}^2-\paren{g(x)}^2}k(x,y)\rmd x\rmd y} \\
     &\lesssim\paren{\int_{D}\abs{\gep-g}^2}^{1/2}\paren{\int_{D}\abs{\gep+g}^2}^{1/2}
     \longrightarrow0,\quad \text{as}\hsp \varezero.
   \end{align*}
     For \eqref{eq:second term from 1}, note that $\paren{g(x)}^2k(x,y)\notin\ag$, thus it is not valid to deduce \eqref{eq:second term from 1} directly from the fact that $\hep(x)\hep(y)\rightarrow\mean{h}^2$ weakly in $\ag$. To overcome the difficulty, we introduce the truncated function $g_n(x):=\max\baren{-n,\min\baren{g(x),n}}$, which clearly satisfies that $g_n\in\linftyrd$ for each $n\in\mathbb{N^*}$ and that $g_n\rightarrow g$ strongly in $\ag$ as $n\to\infty$.
  Hence, we conclude that
  \begin{equation*}
    \lim_{\varezero}\abs{\iint_{G}\paren{\hep(x)\hep(y)-\mean{h}^2}\paren{g_n(x)}^2k(x,y)\rmd x\rmd y}=0,
  \end{equation*}
  and that for any $0<\theta_3<1$, there exists $N(\theta_3)>0$ such that for all $N<n\in\mathbb{N^*}$, 
  $\norm{g_n-g}_{\ard}<\theta_3$.
  Consequently, proceeding as in the proof of \eqref{eq:first term from 1} and employing the triangle inequality, for all $N<n\in\mathbb{N^*}$, we have
  \begin{align*}
    &\limsup_{\varezero}\abs{\iint_{G}\paren{\hep(x)\hep(y)-\mean{h}^2}\paren{g(x)}^2k(x,y)\rmd x\rmd y} \\
    &\leq  \limsup_{\varezero}\abs{\iint_{G}\paren{\hep(x)\hep(y)-\mean{h}^2}\paren{\paren{g(x)}^2-
    \paren{g_n(x)}^2} k(x,y)\rmd x\rmd y} \\
    &\quad + \lim_{\varezero}\abs{\iint_{G}\paren{\hep(x)\hep(y)-\mean{h}^2}\paren{g_n(x)}^2k(x,y)\rmd x\rmd y}\\
    &\leq C\paren{\int_{D}\abs{g-g_n}^2}^{1/2}\paren{\int_{D}\abs{g+g_n}^2}^{1/2}\leq C\theta_3,
  \end{align*}
  where the constant $C>0$ is independent of $n$ and $\theta_3$. Therefore, letting $\theta_3\rightarrow0$ yields \eqref{eq:second term from 1}. 
  
  We proceed to prove \eqref{eq:bounde domain periodic convergence 3}. Similar to the proof of \eqref{eq:first term from 1} and by applying the triangle inequality, we obtain
  \begin{align*}
    & \limsup_{\varezero}\abs{\iint_{G}\hep(x)\hep(y)\gep(x)\gep(y)k(x,y)\rmd x\rmd y-
     \mean{h}^2\iint_{G}g(x)g(y)k(x,y)\rmd x\rmd y} \\
     & \leq \limsup_{\varezero}\abs{\iint_{G}\hep(x)\hep(y)\paren{\gep(x)\gep(y)-g(x)g(y)}k(x,y)\rmd x\rmd y}\\
     & \quad+\lim_{\varezero}\abs{\iint_{G}\paren{\hep(x)\hep(y)-\mean{h}^2}g(x)g(y)k(x,y)\rmd x\rmd y} \\
     & \lesssim \limsup_{\varezero}\iint_{G}\abs{\gep(x)\gep(y)-g(x)g(y)}\rmd x\rmd y\\
     &\leq\limsup_{\varezero}\iint_{D\times E}\abs{\gep(x)\gep(y)-\gep(x)g(y)}\rmd x\rmd y
     +\limsup_{\varezero}\iint_{D\times E}\abs{\gep(x)g(y)-g(x)g(y)}\rmd x\rmd y\\
     &\lesssim\limsup_{\varezero}\norm{\gep}_{L^2(D)}\norm{\gep-g}_{L^2(E)}+
     \limsup_{\varezero}\norm{g}_{L^2(E)}\norm{\gep-g}_{L^2(D)}=0,
  \end{align*}
  where we have used the fact that $g(x)g(y)k(x,y)\in\ag$ in the second inequality. Clearly, this implies \eqref{eq:bounde domain periodic convergence 3}, thus together with \eqref{eq:bounde domain periodic convergence 1} and \eqref{eq:bounde domain periodic convergence 2} we complete the proof of \eqref{eq:bounde domain periodic convergence}. 
   \end{proof}
   \begin{remark}\label{remark:bounded domain random convergence}
     Under the same replacement of assumptions as in Remark \ref{remark:random weak convergence}, but now applied to Lemma \ref{lemma:bounde domain periodic convergence}, we can prove that as $\varezero$, almost surely,
     \begin{equation*}
       \hep(x,\omega)\hep(y,\omega)\to\e{\boldsymbol h}^2\quad \text{weakly in}\hsp \ag.  
     \end{equation*}
      Therefore, we can conclude that the limit,
     \begin{equation*}
       \lim_{\varezero}\iint_{G}\hep(x,\omega)\hep(y,\omega)\paren{\gep(x)-\gep(y)}^2k(x,y)\rmd x\rmd y=
     \e{\boldsymbol h}^2\iint_{G}\paren{g(x)-g(y)}^2k(x,y)\rmd x\rmd y,
     \end{equation*}
     holds almost surely.
   \end{remark} 
    
 \section{Homogenization for product-type oscillating coefficients}
 In this section, we present the  proof of Theorem \ref{thm:periodic product structure} using the Gamma-convergence framework.  As for Theorem \ref{thm:random product structure}, by applying Remark \ref{remark:random weak convergence} in place of Lemma \ref{lemma:periodic weak convergence},and Remark \ref{remark:bounded domain random convergence} in place of Lemma \ref{lemma:bounde domain periodic convergence}, we systematically substitute $\mean{\mu/\lambda}$ by $\e{\boldsymbol\mu/\boldsymbol\lambda}$ and $\mean{\mu}$ by $\e{\boldsymbol\mu}$, simultaneously interpreting all limits in the almost sure sense. We therefore omit the details.
 \begin{proof}[Proof of Theorem \ref{thm:periodic product structure}]
    For $u\in\ard$, we define the functional:
     \[
     \fep(u)=
   \begin{cases}
    (-\lvar u,u)_{\ardnu}+m(u,u)_{\ardnu}-2(f,u)_{\ardnu} & \quad\text{if}\hsp u\in\hnorm, \\
     +\infty & \quad \text{if}\hsp u\in\ard\setminus\hnorm.
   \end{cases}
   \]
   The proof of Theorem \ref{thm:periodic product structure} is organized into three steps.
   
    \textbf{Step 1.} In this step, we will prove that for each $\varepsilon>0$, there is an equivalence between  weak solutions of  equation \eqref{eq:uep satisfies equation} and  minimizers of the functional $\fep$. 
    
    For this purpose, we first prove that the functional $\fep$ has a unique minimizer and this minimizer is indeed the solution $\uep$ of \eqref{eq:uep satisfies equation}. We now claim that $\fep$ is continuous and strictly convex on $\hnorm$. Indeed, for $u$, $v\in\hnorm$, by a similar argument to Lemma \ref{lemma:equivalent norm}, we have
   \begin{equation*}
     \paren{-\lvar u, v}_{\ardnu}=\frac{1}{2}\iintrdd \paren{u(x)-u(y)}\paren{v(x)-v(y)} K(x,y)\blv(x,y)\rmd x\rmd y,
   \end{equation*} 
   which clearly implies $\paren{-\lvar u, v}_{\ardnu}=\paren{u,-\lvar v}_{\ardnu}$. Therefore, a straightforward calculation shows that for $u$, $v\in\hnorm$ and $t\in\r$,
   \begin{align*}
    \fep(u+tv)-\fep(u)&= 2t\paren{(-\lvar u,v)_{\ardnu}+m(u,v)_{\ardnu}+(f,v)_{\ardnu}}\\
    &\quad+t^2\paren{(-\lvar v,v)_{\ardnu}+m(v,v)_{\ardnu}}.
   \end{align*} 
   It implies that $\fep$ is continuous on $\hnorm$, due to the fact that
   \begin{equation*}
      \abs{\paren{-\lvar u, v}_{\ardnu}}\lesssim[u]_{\hnorm}[v]_{\hnorm}<\infty,
     \end{equation*} 
  which is an immediate consequence of Remark \ref{remark:equi norm product form} and H\"older's inequality.
   Moreover, for $u$, $v\in\hnorm$ with $u\neq v$ and  $t\in(0,1)$, in view of Lemma \ref{lemma:equivalent norm}, we have
   \begin{equation*}
   \begin{aligned}
      &\fep(tu+(1-t)v)-t\fep(u)-(1-t)\fep(v)\\
      & =t(t-1)\paren{(-\lvar(u-v),u-v)_{\ardnu}+m(u-v,u-v)_{\ardnu}}\\
      & <0,
      \end{aligned}
   \end{equation*}
   which implies that $\fep$ is strictly convex on $\hnorm$. 
   We proceed to show that $\fep$ is coercive in the sense that if $\norm{u}_{\hnorm}\rightarrow+\infty$, then $\fep(u)\rightarrow+\infty$. 
    According to Remark \ref{remark:equi norm product form}, the uniform positivity of $\nuep$, and Young's inequality, we  deduce from the definition of $\fep$ that
    \begin{equation}\label{eq:fep is coercive}
    \begin{aligned}
 \fep(u)&\geq C_3[u]_{\hnorm}^2+C_4\norm{u}_{\ard}^2-C_5\norm{f}_{\ard}^2\\
      &\geq \min\{C_3/2, C_4/2\} \norm{u}_{\hnorm}^2-C_5\norm{f}_{\ard}^2,
      \end{aligned}
    \end{equation}
    where the constants $C_3$, $C_4$, $C_5$ are positive and independent of $u$ and $\varepsilon$.
    This implies the coercivity of $\fep$. 
    Therefore, by a standard argument (see \cite[Corollary 3.23]{MR2759829} or \cite[Chapter 2, Proposition 1.2]{MR1727362}) on the existence and uniqueness of minimizers for functionals on reflexive Banach spaces,  we conclude from 
     the continuity, the strict convexity, and the coercivity of $\fep$ that for each $\varepsilon>0$, the functional $\fep$ admits its minimum at a unique point $\tilde{u}^\varepsilon\in\hnorm$.  
     
     Our next goal is to prove that $\tilde{u}^\varepsilon$ satisfies  equation \eqref{eq:uep satisfies equation}. This derivation is standard. Since  $\tilde{u}^\varepsilon$ is the unique minimizer of $\fep$, it satisfies that
   \begin{equation*}
     \frac{\rmd}{\rmd t}\fep(\tilde{u}^\varepsilon+tv)\Big|_{t=0}=0\quad \text{for all}\quad v\in\hnorm.
   \end{equation*}
   By the definition of $\fep$, this is equivalent to the following:
   \begin{equation*}
     m\paren{\tilde{u}^\varepsilon,v}_{\ardnu}-\paren{\lvar\tilde{u}^\varepsilon,v}_{\ardnu}=
     \paren{f,v}_{\ardnu}\quad \text{for all}\quad v\in\hnorm,
   \end{equation*}
   which implies that $\tilde{u}^\varepsilon$ is a solution of \eqref{eq:uep satisfies equation}.
   
  On the other hand, let $\uep\in \hnorm$ be a weak solution of  equation \eqref{eq:uep satisfies equation}.  For any fixed $v\in \hnorm$, testing \eqref{eq:uep satisfies equation} against $\uep-v$ and applying Young's inequality, it is straightforward to  derive $\fep(\uep)\leq\fep(v)$, which implies that $\uep$ is a minimizer of $\fep$.  
 
 \textbf{Step 2.} Next, we prove that, with respect to the $\ardloc\cap\ardw$ topology, $\fep$ $\Gamma$-converges to the functional $F^0(u)$ defined on $\ard$:
  \[
     F^0(u)=
   \begin{dcases}
    \frac{1}{2}\mean{\mu}^2(u,u)_K+\mean{\mu/\lambda}\intrd\paren{m\paren{u(x)}^2-2f(x)u(x)}\rmd x   &\quad\text{if}\hsp u\in\hnorm, \\
     +\infty &\quad\text{if}\hsp u\in\ard\setminus\hnorm.
   \end{dcases}
   \]
   Here, $\ardw$ is denoted by the space of square integrable functions equipped with the weak convergence topology, and  $(u,u)_K$ is defined in \eqref{eq:equai norm}.
   
   According to the definition of $\Gamma$-convergence, we need to prove the following: 
   \begin{itemize}
     \item $\Gamma$-$\liminf$ inequality: For all $v\in\ard$, and every sequence $\{\vep\}_{\varepsilon>0}\subset\ard$  converging to $v$ in $\ardloc\cap\ardw$ topology, it holds that
         \begin{equation}\label{eq:gamma liminf}
           F^0(v)\leq\liminf_{\varezero}\fep(\vep);
         \end{equation}
     \item $\Gamma$-$\limsup$ inequality: For all $v\in\ard$, there exists a sequence  $\{\vep\}_{\varepsilon>0}\subset\ard$  converging to $v$ in $\ardloc\cap\ardw$ topology, such that
         \begin{equation}\label{eq:gamma limsup}
           F^0(v)\geq\limsup_{\varezero}\fep(\vep).
         \end{equation}
   \end{itemize}
   We first prove the $\Gamma$-$\liminf$ inequality \eqref{eq:gamma liminf}. If $v\in\ard\setminus\hnorm$, for the sequence $\{\vep\}_{\varepsilon>0}$, we claim that 
   \begin{equation}\label{eq:vep hnorm liminf is infty}
     \liminf_{\varezero}\, \norm{\vep}_{\hnorm}=+\infty.
   \end{equation}
   In fact, if \eqref{eq:vep hnorm liminf is infty} is false, then there exists a subsequence
   $\{v^{\varepsilon_i}\}_{i=1}^\infty\subset\{v^\varepsilon\}_{\varepsilon>0}$ and a uniform constant $C>0$ such that $\norm{v^{\varepsilon_i}}_{\hnorm}\leq C$. Therefore, up to extracting a subsequence of $\{v^{\varepsilon_i}\}_{i=1}^\infty$ (without relabeling), we have $v^{\varepsilon_i}\rightarrow \tilde{v}$ weakly in $\hnorm$ (thus also in $\ard$) for some $\tilde{v}\in\hnorm$ as $i\rightarrow\infty$.
   Recall that $\vep$ converges weakly to $v$ in $\ard$. From the uniqueness of the weak limit, we deduce that $\tilde{v}=v$. Due to the weak lower semicontinuity of the $\hnorm$ norm, it follows that
   \begin{equation*}
     \norm{v}_{\hnorm}\leq\liminf_{i\rightarrow\infty}\,\norm{v^{\varepsilon_i}}_{\hnorm}\leq C,
   \end{equation*} 
   which contradicts our assumption that $v\in\ard\setminus\hnorm$. In view of 
   \eqref{eq:fep is coercive} and \eqref{eq:vep hnorm liminf is infty}, we obtain
   \begin{equation*}
     \liminf_{\varezero}\fep(\vep)\gtrsim\liminf_{\varezero}\,\norm{\vep}_{\hnorm}-
     C\norm{f}_{\ard}=+\infty=F^0(v).
   \end{equation*}
   We now turn to the case when $v\in\hnorm$. By the definitions of $\fep$ and $F^0$, it suffices to prove the following relations:
   \begin{equation}\label{eq:convergence of term f v}
     \lim_{\varezero}\,(f,\vep)_{\ardnu}=\mean{\mu/\lambda}(f,v),  
     \end{equation}
   \begin{equation}\label{eq:convergence of term v v}
     \liminf_{\varezero}\,(\vep,\vep)_{\ardnu}\geq\mean{\mu/\lambda}(v,v),  
     \end{equation} 
     \begin{equation}\label{eq:convergence of term main}
     \begin{aligned}
      \liminf_{\varezero}&\,\iintrdd\muep(x)\muep(y)\paren{\vep(x)-\vep(y)}^2K(x,y)\rmd y\rmd x\\
     &\geq \,\mean{\mu}^2\iintrdd\paren{v(x)-v(y)}^2K(x,y)\rmd y\rmd x.
      \end{aligned}
     \end{equation} 
     Recall that we have assumed that $\vep\rightarrow v$ in $\ardloc$ and that $f\in\ard$. Hence, \eqref{eq:convergence of term f v} follows directly from Lemma \ref{lemma:periodic weak convergence}. For the same reason, we see that
     \begin{equation*}
   \lim_{\varezero}\,(v,v)_{\ardnu}=\mean{\mu/\lambda}(v,v)=\lim_{\varezero}\,(v,\vep)_{\ardnu}.
     \end{equation*}
     Therefore,
     \begin{align*}
       0&\leq  \liminf_{\varezero}\,(v-\vep,v-\vep)_{\ardnu} \\
      & = \liminf_{\varezero}\,\paren{(\vep,\vep)_{\ardnu}+(v,v)_{\ardnu}-2(v,\vep)_{\ardnu}}\\
      & = \liminf_{\varezero}\,(\vep,\vep)_{\ardnu}-\mean{\mu/\lambda}(v,v),
     \end{align*}
      which implies \eqref{eq:convergence of term v v}.
      
  In order to prove \eqref{eq:convergence of term main}, we divide $\rd\times\rd$ into three subsets: $G_1^\delta$, $G_2^\delta$, and $G_3^\delta$, which are defined by the following:
   \begin{gather}
     G_1^\delta = \baren{(x,y)\in\rd\times\rd : K(x,y)\leq\delta^{-(d+\alpha)}, \abs{x}+\abs{y}<1/\delta}\label{eq:G1 delta}, \\
     G_2^\delta = \baren{(x,y)\in\rd\times\rd : K(x,y)>\delta^{-(d+\alpha)}, \abs{x}+\abs{y}<1/\delta}\label{eq:G2 delta},\\
     G_3^\delta = \baren{(x,y)\in\rd\times\rd : \abs{x}+\abs{y}\geq1/\delta}\label{eq:G3 delta}.
   \end{gather}
  It follows from the assumption \eqref{B2} that $K(x,y)<\infty$ for a.e. $(x,y)\in\rd\times\rd$. We thus derive 
  \begin{equation}\label{eq:G2G3 tend 0}
    \mathbbm{1}_{G_2^\delta\cup G_3^\delta}(x,y)\to 0\hsp\text{as}\; \delta\to0
  \end{equation}
   for a.e. $(x,y)\in\rd\times\rd$.
  For the kernel $K$ in Case 1, in view of \eqref{A3}, \eqref{eq:G2G3 tend 0}  follows by an entirely similar argument, and we therefore omit the details. 
  Hence, we conclude from  the dominated convergence theorem, $v\in\hnorm$, and \eqref{eq:equai norm} that for any $\kappa>0$, there exists $\delta>0$ such that
  \begin{equation}\label{eq:G2,G3small integral}
    \mean{\mu}^2\iint_{G_2^\delta\cup G_3^\delta}\paren{v(x)-v(y)}^2K(x,y)\rmd y\rmd x\leq\kappa.
  \end{equation}
  Consequently,
  \begin{equation}\label{eq:G2,G3 integral compare}
  \begin{aligned}
    \liminf_{\varezero}&\iint_{G_2^\delta\cup G_3^\delta}\muep(x)\muep(y)\paren{\vep(x)-\vep(y)}^2K(x,y)\rmd y\rmd x \\
   &\geq0 \geq\mean{\mu}^2\iint_{G_2^\delta\cup G_3^\delta}\paren{v(x)-v(y)}^2K(x,y)\rmd y\rmd x-\kappa.
    \end{aligned}
  \end{equation}
  Note that for $(x,y)\in G_1^\delta$, we have $K(x,y)\in L^\infty$. Therefore,
   we  deduce directly from Lemma \ref{lemma:bounde domain periodic convergence} that
  \begin{equation}\label{eq:G1 integral compare}
    \lim_{\varezero}\iint_{G_1^\delta}\muep(x)\muep(y)\paren{\vep(x)-\vep(y)}^2K(x,y)\rmd x\rmd y=\mean{\mu}^2\iint_{G_1^\delta}\paren{v(x)-v(y)}^2K(x,y)\rmd x\rmd y.
  \end{equation}
  By the arbitrariness of $\kappa$, combining \eqref{eq:G1 integral compare} with \eqref{eq:G2,G3 integral compare} gives the desired formula \eqref{eq:convergence of term main}. Hence, we complete the proof of the 
  $\Gamma$-$\liminf$ inequality \eqref{eq:gamma liminf}. 
  
  We  now proceed analogously to the proof of the $\Gamma$-$\limsup$ inequality \eqref{eq:gamma limsup}. For any $v\in\ard$, we choose the sequence functions $\vep\equiv v$. 
  If $v\in\ard\setminus\hnorm$, obviously $F^0(v)=\limsup_{\varezero}\fep(\vep)=+\infty$.
  If $v\in\hnorm$, we can respectively replace \eqref{eq:convergence of term f v}, \eqref{eq:convergence of term v v}, \eqref{eq:G2,G3 integral compare}, and \eqref{eq:G1 integral compare} with 
  \begin{equation*}
     \lim_{\varezero}\,(f,v)_{\ardnu}=\mean{\mu/\lambda}(f,v),  
     \end{equation*}
   \begin{equation*}
     \lim_{\varezero}\,(v,v)_{\ardnu}=\mean{\mu/\lambda}(v,v),  
     \end{equation*} 
    \begin{align*}
    \limsup_{\varezero}&\iint_{G_2^\delta\cup G_3^\delta}\muep(x)\muep(y)\paren{v(x)-v(y)}^2K(x,y)\rmd y\rmd x \\
   &\leq\kappa \leq\mean{\mu}^2\iint_{G_2^\delta\cup G_3^\delta}\paren{v(x)-v(y)}^2K(x,y)\rmd y\rmd x+\kappa,
    \end{align*}
 and
  \begin{equation*}
    \lim_{\varezero}\iint_{G_1^\delta}\muep(x)\muep(y)\paren{v(x)-v(y)}^2K(x,y)\rmd x\rmd y=\mean{\mu}^2\iint_{G_1^\delta}\paren{v(x)-v(y)}^2K(x,y)\rmd x\rmd y.
  \end{equation*}
  By the definitions of $\fep$ and $F^0$, putting together these four relations yields the $\Gamma$-$\limsup$ inequality \eqref{eq:gamma limsup}.
  
  \textbf{Step 3.} Similar to the argument in \textbf{Step 1} establishing  equivalence between  weak solutions of  equation \eqref{eq:uep satisfies equation} and  minimizers of the functional $\fep$, we can show that the functional $F^0$ also admits a unique minimizer $u^0$, which coincides with the solution of the limit equation 
   \eqref{eq:u0 satisfies equation}. From \cite[Theorem 1.21]{MR1968440}, we deduce that any limit point of $\{\uep\}_{\varepsilon>0}$ is $u^0$, and thus we have $\uep\rightarrow u^0$ weakly in $\ard$ and strongly in $\ardloc$ as $\varezero$. By Corollary \ref{corollary:a prior estimate}, we have $\norm{\uep}_{\hnorm}\leq C$ with a constant $C$ independent of $\varepsilon$. Therefore, up to extracting a subsequence  $\{\uepi\}_{i=1}^\infty$, we have $\uepi\to \tilde{u}$ weakly in $\hnorm$   (thus also in $\ard$) for some $\tilde{u}\in\hnorm$ as $i\to\infty$. From the uniqueness of the weak limit, we deduce that $\tilde{u}=u^0$. Hence, we conclude that $\uep\rightarrow u^0$ weakly in $\hnorm$ as $\varezero$. 
   
   We are left with the task of proving that $\uep\rightarrow u^0$ strongly in $\ard$ as $\varezero$.
We argue by contradiction. If false, there exists some constant $\tilde{C}>0$ and a subsequence
$\{\uepi\}_{i=1}^\infty$ such that $\norm{u^{\varepsilon_i}-u}_{\ard}\geq\tilde{C}$.
From the fact that $\uep\rightarrow u^0$ strongly in $\ardloc$ as $\varezero$, it follows that
for any $n\in\mathbb{N}^*$, there exists $\varepsilon(n)>0$ such that $\norm{\uep-u}_{L^2(B_n)}\leq\tilde{C}/2$ for any $\varepsilon<\varepsilon(n)$. For $\tilde{C}$ above, from the fact that $u\in\ard$, we can deduce  that there exists $N(\tilde{C})>0$ such that $\norm{u}_{L^2(\rd\setminus B_n)}\leq\tilde{C}/2$ for all $n>N$. Therefore, we  show that for all $\varepsilon_i<\varepsilon(N)$ and $n>N$,
\begin{align*}
 \norm{\uepi}_{L^2(\rd\setminus B_n)} &\geq\norm{\uepi-u}_{L^2(\rd\setminus B_n)}-\norm{u}_{L^2(\rd\setminus B_n)}\\
 &=\paren{\norm{\uepi-u}_{\ard}^2-\norm{\uepi-u}_{L^2(B_n)}^2}^{1/2}
 -\norm{u}_{L^2(\rd\setminus B_n)}\\
 &\geq\tilde{C}/3.
\end{align*} 
Observe that  $\norm{\uepi}_{L^2(\rd\setminus B_n)}$ is decreasing in $n$. Hence, we deduce that there exists some constant $C>0$ and a subsequence
$\{\uepi\}_{i=1}^\infty$ such that for any $n\in\mathbb{N}^*$ and $\varepsilon_i<\varepsilon(n)$, $\norm{\uepi}_{L^2(\rd\setminus B_n)}\geq C$.
This leads to 
\begin{align*}
    m(\uepi,\uepi)_{\ardnui}-m(\psi_n\uepi,\psi_n\uepi)_{\ardnui}
  &=m\int_{\rd\setminus B_{n/2}}\paren{1-\psi_n^2}\nuepi\paren{\uepi}^2 \\
  & \geq m\int_{\rd\setminus B_n}\nuepi\paren{\uepi}^2\\
  &\geq C,
\end{align*}
where the positive constant $C$ is independent of $n$ and  $\psi_n(x):=\psi(x/n)$ is the scaling of a standard smooth cutoff function $\psi$ supported in $B_1$ with $\psi\equiv1$ on $B_{1/2}$.
According to the definition of $\fep$, if we can establish the following two limits uniformly in $\varepsilon$:
\begin{gather}
  \lim_{n\rightarrow\infty}\abs{\paren{f,\psi_n\uep}_{\ardnu}-\paren{f,\uep}_{\ardnu}}=0,\label{eq:psi n limit 1}\\
 \lim_{n\rightarrow\infty}\abs{\paren{-\lvar\paren{\psi_n\uep},\psi_n\uep}_{\ardnu}
 -\paren{-\lvar\uep,\uep}_{\ardnu}}=0,
\label{eq:psi n limit 2}
  \end{gather}
  we will get $\fep(\psi_n\uep)<\fep(\uep)$ for sufficiently large $n$ and sufficiently small $\varepsilon$, which contradicts the fact that $\uep$ is the unique minimizer of $\fep$ for all $\varepsilon>0$. 
  \eqref{eq:psi n   limit 1} is an immediate consequence of the Lebesgue dominated convergence
theorem and H\"older's inequality due to the fact that $\uep$ is bounded uniformly in $\ard$ and that $\psi_n-1\rightarrow0$ pointwise as $n\rightarrow\infty$. 
We are left with proving the limit \eqref{eq:psi n limit 2}. In view of Lemma \ref{lemma:equivalent norm} and Remark \ref{remark:equi norm product form}, we obtain
\begin{equation}\label{eq:estimate two qep minus}
  \begin{aligned}
  &\abs{\paren{-\lvar\paren{\psi_n\uep},\psi_n\uep}_{\ardnu}
 -\paren{-\lvar\uep,\uep}_{\ardnu}}\\
  &=\abs{\paren{-\lvar\paren{\paren{\psi_n+1}\uep},\paren{\psi_n-1}\uep}_{\ardnu}}\\
  &\lesssim\maren{\paren{\psi_n+1}\uep}_{\hnorm}\maren{\paren{\psi_n-1}\uep}_{\hnorm}.
\end{aligned}
\end{equation}
By the definition of the seminorm in $\hnorm$, the triangle inequality, and  $\psi_n\in\ccinfty$, it follows that
\begin{equation}\label{eq:two qep minus term1}
  \maren{\paren{\psi_n+1}\uep}_{\hnorm}^2\lesssim\maren{\uep}_{\hnorm}^2+
  \iintrdd\frac{\abs{\psi_n(x)-\psi_n(y)}^2\abs{\uep(y)}^2}{\abs{x-y}^{d+\alpha}}\rmd x\rmd y.
\end{equation}
We estimate the last integral by splitting $\rd\times\rd$ into two regions: 
\begin{equation*}
  \{(x,y)\in\rd\times\rd : |x - y| \leq 1 \}\quad\text{and}\quad
 \{(x,y)\in\rd\times\rd : |x - y| \geq 1 \}.
 \end{equation*}
With respect to the estimate on the first region, using the fact that $\abs{\psi_n(x)-\psi_n(y)}\lesssim\abs{x-y}/n$, for $n>1$, we have
\begin{equation}\label{eq:two qep minus term1 term1}
  \begin{aligned}
  \intrd\int_{\abs{x-y}\leq1}\frac{\abs{\psi_n(x)-\psi_n(y)}^2\abs{\uep(y)}^2}{\abs{x-y}^{d+\alpha}}\rmd x\rmd y
  &\lesssim\frac{1}{n^2}\intrd\int_{\abs{x-y}\leq1}\frac{\abs{\uep(y)}^2}{\abs{x-y}^{d+\alpha-2}}
  \rmd x\rmd y\\
  &\lesssim\frac{1}{n^2}\intrd\abs{\uep(y)}^2\rmd y\int_{\abs{z}\leq1}\frac{1}{\abs{z}^{d+\alpha-2}}\rmd z\\
   &\lesssim\norm{\uep}_{\ard}^2.
   \end{aligned}
\end{equation}
 For $x$, $y\in\rd$ satisfying 
$\abs{x-y}\geq1$, , it follows from  $\abs{\psi_n}\leq1$ that
\begin{equation}\label{eq:two qep minus term1 term2}
  \begin{aligned}
  \intrd\int_{\abs{x-y}\geq1}\frac{\abs{\psi_n(x)-\psi_n(y)}^2\abs{\uep(y)}^2}{\abs{x-y}^{d+\alpha}}\rmd x\rmd y
  &\lesssim\intrd\int_{\abs{x-y}\geq1}\frac{\abs{\uep(y)}^2}{\abs{x-y}^{d+\alpha}}\rmd x\rmd y\\
  &\lesssim\intrd\abs{\uep(y)}^2\rmd y\int_{\abs{z}\geq1}\frac{1}{\abs{z}^{d+\alpha}}\rmd z\\
   &\lesssim\norm{\uep}_{\ard}^2.
   \end{aligned}
\end{equation}
  In view of \eqref{eq:two qep minus term1}--\eqref{eq:two qep minus term1 term2}, we obtain
  \begin{equation}\label{eq:two qep minus term1 estimate}
    \maren{\paren{\psi_n+1}\uep}_{\hnorm}\lesssim\norm{\uep}_{\hnorm}\leq C.
  \end{equation}
 Set $\rho_n:=\psi_n-1$, then $\rho_n$ tends to zero pointwise as $n\to\infty$ . In light of 
  \eqref{eq:estimate two qep minus} and \eqref{eq:two qep minus term1 estimate}, to establish the limit \eqref{eq:psi n limit 2}, it is sufficient to prove that the relation,
  \begin{equation}\label{eq:fn uep seminorm to 0}
    \lim_{n\rightarrow\infty}\maren{\rho_n\uep}_{\hnorm}=0,
  \end{equation}
  holds uniformly in $\varepsilon$. By the triangle inequality, it is easy to get
  \begin{equation*}
    \maren{\rho_n\uep}_{\hnorm}^2\lesssim
    \iintrdd\frac{\abs{\uep(x)-\uep(y)}^2\abs{\rho_n(x)}^2}{\abs{x-y}^{d+\alpha}}\rmd x\rmd y+\iintrdd\frac{\abs{\rho_n(x)-\rho_n(y)}^2\abs{\uep(y)}^2}{\abs{x-y}^{d+\alpha}}\rmd x\rmd y.
  \end{equation*}
  By the Lebesgue dominated convergence theorem and the fact that $\norm{\uep}_{\hnorm}\leq C$, it follows that
   \begin{equation}\label{eq:fn 1}
     \lim_{n\rightarrow\infty}\iintrdd\frac{\abs{\uep(x)-\uep(y)}^2\abs{\rho_n(x)}^2}{\abs{x-y}^{d+\alpha}}\rmd x\rmd y=0.
   \end{equation}
  Similar to \eqref{eq:two qep minus term1 term1} and \eqref{eq:two qep minus term1 term2}, we deduce that as $n\rightarrow\infty$,
  \begin{equation}\label{eq:fn 2 outside ball}
    \int_{\rd\setminus B_{n/2}}
    \abs{\uep(y)}^2\intrd\frac{\abs{\rho_n(x)-\rho_n(y)}^2}{\abs{x-y}^{d+\alpha}}\rmd x\rmd y
    \lesssim\int_{\rd\setminus B_{n/2}}
    \abs{\uep(y)}^2\rmd y\to0,
  \end{equation}
  and
  \begin{equation}\label{eq:fn 2 inside ball}
    \begin{aligned}
    &\int_{B_{n/2}}
    \abs{\uep(y)}^2\intrd\frac{\abs{\rho_n(x)-\rho_n(y)}^2}{\abs{x-y}^{d+\alpha}}\rmd x\rmd y\\
    &\lesssim\int_{B_{n/2}}\abs{\uep(y)}^2\paren{\int_{\rd\setminus B_n}\frac{1}{\abs{x-y}^{d+\alpha}}\rmd x
    +\frac{1}{n^2}\int_{B_n\setminus B_{n/2}}\frac{1}{\abs{x-y}^{d+\alpha-2}}\rmd x}\rmd y\\
    &\lesssim\int_{B_{n/2}}\abs{\uep(y)}^2\paren{\int_{\rd\setminus B_{n/2}}\frac{1}{\abs{z}^{d+\alpha}}\rmd z
    +\frac{1}{n^2}\int_{B_{3n/2}}\frac{1}{\abs{z}^{d+\alpha-2}}\rmd z}\rmd y\\
    &\lesssim \frac{1}{n^\alpha}\int_{B_{n/2}}\abs{\uep(y)}^2\rmd y
    \lesssim\frac{1}{n^\alpha}\norm{\uep}_{\ard}^2\to0.
    \end{aligned}
  \end{equation}
  Combining \eqref{eq:fn 1}--\eqref{eq:fn 2 inside ball}, we obtain the desired \eqref{eq:fn uep seminorm to 0}, which completes the proof of Theorem \ref{thm:periodic product structure}.
 \end{proof}
 \section{Homogenization for symmetric oscillating coefficients}
 In this section, we present the  proof of Theorem \ref{thm:periodic sym structure} by the compactness argument. For the proof of Theorem \ref{thm:random sym structure}, it suffices to replace \cite[Lemma 3.1]{MR2085184} with \cite[Lemma 3.1]{MR4000213} in the proof of Theorem \ref{thm:periodic sym structure} below, simultaneously interpreting all limits in the almost sure sense. Therefore, we omit the details.
 \begin{proof}[Proof of Theorem \ref{thm:periodic sym structure}]
 According to Corollary \ref{corollary:a prior estimate}, we obtain the uniform estimate
 	\begin{equation}\label{eq:10}
 		\|u^{\varepsilon}\|_{\hnorm}\lesssim \|f\|_{L^{2}\left(\mathbb{R}^{d}\right)}\lesssim C,
 	\end{equation}
 	where the constant $C$ does not depend on $\varepsilon$. By the weak compactness  and the compact embedding theorem for fractional Sobolev space  \cite[Theorem 6.13]{MR4567945}, there exists a subsequence, still denoted by $\{u^{\varepsilon}\}_{\varepsilon>0}$, and a function $\tilde{u}\in \hnorm$ such that 
 	\begin{equation*}
 		u^{\varepsilon} \rightarrow \tilde{u} \quad{\rm weakly \;in\;} \hnorm
 	\end{equation*}	
 and
 	\begin{equation*}
 		u^{\varepsilon} \rightarrow \tilde{u} \quad{\rm strongly \;in\;} \ardloc
 	\end{equation*}	
 	as $\varepsilon \rightarrow 0 $.
 	
 	Next, we aim to find the equation satisfied by the limit function $\tilde{u}$. To this aim,  testing the equation $-L^{\varepsilon}u^{\varepsilon}+mu^{\varepsilon}=f$ against $\varphi\in C_{c}^{\infty}\left(\mathbb{R}^{d}\right)$, we obtain
 	\begin{align}\label{130}
 		&\frac{1}{2}\iintrdd\left(u^{\varepsilon}(y)-u^{\varepsilon}(x)\right)
 \left(\varphi(y)-\varphi(x)\right)K(x,y)\Lambda^{\varepsilon}(x,y)\rmd y\rmd x+\int_{\mathbb R^d}\left(mu^{\varepsilon}\varphi-f\varphi\right)
 		\rmd x=0.
 	\end{align}
 	Since $u^{\varepsilon} \rightarrow \tilde{u}$ weakly in $L^{2}\left(\mathbb{R}^{d}\right)$ as $\varepsilon\rightarrow 0$, it follows that 
 	\begin{equation}\label{43}
 		\lim\limits_{\varepsilon\rightarrow 0}\int_{\mathbb R^d}u^{\varepsilon}\varphi\rmd x=\int_{\mathbb R^d}\tilde{u}\varphi\rmd x.
 	\end{equation} 
 	For the first term on the left hand side of \eqref{130}, we claim that as $\varepsilon\rightarrow 0$,
 \begin{equation}\label{44}
 	\begin{aligned}	
 	&\iintrdd\left(u^{\varepsilon}(y)-u^{\varepsilon}(x)\right)\left(\varphi(y)-\varphi(x)\right)
 K(x,y)\Lambda^{\varepsilon}(x,y)\rmd y\rmd x\\
 	\rightarrow&\iintrdd\left(\tilde{u}(y)-\tilde{u}(x)\right)\left(\varphi(y)-\varphi(x)\right)
 K(x,y)\overline{\Lambda}(x,y)\rmd y\rmd x.
 	\end{aligned}	
  \end{equation}		
 	Once we have proved  \eqref{44}, combining \eqref{130}--\eqref{44} yields that 
 	\begin{align*}
 		&\frac{1}{2}\iintrdd\left(\tilde{u}(y)-\tilde{u}(x)\right)\left(\varphi(y)-
 \varphi(x)\right)K(x,y)\overline{\Lambda}(x,y)\rmd y\rmd x\\&+\int_{\mathbb R^d}\left(mu^{0}\varphi-f\varphi\right)\rmd x=0,\quad \forall \varphi\in C_{c}^{\infty}\left(\mathbb{R}^{d}\right).
 	\end{align*}
 Since $C_{c}^{\infty}\left(\mathbb{R}^{d}\right)$ is dense in $\hnorm$, this implies that $\tilde{u}$ is a solution of $-L^{0}u^0+mu^{0}=f$.
 	By the uniqueness of solution of this equation, we conclude that $u^{\varepsilon}$ converges to $u^0$ weakly in $\hnorm$ and strongly in $\ardloc$ as $\varepsilon\rightarrow 0$.
 	
 	In order to show the \eqref{44}, we divide the integration area $\mathbb{R}^{d}\times \mathbb{R}^{d}$ into three parts in the same way as previously defined in \eqref{eq:G1 delta}--\eqref{eq:G3 delta}.
 We first deal with the integral over $G_{2}^{\delta}\cup G_{3}^{\delta}$. By virtue of \eqref{eq:s3}, H\"{o}lder's inequality, Lemma \ref{lemma:equivalent norm}, and \eqref{eq:10}, we have
  \begin{equation}\label{45}
 	\begin{aligned}	
 		&\left|\iint_{G_{2}^{\delta}\cup G_{3}^{\delta}}\left(u^{\varepsilon}(y)-u^{\varepsilon}(x)\right)
 \left(\varphi(y)-\varphi(x)\right)K(x,y)\Lambda^{\varepsilon}(x,y)\rmd y\rmd x\right|\\
 		&\lesssim [u^{\varepsilon}]_{\hnorm}[\varphi]_{H^{\alpha/2}(G_{2}^{\delta}\cup G_{3}^{\delta})}\lesssim [\varphi]_{H^{\alpha/2}(G_{2}^{\delta}\cup G_{3}^{\delta})}.
 	\end{aligned}	
 \end{equation}		
 	Similar to \eqref{eq:G2,G3small integral}, we have
 	\begin{equation}\label{46}
 		[\varphi]_{H^{\alpha/2}(G_{2}^{\delta}\cup G_{3}^{\delta})}\rightarrow0, \quad {\rm as\;}\delta\rightarrow0.
 	\end{equation}
 	Combining (\ref{45}) with (\ref{46}), we derive
 	\begin{equation}\label{47}
 		\lim\limits_{\delta\rightarrow0}\iint_{G_{2}^{\delta}\cup G_{3}^{\delta}}\left(u^{\varepsilon}(y)-u^{\varepsilon}(x)\right)
 \left(\varphi(y)-\varphi(x)\right)K(x,y)\overline{\Lambda}(x,y)\rmd y\rmd x=0.
 	\end{equation}
 	Similarly, we have
 	\begin{equation}\label{48}
 		\lim\limits_{\delta\rightarrow0}\iint_{G_{2}^{\delta}\cup G_{3}^{\delta}}\left(u^{0}(y)-u^{0}(x)\right)\left(\varphi(y)-\varphi(x)\right)K(x,y)
 \overline{\Lambda}(x,y)\rmd y\rmd x=0.
 	\end{equation}
 	Next, we deal with the integral over $G_{1}^{\delta}$. Our goal is to prove that as  $\varepsilon\rightarrow 0$, 
 	  \begin{equation}\label{49}
 		\begin{aligned}	
 		I^{\varepsilon}:=&\iint_{G_{1}^{\delta}}\left(u^{\varepsilon}(y)-u^{\varepsilon}(x)\right)
 \left(\varphi(y)-\varphi(x)\right)K(x,y)\Lambda^{\varepsilon}(x,y)\rmd y\rmd x\\
 		\rightarrow&\iint_{G_{1}^{\delta}}\left(u^{0}(y)-u^{0}(x)\right)
 \left(\varphi(y)-\varphi(x)\right)K(x,y)\overline{\Lambda}(x,y)\rmd y\rmd x=:I^{0}.
 		\end{aligned}	
 	\end{equation}	
 	In fact, by the triangle inequality, we arrive at
 	\begin{equation}\label{410}
 		\begin{aligned}	
 			\left|I^{\varepsilon}-I^{0}\right|
 			\leq&\iint_{G_{1}^{\delta}}\left|\left(u^{\varepsilon}(y)-u^{\varepsilon}(x)\right)-
 \left(u^{0}(y)-u^{0}(x)\right)\right|
 			\left|\varphi(y)-\varphi(x)\right|K(x,y)\Lambda^{\varepsilon}(x,y)\rmd y\rmd x\\
 			&+\left|\iint_{G_{1}^{\delta}}\left(\Lambda^{\varepsilon}(x,y)-\overline{\Lambda}(x,y)\right)
 \left(u^{0}(y)-u^{0}(x)\right)\left(\varphi(y)-\varphi(x)\right)K(x,y)\rmd y\rmd x\right|\\
 			=:&I_{1}+I_{2}.
 		\end{aligned}	
 	\end{equation}
 	We first analyze $I_{1}$. Note that 
 	\begin{equation}\label{eq:bound}
 		K(x,y)\leq C(\delta),\quad \forall\, (x,y)\in G_{1}^{\delta}.
 	\end{equation}
 	In view of \eqref{eq:10}, \eqref{eq:bound}, and  H\"{o}lder's inequality, we obtain
 	\begin{align*}
 		\abs{I_{1}}\lesssim &\iint_{G_{1}^{\delta}}\left(\left|u^{\varepsilon}(y)-u^{0}(y)\right|+
 \left|u^{\varepsilon}(x)-u^{0}(x)\right|\right)|\varphi(y)-\varphi(x)|\rmd y\rmd x\\
 		\lesssim &\left(\int_{\{|x|\leq \delta^{-1}\}}\left|u^{\varepsilon}(x)-u^{0}(x)\right|^{2}\rmd x\right)^{1/2},
 	\end{align*}
 	which along with the fact that $u^{\varepsilon} \rightarrow u^{0}$ strongly in $\ardloc$ yields that
 	\begin{equation}\label{412}
 		I_{1}\rightarrow0, \quad {\rm as\;}\varepsilon\rightarrow0.
 	\end{equation}
 	We proceed to estimate $I_{2}$. Since $\Lambda(x,y,\xi,\eta)$ is a Carath\'eodory function, if $G_{1}^{\delta}$ is Jordan measurable,  we can apply \cite[Lemma 3.1]{MR2085184} and the fact that  $\{\Lambda^{\varepsilon}\}$ is bounded uniformly in $\varepsilon$ in $L^{2}\left(G_{1}^{\delta}\right)$ to deduce that as $\varepsilon\rightarrow 0$,
 	\begin{equation}\label{413}
 		\Lambda^{\varepsilon}\rightarrow \overline{\Lambda} \quad {\rm weakly\; in}\;  L^{2}\left(G_{1}^{\delta}\right). 
 	\end{equation}
 If $G_1^\delta$ is not Jordan measurable, we can approximate $G_1^{\delta}$ by rectangles as in Lemma \ref{lemma:bounde domain periodic convergence} to obtain \eqref{413}.
 	By the fact that $u^{0}\in \hnorm$ and that $\varphi\in C_{c}^{\infty}\left(\mathbb{R}^{d}\right)$, it is easily seen that
 	\begin{equation*}
 		K(x,y)\left(u^{0}(y)-u^{0}(x)\right)\left(\varphi(y)-\varphi(x)\right)\in L^{2}\left(G_{1}^{\delta}\right).
 	\end{equation*}
 	Consequently, applying (\ref{413}) to $I_{2}$ we obtain
 	\begin{equation}\label{414}
 		I_{2}\rightarrow0, \quad {\rm as\;}\varepsilon\rightarrow0.
 	\end{equation}
 	It follows from (\ref{410}), (\ref{412}), and (\ref{414}) that (\ref{49}) holds. Hence, by combining (\ref{47})--(\ref{49}), we have establisded the previous claim \eqref{44}.
 	
 	It remains to show that $\|u^{\varepsilon}-u^{0}\|_{L^{2}\left(\mathbb{R}^{d}\right)}\rightarrow 0$ as $\varepsilon\rightarrow 0$. We argue by contradiction. If 
 	\begin{equation}\label{eq:strict}
 		\lim\limits_{\varepsilon\rightarrow 0}\|u^{\varepsilon}-u^{0}\|_{L^{2}\left(\mathbb{R}^{d}\right)}\neq 0,
 	\end{equation}	
 	there exists a subsequence, still denoted by $\{u^{\varepsilon}\}_{\varepsilon>0}$, such that 
 	\begin{equation}\label{416}
 		\lim\limits_{\varepsilon\rightarrow 0}\|u^{\varepsilon}\|_{L^{2}\left(\mathbb{R}^{d}\right)}>\|u^{0}\|_{L^{2}\left(\mathbb{R}^{d}\right)}.
 	\end{equation}
 Indeed, since $u^{\varepsilon}$ converges to $u^{0}$ weakly in $\ard$, applying  the weak lower semicontinuity of the $\ard$ norm, we get
 \begin{equation*}
 	\liminf\limits_{\varepsilon\rightarrow 0}\|u^{\varepsilon}\|_{L^{2}\left(\mathbb{R}^{d}\right)}\geq \|u^{0}\|_{L^{2}\left(\mathbb{R}^{d}\right)}.
 \end{equation*}	
 This along with \eqref{eq:strict} directly implies \eqref{416}. On the other hand, we have 
 	\begin{align}\label{eq:positive}
 		0\leq \left(-L^{\varepsilon}(u^{\varepsilon}-u^{0}),u^{\varepsilon}-u^{0}\right)=
 \left(-L^{\varepsilon}u^{\varepsilon},u^{\varepsilon}\right)+
 2\left(L^{\varepsilon}u^{\varepsilon},u^{0}\right)-\left(L^{\varepsilon}u^{0},u^{0}\right).
 	\end{align}
 	We now claim that
 	\begin{align}\label{418}
 		&\liminf\limits_{\varepsilon\rightarrow 0}\left(-L^{\varepsilon}u^{\varepsilon},u^{\varepsilon}\right)\geq \left(-L^{0}u^{0},u^{0}\right).
 	\end{align}
 	Indeed, it follows from $-L^{\varepsilon}u^{\varepsilon}+mu^{\varepsilon}=f=-L^{0}u^{0}+mu^{0}$ that
 	\begin{align}\label{mli}
 		\lim\limits_{\varepsilon\rightarrow0}\left(L^{\varepsilon}u^{\varepsilon},u^{0}\right)=	\lim\limits_{\varepsilon\rightarrow0}\left(m(u^{\varepsilon},u^{0})-m(u^{0},u^{0})+
 \left(L^{0}u^{0},u^{0}\right)\right)=\left(L^{0}u^{0},u^{0}\right),
 	\end{align}
 	where we used the fact that $u^{\varepsilon} \rightarrow u^{0}$ weakly in $L^{2}\left(\mathbb{R}^{d}\right)$ in the last equality. We proceed to deal with the last term on the right hand side of \eqref{eq:positive}. Since $C_{c}^{\infty}\left(\mathbb{R}^{d}\right)$ is dense in $\hnorm$,  for any $\eta>0$, there exists $u_{\eta}\in C_{c}^{\infty}\left(\mathbb{R}^{d}\right)$ such that
 	\begin{align}\label{420}
 		\|u^{0}-u_{\eta}\|_{\hnorm}<\eta.
 	\end{align}
 	Applying the triangle inequality and H\"{o}lder's inequality , in combination with Lemma \ref{lemma:equivalent norm}, \eqref{44}, and \eqref{420}, we derive
 	\begin{equation}\label{fli}
 		\begin{aligned}	
 \limsup_{\varepsilon\rightarrow0}\left|\left(L^{\varepsilon}u^{0},u^{0}\right)-
 \left(L^{0}u^{0},u^{0}\right)\right|
 &\leq \limsup\limits_{\varepsilon\rightarrow0}\left(\left|\left(L^{\varepsilon}u^{0},u^{0}-
 u_{\eta}\right)\right|+
 \left|\left(L^{\varepsilon}u^{0},u_{\eta}\right)-\left(L^{0}u^{0},u^{0}\right)\right|\right)\\
 &\lesssim [u^{0}]_{\hnorm}[u^{0}-u_{\eta}]_{\hnorm}+\left|\left(L^{0}u^{0},u_{\eta}\right)-
 \left(L^{0}u^{0},u^{0}\right)\right|\\
 &\lesssim[u^{0}]_{\hnorm}[u^{0}-u_{\eta}]_{\hnorm}\lesssim \eta.
 		\end{aligned}	
 	\end{equation}
 	By the arbitrariness of $\eta>0$, \eqref{418} is an immediate consequence of \eqref{eq:positive}, \eqref{mli}, and \eqref{fli}. Combining \eqref{416} with \eqref{418} for the same subsequence we obtain
 	\begin{equation}\label{con}
 	\lim_{\varepsilon\rightarrow 0}
 		m\left(u^{\varepsilon},u^{\varepsilon}\right)-
 \left(L^{\varepsilon}u^{\varepsilon},u^{\varepsilon}\right)
 		>m\left(u^{0},u^{0}\right)-\left(L^{0}u^{0},u^{0}\right)=\left(f,u^{0}\right),
 	\end{equation}
 	where we have used the fact that $u^{0}$ satisfies $-L^{0}u^{0}+mu^{0}=f$ in the last equality. On the other hand, testing $-L^{\varepsilon}u^{\varepsilon}+mu^{\varepsilon}=f$ against $u^{\varepsilon}$ and passing to the limit as $\varepsilon \rightarrow 0$, we have
 	\begin{equation}\label{423}
 		\lim_{\varepsilon\rightarrow 0}
 		m\left(u^{\varepsilon},u^{\varepsilon}\right)-
 \left(L^{\varepsilon}u^{\varepsilon},u^{\varepsilon}\right)
 		=\lim_{\varepsilon\rightarrow 0}\left(f,u^{\varepsilon}\right)=\left(f,u^{0}\right),
 	\end{equation}
 	where we have used the fact that $u^{\varepsilon}$ converges to $u^{0}$ weakly in $L^{2}\left(\mathbb{R}^{d}\right)$ and that $f\in L^{2}\left(\mathbb{R}^{d}\right)$ in the last equality.
 	This contradicts (\ref{con}). Thus, we conclude that
 	$u^{\varepsilon} \rightarrow u^{0}$ strongly in $L^{2}\left(\mathbb{R}^{d}\right)$ as $\varepsilon \rightarrow 0 $, which completes the proof of Theorem \ref{thm:periodic sym structure}.
 \end{proof}

 \section{Homogenization of nonlinear nonlocal problems}
 In this section, we focus on proving Theorems \ref{thm1} and \ref{thm:nonlinear random}. Following the approach used in the homogenization of the linear problems, we present only the proof for the case of periodic symmetric coefficient. The proofs for the remaining cases can be obtained by straightforward adaptations, and are therefore omitted. 
  \begin{proof}[Proof of Theorem \ref{thm1}]	
 	\textbf{Step 1.} We prove the existence and uniqueness of the weak solution of (\ref{52}) by employing a direct method in the calculus of variations. For this purpose, we consider the following the functional
 	on $W^{\alpha/p,p}\left(\mathbb{R}^{d}\right)$
 	\begin{align}\label{varfun}
 		\jep(u)=\frac{1}{2}\iintrdd K(x,y)\Phi\paren{u(y)-u(x)}\Lambda^{\varepsilon}(x,y)\rmd y\rmd x+\frac{m}{p}\int_{\mathbb R^d}\abs{u}^p\rmd x-\intrd fu\rmd x.
 	\end{align}
 	According to the assumption that $\Phi\in C^1(\r)$ is strict convex, it is easily seen that the functional $\jep$ is continuous and strict convex on $\wap$.  
 In view of \eqref{1}, \eqref{eq:Phi condition}, \eqref{eq:s3}, and Young's inequality, we obtain
 \begin{equation*}
   \jep(u)\geq C(m,p, \gamma, \Upsilon_1, \Upsilon_2)\norm{u}^p_{\wap}-C(m,p)\norm{f}^{p'}_{\qrd}.
 \end{equation*}
 It clearly implies that $J$ is coercive on $\wap$.
 Therefore, for each $\varepsilon>0$,  the functional $\jep$ admits a unique minimizer $\tilde{u}^\varepsilon\in\wap$, which satisfies
 \begin{equation}\label{eq:jep minimizer}
    \begin{aligned}
 		0&=\frac{\rmd}{\rmd t}\jep(\tilde{u}^\varepsilon+tv)\Big|_{t=0}\\
 		&=\frac{1}{2}\iintrdd K(x,y)\Phi'\paren{\tilde{u}^{\varepsilon}(y)-\tilde{u}^{\varepsilon}(x)}
 \left(v(y)-v(x)\right)\Lambda^{\varepsilon}(x,y)\rmd y\rmd x\\
 		&\quad +m\intrd \abs{\tilde{u}^{\varepsilon}}^{p-2}\tilde{u}^{\varepsilon}v\rmd x-\intrd fv\rmd x
 	\end{aligned}
 \end{equation}
 	for all $v\in\wap$.  Observe that for any $u$, $v\in W^{\alpha/p,p}\left(\mathbb{R}^{d}\right)$, by exchanging the role of the variables and using the symmetry of $K(x,y)$ and $\Lambda^{\varepsilon}(x,y)$, together with the fact that $\Phi'$ is an odd function, we obtain
 	\begin{equation}\label{4}
	\begin{aligned}	
		&\quad-\iintrdd K(x,y)\Phi'\left(u(y)-u(x)\right)v(x)\blv(x,y)\rmd y\rmd x\\
		&=-\iintrdd K(x,y)\Phi'\left(u(x)-u(y)\right)v(y)\blv(x,y)\rmd y\rmd x\\
		&=\frac{1}{2}\iintrdd K(x,y)\Phi'\left(u(y)-u(x)\right)\left(v(y)-v(x)\right)\blv(x,y)\rmd y\rmd x.
	\end{aligned}	
\end{equation}
Hence, combining \eqref{eq:jep minimizer} with \eqref{4}  shows that $\tilde{u}^\varepsilon$ is  a weak solution of equation (\ref{52}).

 On the other hand, let $\uep\in\wap$ and $\vep\in \wap$
   be two  solutions of \eqref{52}. Testing (\ref{52}) against $\uep-\vep$ for $\uep$ and for $\vep$, and subtracting the resulting identities, we have 
 	\begin{align*}
 	  &\frac{1}{2}\iintrdd K(x,y)\big(\Phi'\paren{\uep(y)-\uep(x)}-\Phi'\paren{\vep(y)-\vep(x)}\big)\\
 &\quad\times\big(\paren{\uep(y)-\uep(x)}-\paren{\vep(y)-\vep(x)}\big)
\blv(x,y)\rmd y\rmd x\\
 		&\quad +m\intrd \big(\abs{\uep(x)}^{p-2}\uep(x)-\abs{\vep(x)}^{p-2}\vep(x)\big)\paren{\uep(x)-\vep(x)}\rmd x=0.
 	\end{align*}
 Observe that $(\Phi'(s)-\Phi'(t))(s-t)>0$ for all $s$, $t\in\r$, $s\neq t$ and $K(x,y)>0$, $\blv(x,y)>0$ for \text{a.e.} $x$, $y\in\rd$. Hence, the above equality implies $\uep=\vep$ almost everywhere. 
 
 \textbf{Step 2.} Testing equation (\ref{52}) against $u^{\varepsilon}$, by \eqref{4}, we obtain	
 \begin{equation}\label{5}
	\begin{aligned}	
		&\frac{1}{2}\iintrdd K(x,y)\Phi'\paren{\uep(y)-\uep(x)}\paren{\uep(y)-\uep(x)}
\blv(x,y)\rmd y\rmd x\\
 		&\quad +m\intrd \abs{\uep(x)}^p\rmd x=\intrd f(x)\uep(x)\rmd x.
	\end{aligned}	
\end{equation}  
 In view of \eqref{1}--\eqref{eq:Phi condition} and \eqref{eq:s1}--\eqref{eq:s3}, together with
 Young's inequality, we derive
 \begin{equation}\label{10}
 		\|u^{\varepsilon}\|_{W^{\alpha/p,p}\left(\mathbb{R}^{d}\right)}\lesssim\|f\|_{L^{p'}
 \left(\mathbb{R}^{d}\right)}^{p'/p}\leq C,
 	\end{equation}
 	where the constant $C$ does not depend on $\varepsilon$. Therefore, there exist a subsequence,  still denoted by $\{u^{\varepsilon}\}_{\varepsilon>0}$, and a function $\tilde{u}\in W^{\alpha/p,p}\left(\mathbb{R}^{d}\right)$, such that 
 	\begin{equation*}
 		u^{\varepsilon} \rightarrow \tilde{u} \quad{\rm weakly \;in\;} W^{\alpha/p,p}\left(\mathbb{R}^{d}\right)
 	\end{equation*}	
 and
 	\begin{equation*}
 		u^{\varepsilon} \rightarrow \tilde{u} \quad{\rm strongly \;in\;} \prdloc
 	\end{equation*}	
 	as $\varepsilon \rightarrow 0 $.
 	 Our aim in this step is to find the equation satisfied by the limit function $\tilde{u}$. To this aim,  testing equation (\ref{52}) against $\varphi\in C_{c}^{\infty}\left(\mathbb{R}^{d}\right)$ and using (\ref{4}), we obtain
 		\begin{equation}\label{weaks}
 		\begin{aligned}	
 			&\frac{1}{2}\iintrdd K(x,y)\Phi'(\uep(y)-\uep(x))\left(\varphi(y)-\varphi(x)\right)
 \Lambda^{\varepsilon}(x,y)\rmd y\rmd x\\
 			&+\int_{\mathbb R^d}\left(m\abs{\uep}^{p-2}\uep\varphi-f\varphi\right)\rmd x=0.
 		\end{aligned}	
 	\end{equation} 
 	For the second term on the left hand side of \eqref{weaks}, we claim that
 	\begin{equation}\label{120}
 		\lim\limits_{\varepsilon\rightarrow 0}\int_{\mathbb R^d}\abs{\uep}^{p-2}\uep\varphi\rmd x= \int_{\mathbb R^d}\abs{\tilde{u}}^{p-2}\tilde{u}\varphi\rmd x.
 	\end{equation}
 	In fact, when $1<p\leq 2$, by virtue of \eqref{eq:Phi derivation condition} and H\"{o}lder's inequality, we derive
 	\begin{align*}
 		\left|\int_{\mathbb R^d}\abs{\uep}^{p-2}\uep\varphi\rmd x- \int_{\mathbb R^d}\abs{\tilde{u}}^{p-2}\tilde{u}\varphi\rmd x\right|
 		&\lesssim\int_{\mathbb R^d}\left|u^{\varepsilon}-\tilde{u}\right|^{p-1}|\varphi|\rmd x\\
 		&\lesssim \left(\int_{\rm{supp}\; \varphi}\left|u^{\varepsilon}-\tilde{u}\right|^{p}\rmd x\right)^{1/p'}\left(\int_{\mathbb R^d}|\varphi|^{p}\rmd x\right)^{1/p},
 	\end{align*}
 	which together with the fact that $u^{\varepsilon}\rightarrow \tilde{u}$ strongly in $\prdloc$ shows \eqref{120}. When $p>2$, taking advantage of (\ref{eq:Phi derivation condition}), \eqref{10}, and H\"{o}lder's inequality, we have
 	\begin{align*}
 		&\left|\int_{\mathbb R^d}\abs{\uep}^{p-2}\uep\varphi\rmd x- \int_{\mathbb R^d}\abs{\tilde{u}}^{p-2}\tilde{u}\varphi\rmd x\right|\\
 		&\lesssim\left[\left(\int_{\mathbb R^d}|u^{\varepsilon}|^{p}\rmd x\right)^{(p-2)/p}+\left(\int_{\mathbb R^d}|\tilde{u}|^{p}\rmd x\right)^{(p-2)/p}\right]\left(\int_{\rm{supp}\; \varphi}\left|u^{\varepsilon}-\tilde{u}\right|^{p}\rmd x\right)^{1/p}\left(\int_{\mathbb R^d}|\varphi|^{p}\rmd x\right)^{1/p}\\
 		&\lesssim \left(\int_{\rm{supp}\; \varphi}\left|u^{\varepsilon}-\tilde{u}\right|^{p}\rmd x\right)^{1/p}\left(\int_{\mathbb R^d}|\varphi|^{p}\rmd x\right)^{1/p},
 	\end{align*}
 	which along with the fact that $u^{\varepsilon} \rightarrow \tilde{u}$ strongly in $\prdloc$ yields \eqref{120}.
 	
 	For the first term on the left hand side of \eqref{weaks}, we claim that as $\varepsilon\rightarrow 0$, 
 \begin{equation}\label{110}
 	\begin{aligned}	
 		&\iintrdd K(x,y)\Phi'(\uep(y)-\uep(x))\left(\varphi(y)-\varphi(x)\right)
 \Lambda^{\varepsilon}(x,y)\rmd y\rmd x\\
 	\rightarrow&\iintrdd K(x,y)\Phi'(\tilde{u}(y)-\tilde{u}(x))\left(\varphi(y)-\varphi(x)\right)
 \overline{\Lambda}(x,y)\rmd y\rmd x.
 	\end{aligned}	
 \end{equation} 	
 	Once we have proved \eqref{110}, combining (\ref{weaks})--(\ref{110}) yields that 
 	\begin{align*}
 		&\frac{1}{2}\iintrdd K(x,y)\Phi'(\tilde{u}(y)-\tilde{u}(x))
 \left(\varphi(y)-\varphi(x)\right)\overline{\Lambda}(x,y)\rmd y\rmd x\\
 		&+\int_{\mathbb R^d}\left(m\abs{\tilde{u}}^{p-2}\tilde{u}\varphi-f\varphi\right)\rmd x=0,\quad \forall \;\varphi\in C_{c}^{\infty}\left(\mathbb{R}^{d}\right).
 	\end{align*}
 	Due to the density of $C_{c}^{\infty}\left(\mathbb{R}^{d}\right)$ in $W^{\alpha/p,p}\left(\mathbb{R}^{d}\right)$, we obtain that $\tilde{u}$ is a solution of $-L^0_{\Phi}u+m\abs{u}^{p-2}u=f$.
 	By the uniqueness of solution of this equation, we conclude that $u^{\varepsilon}$ converges to $u^0$ weakly in $W^{\alpha/p,p}\left(\mathbb{R}^{d}\right)$ and strongly in $\prdloc$ as $\varepsilon\rightarrow 0$.
 	
 	In order to show the \eqref{110}, we apply a similar argument to \eqref{44}. By virtue of \eqref{eq:s3}, (\ref{1}),  \eqref {eq:Phi derivation condition}, H\"{o}lder's inequality, and (\ref{10}), we have
 	\begin{align*}
 		\left|\int_{G_{2}^{\delta}\cup G_{3}^{\delta}}K(x,y)\Phi'(\uep(y)-\uep(x))
 \left(\varphi(y)-\varphi(x)\right)
 \Lambda^{\varepsilon}(x,y)\rmd y\rmd x\right|
 		\lesssim [\varphi]_{W^{\alpha/p,p}\left(G_{2}^{\delta}\cup G_{3}^{\delta}\right)},
 	\end{align*}
 which together with that fact that  $[\varphi]_{W^{\alpha/p,p}\left(G_{2}^{\delta}\cup G_{3}^{\delta}\right)}\rightarrow0$ as $\delta\rightarrow0$ yields 
 	\begin{equation}\label{13}
 		\lim\limits_{\delta\rightarrow0}\int_{G_{2}^{\delta}\cup G_{3}^{\delta}}K(x,y)\Phi'(\uep(y)-\uep(x))
 \left(\varphi(y)-\varphi(x)\right)\Lambda^{\varepsilon}(x,y)\rmd y\rmd x=0.
 	\end{equation}
 	Similarly, we have
 	\begin{equation}\label{14}
 		\lim\limits_{\delta\rightarrow0}\int_{G_{2}^{\delta}\cup G_{3}^{\delta}}K(x,y)\Phi'(\tilde{u}(y)-\tilde{u}(x))\left(\varphi(y)-\varphi(x)\right)
 \overline{\Lambda}(x,y)\rmd y\rmd x=0.
 	\end{equation}
 	We proceed to prove that as $\varepsilon\rightarrow0$,
 	 \begin{equation}\label{15}
 		\begin{aligned}	
 			J_{1}^{\varepsilon}:&=\int_{G_{1}^{\delta}}K(x,y)\Phi'(\uep(y)-\uep(x))
 \left(\varphi(y)-\varphi(x)\right)\Lambda^{\varepsilon}(x,y)\rmd y\rmd x\\
 			&\rightarrow\int_{G_{1}^{\delta}}K(x,y)\Phi'(\tilde{u}(y)-\tilde{u}(x))
 \left(\varphi(y)-\varphi(x)\right)\overline{\Lambda}(x,y)\rmd y\rmd x=:J_{1}^{0}.
 		\end{aligned}	
 	\end{equation} 
 	In fact, by the triangle inequality, we arrive at
 	 \begin{equation}\label{16}
 		\begin{aligned}	
 			\left|J_{1}^{\varepsilon}-J_{1}^{0}\right|		&\leq\int_{G_{1}^{\delta}}K(x,y)\left|\Phi'(\uep(y)-\uep(x))-
 \Phi'(\tilde{u}(y)-\tilde{u}(x))\right|
 			\left|\varphi(y)-\varphi(x)\right|\Lambda^{\varepsilon}(x,y)\rmd y\rmd x\\
 			&\quad+\left|\int_{G_{1}^{\delta}}K(x,y)\left(\Lambda^{\varepsilon}(x,y)-
 \overline{\Lambda}(x,y)\right)\Phi'
 \left(\tilde{u}(y)-\tilde{u}(x)\right)\left(\varphi(y)-\varphi(x)\right)\rmd y\rmd x\right|\\
 			&=:I_{1}+I_{2}.
 		\end{aligned}	
 	\end{equation} 
 	 Applying \cite[Lemma 3.1]{MR2085184} and the fact that  $\{\Lambda^{\varepsilon}\}$ is bounded uniformly in $\varepsilon$ in $L^{p}\left(G_{1}^{\delta}\right)$, we deduce that
 	\begin{equation}\label{17}
 		\Lambda^{\varepsilon}\rightarrow \overline{\Lambda} \quad {\rm weakly\; in}\;  L^{p}\left(G_{1}^{\delta}\right),
 	\end{equation}
 	as $\varepsilon\rightarrow 0$. Combining (\ref{17}) with the fact that
 		$K(x,y)\Phi'(\tilde{u}(y)-\tilde{u}(x))\left(\varphi(y)-\varphi(x)\right)\in L^{p'}\left(G_{1}^{\delta}\right)$, we obtain
 	\begin{equation}\label{18}
 		I_{2}\rightarrow 0 \quad {\rm as \;}\varepsilon\rightarrow 0.
 	\end{equation}
 	Next, we analyze $I_{1}$ under two cases. In the first case, when $1<p\leq 2$, by using \eqref{eq:s3}, \eqref{eq:Phi derivation condition}, the fact that $K(x,y)\leq C$ for all $(x,y)\in G_{1}^{\delta}$, and H\"{o}lder's inequality, we have
 	\begin{align*}
 		I_{1}
 		&\lesssim \int_{G_{1}^{\delta}}\left(\left|u^{\varepsilon}(y)-\tilde{u}(y)\right|^{p-1}+
 \left|u^{\varepsilon}(x)-\tilde{u}(x)\right|^{p-1}\right)|\varphi(y)-\varphi(x)|\rmd y\rmd x\\
 		&\lesssim \left(\int_{|x|\leq \delta^{-1}}\left|u^{\varepsilon}(x)-\tilde{u}(x)\right|^{p}\rmd x\right)^{1/p'},
 	\end{align*}
 	which along with the fact that $u^{\varepsilon} \rightarrow \tilde{u}$ strongly in $\prdloc$ yields 
 \begin{equation}\label{19}
 		I_{1}\rightarrow 0 \quad {\rm as \;}\varepsilon\rightarrow 0.
 	\end{equation}
 	In the second case, when $p>2$, employing \eqref{eq:s3}, (\ref{1}), and \eqref{eq:Phi derivation condition}, we have
 	\begin{align*}
 		I_{1}&\lesssim \int_{G_{1}^{\delta}}\frac{\left|u^{\varepsilon}(y)-u^{\varepsilon}(x)\right|^{p-2}
 \left|u^{\varepsilon}(y)-u^{\varepsilon}(x)-\left(\tilde{u}(y)-\tilde{u}(x)\right)
 \right||\varphi(y)-\varphi(x)|}{|x-y|^{d+\alpha}}\rmd y\rmd x\\
 		&\quad+\int_{G_{1}^{\delta}}\frac{\left|\tilde{u}(y)-\tilde{u}(x)\right|^{p-2}
 \left|u^{\varepsilon}(y)-u^{\varepsilon}(x)-\left(\tilde{u}(y)-\tilde{u}(x)\right)
 \right||\varphi(y)-\varphi(x)|}{|x-y|^{d+\alpha}}\rmd y\rmd x\\
 		&=:J_{1}+J_{2}.
 	\end{align*}
 	By the triangle inequality, H\"{o}lder's inequality, and \eqref{10}, we have
 	\begin{align*}
 		J_{1}&\lesssim \left(\int_{G_{1}^{\delta}}\frac{\left|u^{\varepsilon}(y)-
 u^{\varepsilon}(x)\right|^{p}}{|x-y|^{d+\alpha}}\rmd y\rmd x\right)^{(p-2)/p}
 		\left(\int_{G_{1}^{\delta}}\frac{|\varphi(y)-\varphi(x)|^{p}}{|x-y|^{d+\alpha}}\rmd y\rmd x\right)^{1/p}\\	&\quad\times\left(\int_{G_{1}^{\delta}}\frac{\left(\left|u^{\varepsilon}(y)-\tilde{u}(y)\right|+
 \left|u^{\varepsilon}(x)-\tilde{u}(x)\right|\right)^{p}}{|x-y|^{d+\alpha}}\rmd y\rmd x\right)^{1/p}\\
 &\lesssim\left(\int_{G_{1}^{\delta}}\left(\left|u^{\varepsilon}(y)-\tilde{u}(y)\right|+
 \left|u^{\varepsilon}(x)-\tilde{u}(x)\right|\right)^{p}\rmd y\rmd x\right)^{1/p},
 	\end{align*}
 	which together with the fact that $u^{\varepsilon} \rightarrow \tilde{u}$ strongly in $\prdloc$ yields that
 	\begin{equation*}
 		J_{1}\rightarrow 0 \quad {\rm as \;}\varepsilon\rightarrow 0.
 	\end{equation*}
 	Similarly, we have 
 	\begin{equation*}
 		J_{2}\rightarrow 0 \quad {\rm as \;}\varepsilon\rightarrow 0.
 	\end{equation*}
 	Combining the above relations gives (\ref{19}). It follows from (\ref{16}), (\ref{18}), and (\ref{19}) that (\ref{15}) holds. Therefore, our claim
 	\eqref{110} is an immediate consequence of
 	(\ref{13})--(\ref{15}).
 	
 	\textbf{Step 3.}  We are left with the task of proving that $\|u^{\varepsilon}-u^{0}\|_{L^{p}\left(\mathbb{R}^{d}\right)}\rightarrow 0$ as $\varepsilon\rightarrow 0$. 
 	We argue by contradiction. If false,
 	there exists a subsequence, still denoted by $\{u^{\varepsilon}\}_{\varepsilon>0}$, such that 
 	\begin{equation}\label{strict}
 		\lim\limits_{\varepsilon\rightarrow 0}\|u^{\varepsilon}\|_{L^{p}\left(\mathbb{R}^{d}\right)}>\|u^{0}\|_{L^{p}\left(\mathbb{R}^{d}\right)}.
 	\end{equation}	
 	We now claim that 
 		 \begin{equation}\label{28}
 		\begin{aligned}	
 			&\liminf\limits_{\varepsilon\rightarrow 0}\iintrdd K(x,y)\Phi'(\uep(y)-\uep(x))(\uep(y)-\uep(x))\Lambda^{\varepsilon}(x,y)\rmd y\rmd x\\
 			\geq&\iintrdd K(x,y)\Phi'(u^0(y)-u^0(x))(u^0(y)-u^0(x))\overline{\Lambda}(x,y)\rmd y\rmd x.
 		\end{aligned}	
 	\end{equation}
 	In fact, we utilize a similar argument to the one in the proof of \eqref{110}.  Since $u^{0}\in W^{\alpha/p,p}\left(\mathbb{R}^{d}\right)$, for any $\kappa>0$, there exists $\delta>0$
 	such that 
 	\begin{equation*}
 		\int_{G_{2}^{\delta}\cup G_{3}^{\delta}}K(x,y)\left|u^{0}(y)-u^{0}(x)\right|^{p}\overline{\Lambda}(x,y)\rmd y\rmd x\leq \kappa.
 	\end{equation*}
 	 Hence, 
 	 \begin{equation}\label{290}
 		\begin{aligned}	
 			&\liminf\limits_{\varepsilon\rightarrow 0}\int_{G_{2}^{\delta}\cup G_{3}^{\delta}}K(x,y)\Phi'(\uep(y)-\uep(x))(\uep(y)-\uep(x))\Lambda^{\varepsilon}(x,y)\rmd y\rmd x\\
 			&\geq 0\geq \int_{G_{2}^{\delta}\cup G_{3}^{\delta}}K(x,y)\Phi'(u^0(y)-u^0(x))(u^0(y)-u^0(x))\overline{\Lambda}(x,y)\rmd y\rmd x-\kappa.
 		\end{aligned}	
 	\end{equation}
 	We proceed to prove that as $\varepsilon\rightarrow 0$,
 	\begin{equation}\label{30}
 		\begin{aligned} &\int_{G_{1}^{\delta}}K(x,y)\Phi'(\uep(y)-\uep(x))(\uep(y)-\uep(x))\Lambda^{\varepsilon}(x,y)\rmd y\rmd x\\
 \rightarrow &\int_{G_{1}^{\delta}}K(x,y)\Phi'(u^0(y)-u^0(x))(u^0(y)-u^0(x))\overline{\Lambda}(x,y)\rmd y\rmd x.
 	\end{aligned}
 \end{equation}
 	Once we have proved \eqref{30}, combining (\ref{290}) with (\ref{30}) yields that
 	\begin{align*}
 		&\liminf\limits_{\varepsilon\rightarrow 0}\iintrdd K(x,y)\Phi'(\uep(y)-\uep(x))(\uep(y)-\uep(x))\Lambda^{\varepsilon}(x,y)\rmd y\rmd x\\
 		&\geq\iintrdd K(x,y)\Phi'(u^0(y)-u^0(x))(u^0(y)-u^0(x))\overline{\Lambda}(x,y)\rmd y\rmd x-\kappa.
 	\end{align*}
 	By the arbitrariness of $\kappa>0$, we obtain the desired relation \eqref{28}.
 	
 	We now prove \eqref{30}. Since $C_{c}^{\infty}\left(\mathbb{R}^{d}\right)$ is dense in $W^{\alpha/p,p}\left(\mathbb{R}^{d}\right)$, for any $\eta>0$, there exists $u_{\eta}\in C_{c}^{\infty}\left(\mathbb{R}^{d}\right)$ such that
 	\begin{align}\label{31}
 		\|u^{0}-u_{\eta}\|_{W^{\alpha/p,p}\left(\mathbb{R}^{d}\right)}<\eta.
 	\end{align}
 	By the symmetry of $K(x,y)$, $\Lambda^{\varepsilon}(x,y)$, and the domain $G_{1}^{\delta}$, we have
 		 \begin{equation}\label{29}
 		\begin{aligned}	
 				&\frac{1}{2}\int_{G_{1}^{\delta}}K(x,y)\Phi'(\uep(y)-\uep(x))(\uep(y)-\uep(x))
 \Lambda^{\varepsilon}(x,y)\rmd y\rmd x\\
 			&=-\int_{G_{1}^{\delta}}K(x,y)\Phi'(\uep(y)-\uep(x))u^{\varepsilon}(x)
 \Lambda^{\varepsilon}(x,y)\rmd y\rmd x\\
 			&=-\int_{G_{1}^{\delta}}K(x,y)\Phi'(\uep(y)-\uep(x))
 \left(u^{\varepsilon}(x)-u^{0}(x)+u^{0}(x)-u_{\eta}(x)+u_{\eta}(x)\right)
 \Lambda^{\varepsilon}(x,y)\rmd y\rmd x\\
 			&=\frac{1}{2}\int_{G_{1}^{\delta}}K(x,y)\Phi'(\uep(y)-\uep(x))
 \left[\left(u^{\varepsilon}-u^{0}\right)(y)-\left(u^{\varepsilon}-u^{0}\right)(x)\right]
 \Lambda^{\varepsilon}(x,y)\rmd y\rmd x\\
 			&\quad+\frac{1}{2}\int_{G_{1}^{\delta}}K(x,y)\Phi'(\uep(y)-\uep(x))
 \left[\left(u^{0}-u_{\eta}\right)(y)-\left(u^{0}-
 u_{\eta}\right)(x)\right]\Lambda^{\varepsilon}(x,y)\rmd y\rmd x\\
 			&\quad+\frac{1}{2}\int_{G_{1}^{\delta}}K(x,y)\Phi'(\uep(y)-\uep(x))
 \left[u_{\eta}(y)-u_{\eta}(x)\right]\Lambda^{\varepsilon}(x,y)\rmd y\rmd x\\
 			&=:\Upsilon_{1}+\Upsilon_{2}+\Upsilon_{3}.
 		\end{aligned}	
 	\end{equation}
 	We consider the integrals $\Upsilon_{1}$, $\Upsilon_{2}$, and $\Upsilon_{3}$ separately. 
 	By \eqref{eq:s3}, (\ref{1}), \eqref{eq:Phi derivation condition},
 \eqref{10}, and H\"{o}lder's inequality, we get
 	\begin{align*}
 		|\Upsilon_{1}|&\lesssim [u^{\varepsilon}]_{W^{s,p}\left(\mathbb{R}^{d}\right)}^{p-1}\left(\int_{G_{1}^{\delta}}
 \frac{\left|\left(u^{\varepsilon}-u^{0}\right)(y)-\left(u^{\varepsilon}
 -u^{0}\right)(x)\right|^{p}}{|x-y|^{d+\alpha}}\rmd y\rmd x\right)^{1/p}\\
 		&\lesssim \left(\int_{\{|y|\leq \delta^{-1}\}}\left|u^{\varepsilon}(y)-u^{0}(y)\right|^{p}\rmd y+\int_{\{|x|\leq \delta^{-1}\}}\left|u^{\varepsilon}(x)-u^{0}(x)\right|^{p}\rmd x\right)^{1/p},
 	\end{align*}
 	which along with the fact that $u^{\varepsilon} \rightarrow u^{0}$ strongly in $\prdloc$ yields that
 	\begin{equation}\label{33}
 		|\Upsilon_{1}|\rightarrow 0 \quad {\rm as \;} \varepsilon\rightarrow 0.
 	\end{equation}
 	Similarly, we have 
 	\begin{align}\label{34}
 		|\Upsilon_{2}|\lesssim [u^{\varepsilon}]_{W^{\alpha/p,p}\left(\mathbb{R}^{d}\right)}^{p-1}[u^{0}-u_{\eta}]_{W^{\alpha/p,p}
 \left(\mathbb{R}^{d}\right)}\lesssim\eta.
 	\end{align}
 	Since $u_{\eta}\in C_{c}^{\infty}\left(\mathbb{R}^{d}\right)$, we directly deduce from \eqref{15} that
 	\begin{align}\label{35}
 		\lim_{\varepsilon\rightarrow 0}\Upsilon_{3}=\frac{1}{2}\int_{G_{1}^{\delta}}K(x,y)\Phi'\left(u^{0}(y)-u^{0}(x)\right)
 \left(u_{\eta}(y)-u_{\eta}(x)\right)\overline{\Lambda}(x,y)\rmd y\rmd x.
 	\end{align}
 	Applying \eqref{eq:Phi derivation condition} and (\ref{31})--(\ref{35}), together with H\"{o}lder's inequality, we arrive at
 	\begin{equation*}
 \begin{aligned}
 &\limsup_{\varepsilon\rightarrow0}\frac{1}{2}
 	  \left\lvert\int_{G_{1}^{\delta}}K(x,y)\Phi'(\uep(y)-\uep(x))(\uep(y)-\uep(x))
 \Lambda^{\varepsilon}(x,y)\rmd y\rmd x\right.\\
&\left.\quad-\int_{G_{1}^{\delta}}K(x,y)\Phi'(u^0(y)-u^0(x))(u^0(y)-u^0(x))\overline{\Lambda}(x,y)\rmd y\rmd x\right\rvert\\
&\leq C\eta+\frac{1}{2}\left|\int_{G_{1}^{\delta}}K(x,y)\Phi'\left(u^{0}(y)-u^{0}(x)\right)
 \left[\left(u_{\eta}-u^{0}\right)(y)-\left(u_{\eta}-u^{0}\right)(x)\right]\overline{\Lambda}(x,y)\rmd y\rmd x\right|\\
 		&\lesssim \eta+[u^{0}]_{W^{\alpha/p,p}\left(\mathbb{R}^{d}\right)}^{p-1}[u_{\eta}-u^{0}]_{W^{\alpha/p,p}
 \left(\mathbb{R}^{d}\right)}\lesssim\eta.
 	\end{aligned}
 	\end{equation*}
 Due to the arbitrariness of $\eta>0$, we get the desired relation \eqref{30}.

 On the one hand, testing equation \eqref{52} against $u^{\varepsilon}$ and passing to the limit,  from the fact that $u^{\varepsilon}$ converges to $u^{0}$ weakly in $L^{p}\left(\mathbb{R}^{d}\right)$ and that $f\in L^{p'}\left(\mathbb{R}^{d}\right)$,
 it follows that
 \begin{equation}\label{posit}
   \begin{aligned}
 		&\lim_{\varepsilon\rightarrow 0}\left(\frac{1}{2}\iintrdd K(x,y)\Phi'(\uep(y)-\uep(x))(\uep(y)-\uep(x))\Lambda^{\varepsilon}(x,y)\rmd y\rmd x+m\int_{\mathbb R^d}|u^{\varepsilon}|^{p}\rmd x\right)\\
 &=\lim_{\varepsilon\rightarrow 0}\int_{\mathbb R^d}fu^{\varepsilon}\rmd x=\int_{\mathbb R^d}fu^{0}\rmd x.
 	\end{aligned}
 \end{equation}
 	On the other hand,
 	Combining \eqref{strict} with (\ref{28}) for the same subsequence we obtain
 	 \begin{equation*}
 		\begin{aligned}	
 			&\lim_{\varepsilon\rightarrow 0}\left(\frac{1}{2}\iintrdd K(x,y)\Phi'(\uep(y)-\uep(x))(\uep(y)-\uep(x))\Lambda^{\varepsilon}(x,y)\rmd y\rmd x+m\int_{\mathbb R^d}|u^{\varepsilon}|^{p}\rmd x\right)\\
 			&>\frac{1}{2}\iintrdd K(x,y)\Phi'(u^0(y)-u^0(x))(u^0(y)-u^0(x))\overline{\Lambda}(x,y)\rmd y\rmd x+m\int_{\mathbb R^d}|u^{0}|^{p}\rmd x
 			=\int_{\mathbb R^d}fu^{0}\rmd x,
 		\end{aligned}	
 	\end{equation*}
 	where we have used the fact that $u^{0}$ satisfies $-L^{0}_{\Phi}u+m|u|^{p-2}u=f$ in the last equality.
 	This contradicts \eqref{posit}. Thus, we conclude that 
 	$u^{\varepsilon} \rightarrow u^{0}$ strongly in $L^{p}\left(\mathbb{R}^{d}\right)$ as $\varepsilon \rightarrow 0 $, which completes the proof of Theorem \ref{thm1}.
 \end{proof}
\section*{Acknowledgments} 
The work of the third author is partially supported by National Natural Science Foundation of China (NSFC Grant No. 12101452) and Young Scientific and Technological Talents (Level Three) in Tianjin, and the work of the corresponding author is partially supported by National Natural Science Foundation of China (NSFC Grant No. 12571103).

\bibliography{ref2}

@article {MR348255,
    AUTHOR = {De Giorgi, E. and Spagnolo, S.},
     TITLE = {Sulla convergenza degli integrali dell'energia per operatori
              ellittici del secondo ordine},
   JOURNAL = {Boll. Un. Mat. Ital. (4)},
  FJOURNAL = {Boll. Un. Mat. Ital. (4)},
    VOLUME = {8},
      YEAR = {1973},
     PAGES = {391--411},
   MRCLASS = {35J15 (47F05)},
  MRNUMBER = {348255},
MRREVIEWER = {M.\ Schechter},
}

@article {MR225015,
    AUTHOR = {Spagnolo, Sergio},
     TITLE = {Sul limite delle soluzioni di problemi di {C}auchy relativi
              all'equazione del calore},
   JOURNAL = {Ann. Scuola Norm. Sup. Pisa Cl. Sci. (3)},
  FJOURNAL = {Annali della Scuola Normale Superiore di Pisa. Classe di
              Scienze. Serie III},
    VOLUME = {21},
      YEAR = {1967},
     PAGES = {657--699},
      ISSN = {0391-173X},
   MRCLASS = {35.63},
  MRNUMBER = {225015},
MRREVIEWER = {R.\ Guenther},
}

@article {MR240443,
    AUTHOR = {Spagnolo, S.},
     TITLE = {Sulla convergenza di soluzioni di equazioni paraboliche ed
              ellittiche},
   JOURNAL = {Ann. Scuola Norm. Sup. Pisa (3)},
  FJOURNAL = {Ann. Scuola Norm. Sup. Pisa (3)},
    VOLUME = {22},
      YEAR = {1968},
     PAGES = {571--597; errata: 22 (1968), 673},
   MRCLASS = {35.42},
  MRNUMBER = {240443},
MRREVIEWER = {L.\ von Wolfersdorf},
}

@book {MR3838419,
    AUTHOR = {Shen, Zhongwei},
     TITLE = {Periodic homogenization of elliptic systems},
    SERIES = {Operator Theory: Advances and Applications},
    VOLUME = {269},
      NOTE = {Advances in Partial Differential Equations (Basel)},
 PUBLISHER = {Birkh\"auser/Springer, Cham},
      YEAR = {2018},
     PAGES = {ix+291},
      ISBN = {978-3-319-91213-4},
   MRCLASS = {35-02 (35B27 35J57 74Q05)},
  MRNUMBER = {3838419},
MRREVIEWER = {Isabelle\ Gruais},
       DOI = {10.1007/978-3-319-91214-1},
       URL = {https://doi.org/10.1007/978-3-319-91214-1},
}

@article {MR542557,
    AUTHOR = {Kozlov, S. M.},
     TITLE = {The averaging of random operators},
   JOURNAL = {Mat. Sb. (N.S.)},
  FJOURNAL = {Matematicheski\u i\ Sbornik. Novaya Seriya},
    VOLUME = {109(151)},
      YEAR = {1979},
    NUMBER = {2},
     PAGES = {188--202, 327},
      ISSN = {0368-8666},
   MRCLASS = {35R60 (60H15)},
  MRNUMBER = {542557},
MRREVIEWER = {Z.\ Schuss},
}

@incollection {MR712714,
    AUTHOR = {Papanicolaou, G. C. and Varadhan, S. R. S.},
     TITLE = {Boundary value problems with rapidly oscillating random
              coefficients},
 BOOKTITLE = {Random fields, {V}ol. {I}, {II} ({E}sztergom, 1979)},
    SERIES = {Colloq. Math. Soc. J\'anos Bolyai},
    VOLUME = {27},
     PAGES = {835--873},
 PUBLISHER = {North-Holland, Amsterdam-New York},
      YEAR = {1981},
      ISBN = {0-444-85441-X},
   MRCLASS = {58G32 (35R60 60H10 82A05)},
  MRNUMBER = {712714},
MRREVIEWER = {Jacques\ Vauthier},
}

@article {MR4000213,
    AUTHOR = {Kassmann, M. and Piatnitski, A. and Zhizhina, E.},
     TITLE = {Homogenization of {L}\'evy-type operators with oscillating
              coefficients},
   JOURNAL = {SIAM J. Math. Anal.},
  FJOURNAL = {SIAM Journal on Mathematical Analysis},
    VOLUME = {51},
      YEAR = {2019},
    NUMBER = {5},
     PAGES = {3641--3665},
      ISSN = {0036-1410,1095-7154},
   MRCLASS = {45E10 (35B27 45M05 60J75)},
  MRNUMBER = {4000213},
MRREVIEWER = {Andreas\ Rathsfeld},
       DOI = {10.1137/18M1200038},
       URL = {https://doi.org/10.1137/18M1200038},
}

@article {MR4111815,
    AUTHOR = {Chaker, Jamil and Silvestre, Luis},
     TITLE = {Coercivity estimates for integro-differential operators},
   JOURNAL = {Calc. Var. Partial Differential Equations},
  FJOURNAL = {Calculus of Variations and Partial Differential Equations},
    VOLUME = {59},
      YEAR = {2020},
    NUMBER = {4},
     PAGES = {Paper No. 106, 20},
      ISSN = {0944-2669,1432-0835},
   MRCLASS = {47G20 (26A33 35R09)},
  MRNUMBER = {4111815},
MRREVIEWER = {David\ Kapanadze},
       DOI = {10.1007/s00526-020-01764-y},
       URL = {https://doi.org/10.1007/s00526-020-01764-y},
}

@book {MR4769823,
    AUTHOR = {Fern\'andez-Real, Xavier and Ros-Oton, Xavier},
     TITLE = {Integro-differential elliptic equations},
    SERIES = {Progress in Mathematics},
    VOLUME = {350},
 PUBLISHER = {Birkh\"auser/Springer, Cham},
      YEAR = {2024},
     PAGES = {xvi+395},
      ISBN = {978-3-031-54241-1; 978-3-031-54242-8},
   MRCLASS = {45-02 (35B65 35J60 35J86 35R09 35R35)},
  MRNUMBER = {4769823},
       DOI = {10.1007/978-3-031-54242-8},
       URL = {https://doi.org/10.1007/978-3-031-54242-8},
}

@article {MR4049224,
    AUTHOR = {Imbert, Cyril and Silvestre, Luis},
     TITLE = {The weak {H}arnack inequality for the {B}oltzmann equation
              without cut-off},
   JOURNAL = {J. Eur. Math. Soc.},
  FJOURNAL = {Journal of the European Mathematical Society (JEMS)},
    VOLUME = {22},
      YEAR = {2020},
    NUMBER = {2},
     PAGES = {507--592},
      ISSN = {1435-9855,1435-9863},
   MRCLASS = {35Q20 (35B45 35B65 35R09 82C40)},
  MRNUMBER = {4049224},
MRREVIEWER = {Andrei\ Tarfulea},
       DOI = {10.4171/jems/928},
       URL = {https://doi.org/10.4171/jems/928},
}

@book {MR1201152,
    AUTHOR = {Dal Maso, Gianni},
     TITLE = {An introduction to {$\Gamma$}-convergence},
    SERIES = {Progress in Nonlinear Differential Equations and their
              Applications},
    VOLUME = {8},
 PUBLISHER = {Birkh\"auser Boston, Inc., Boston, MA},
      YEAR = {1993},
     PAGES = {xiv+340},
      ISBN = {0-8176-3679-X},
   MRCLASS = {49-02 (46N10 47H99 47N10 49J45 73B27)},
  MRNUMBER = {1201152},
MRREVIEWER = {T.\ Zolezzi},
       DOI = {10.1007/978-1-4612-0327-8},
       URL = {https://doi.org/10.1007/978-1-4612-0327-8},
}

@article {MR375037,
    AUTHOR = {De Giorgi, Ennio},
     TITLE = {Sulla convergenza di alcune successioni d'integrali del tipo
              dell'area},
   JOURNAL = {Rend. Mat. (6)},
  FJOURNAL = {Rendiconti di Matematica. Serie VI.},
    VOLUME = {8},
      YEAR = {1975},
     PAGES = {277--294},
      ISSN = {0034-4427},
   MRCLASS = {49A50 (35J20)},
  MRNUMBER = {375037},
MRREVIEWER = {L.\ von Wolfersdorf},
}

@article {MR448194,
    AUTHOR = {De Giorgi, Ennio and Franzoni, Tullio},
     TITLE = {Su un tipo di convergenza variazionale},
   JOURNAL = {Atti Accad. Naz. Lincei Rend. Cl. Sci. Fis. Mat. Nat. (8)},
  FJOURNAL = {Atti della Accademia Nazionale dei Lincei. Rendiconti. Classe
              di Scienze Fisiche, Matematiche e Naturali. Serie VIII},
    VOLUME = {58},
      YEAR = {1975},
    NUMBER = {6},
     PAGES = {842--850},
      ISSN = {0392-7881},
   MRCLASS = {49A99},
  MRNUMBER = {448194},
MRREVIEWER = {S.\ Cinquini},
}

@book {MR1968440,
    AUTHOR = {Braides, Andrea},
     TITLE = {{$\Gamma$}-convergence for beginners},
    SERIES = {Oxford Lecture Series in Mathematics and its Applications},
    VOLUME = {22},
 PUBLISHER = {Oxford University Press, Oxford},
      YEAR = {2002},
     PAGES = {xii+218},
      ISBN = {0-19-850784-4},
   MRCLASS = {49-02 (35B27 49J45 49K40 74Q05)},
  MRNUMBER = {1968440},
MRREVIEWER = {Ilaria\ Fragal{\`a}},
       DOI = {10.1093/acprof:oso/9780198507840.001.0001},
       URL = {https://doi.org/10.1093/acprof:oso/9780198507840.001.0001},
}

@book {MR2759829,
    AUTHOR = {Brezis, Haim},
     TITLE = {Functional analysis, {S}obolev spaces and partial differential
              equations},
    SERIES = {Universitext},
 PUBLISHER = {Springer, New York},
      YEAR = {2011},
     PAGES = {xiv+599},
      ISBN = {978-0-387-70913-0},
   MRCLASS = {35-01 (46-01 46E35 46N20 47F05)},
  MRNUMBER = {2759829},
MRREVIEWER = {Vicen\c tiu\ D.\ R\u adulescu},
}

@book {MR1727362,
    AUTHOR = {Ekeland, Ivar and T{\'e}mam, Roger},
     TITLE = {Convex analysis and variational problems},
    SERIES = {Classics in Applied Mathematics},
    VOLUME = {28},
   EDITION = {English},
      NOTE = {Translated from the French},
 PUBLISHER = {Society for Industrial and Applied Mathematics (SIAM),
              Philadelphia, PA},
      YEAR = {1999},
     PAGES = {xiv+402},
      ISBN = {0-89871-450-8},
   MRCLASS = {49-02 (01A75 49J53 90C46)},
  MRNUMBER = {1727362},
       DOI = {10.1137/1.9781611971088},
       URL = {https://doi.org/10.1137/1.9781611971088},
}

@book{jikov2012homogenization,
  title={Homogenization of differential operators and integral functionals},
  author={Jikov, Vasili Vasilievitch and Kozlov, Sergei M and Oleinik, Olga Arsenievna},
  year={2012},
  publisher={Springer Science \& Business Media}
}

@article {MR2480109,
    AUTHOR = {Gilboa, Guy and Osher, Stanley},
     TITLE = {Nonlocal operators with applications to image processing},
   JOURNAL = {Multiscale Model. Simul.},
  FJOURNAL = {Multiscale Modeling \& Simulation. A SIAM Interdisciplinary
              Journal},
    VOLUME = {7},
      YEAR = {2008},
    NUMBER = {3},
     PAGES = {1005--1028},
      ISSN = {1540-3459,1540-3467},
   MRCLASS = {94A08 (35A15 68U10)},
  MRNUMBER = {2480109},
MRREVIEWER = {Bartomeu\ Coll},
       DOI = {10.1137/070698592},
       URL = {https://doi.org/10.1137/070698592},
}

@article {MR2120547,
    AUTHOR = {Carrillo, C. and Fife, P.},
     TITLE = {Spatial effects in discrete generation population models},
   JOURNAL = {J. Math. Biol.},
  FJOURNAL = {Journal of Mathematical Biology},
    VOLUME = {50},
      YEAR = {2005},
    NUMBER = {2},
     PAGES = {161--188},
      ISSN = {0303-6812,1432-1416},
   MRCLASS = {92D25 (39A11)},
  MRNUMBER = {2120547},
MRREVIEWER = {Carol\ B.\ Overdeep},
       DOI = {10.1007/s00285-004-0284-4},
       URL = {https://doi.org/10.1007/s00285-004-0284-4},
}

@article {MR3327993,
    AUTHOR = {Tanzy, M. C. and Volpert, V. A. and Bayliss, A. and Nehrkorn,
              M. E.},
     TITLE = {A {N}agumo-type model for competing populations with nonlocal
              coupling},
   JOURNAL = {Math. Biosci.},
  FJOURNAL = {Mathematical Biosciences},
    VOLUME = {263},
      YEAR = {2015},
     PAGES = {70--82},
      ISSN = {0025-5564,1879-3134},
   MRCLASS = {92D25},
  MRNUMBER = {3327993},
       DOI = {10.1016/j.mbs.2015.01.014},
       URL = {https://doi.org/10.1016/j.mbs.2015.01.014},
}

@article {MR3595876,
    AUTHOR = {Piatnitski, A. and Zhizhina, E.},
     TITLE = {Periodic homogenization of nonlocal operators with a
              convolution-type kernel},
   JOURNAL = {SIAM J. Math. Anal.},
  FJOURNAL = {SIAM Journal on Mathematical Analysis},
    VOLUME = {49},
      YEAR = {2017},
    NUMBER = {1},
     PAGES = {64--81},
      ISSN = {0036-1410,1095-7154},
   MRCLASS = {45E10 (35B10 35B27 45M05 60J75)},
  MRNUMBER = {3595876},
MRREVIEWER = {Andreas\ Rathsfeld},
       DOI = {10.1137/16M1072292},
       URL = {https://doi.org/10.1137/16M1072292},
}

@article {MR4053030,
    AUTHOR = {Piatnitski, A. and Zhizhina, E.},
     TITLE = {Stochastic homogenization of convolution type operators},
   JOURNAL = {J. Math. Pures Appl. (9)},
  FJOURNAL = {Journal de Math{\'e}matiques Pures et Appliqu{\'e}es. Neuvi{\`e}me
              S{\'}erie},
    VOLUME = {134},
      YEAR = {2020},
     PAGES = {36--71},
      ISSN = {0021-7824,1776-3371},
   MRCLASS = {35B27 (45E10 47B25 60H25)},
  MRNUMBER = {4053030},
       DOI = {10.1016/j.matpur.2019.12.001},
       URL = {https://doi.org/10.1016/j.matpur.2019.12.001},
}

@article {MR4033742,
    AUTHOR = {Piatnitski, A. and Zhizhina, E.},
     TITLE = {Homogenization of biased convolution type operators},
   JOURNAL = {Asymptot. Anal.},
  FJOURNAL = {Asymptotic Analysis},
    VOLUME = {115},
      YEAR = {2019},
    NUMBER = {3-4},
     PAGES = {241--262},
      ISSN = {0921-7134,1875-8576},
   MRCLASS = {35B27 (35R09)},
  MRNUMBER = {4033742},
       DOI = {10.3233/asy-191533},
       URL = {https://doi.org/10.3233/asy-191533},
}

@article {MR4853448,
    AUTHOR = {Piatnitski, A. and Zhizhina, E.},
     TITLE = {Homogenization of non-autonomous evolution problems for
              convolution type operators in randomly evolving media},
   JOURNAL = {J. Math. Pures Appl. (9)},
  FJOURNAL = {Journal de Math\'ematiques Pures et Appliqu\'ees. Neuvi\`eme
              S\'erie},
    VOLUME = {194},
      YEAR = {2025},
     PAGES = {Paper No. 103660, 30},
      ISSN = {0021-7824,1776-3371},
   MRCLASS = {35B27 (35K90 37A25 45E10 45M05)},
  MRNUMBER = {4853448},
MRREVIEWER = {Longjuan\ Xu},
       DOI = {10.1016/j.matpur.2025.103660},
       URL = {https://doi.org/10.1016/j.matpur.2025.103660},
}

@article {MR4532955,
    AUTHOR = {Piatnitski, A. and Sloushch, V. and Suslina, T. and Zhizhina,
              E.},
     TITLE = {On operator estimates in homogenization of nonlocal operators
              of convolution type},
   JOURNAL = {J. Differential Equations},
  FJOURNAL = {Journal of Differential Equations},
    VOLUME = {352},
      YEAR = {2023},
     PAGES = {153--188},
      ISSN = {0022-0396,1090-2732},
   MRCLASS = {47G10 (47N20)},
  MRNUMBER = {4532955},
       DOI = {10.1016/j.jde.2022.12.036},
       URL = {https://doi.org/10.1016/j.jde.2022.12.036},
}

@article {MR4720001,
    AUTHOR = {Piatnitski, A. and Sloushch, V. and Suslina, T. and Zhizhina,
              E.},
     TITLE = {On the homogenization of nonlocal convolution type operators},
   JOURNAL = {Russ. J. Math. Phys.},
  FJOURNAL = {Russian Journal of Mathematical Physics},
    VOLUME = {31},
      YEAR = {2024},
    NUMBER = {1},
     PAGES = {137--145},
      ISSN = {1061-9208,1555-6638},
   MRCLASS = {47G10 (47F10)},
  MRNUMBER = {4720001},
       DOI = {10.1134/S106192084010114},
       URL = {https://doi.org/10.1134/S106192084010114},
}

@article {MR1867081,
    AUTHOR = {Nualart, David and Schoutens, Wim},
     TITLE = {Backward stochastic differential equations and {F}eynman-{K}ac
              formula for {L}{\'e}vy processes, with applications in finance},
   JOURNAL = {Bernoulli},
  FJOURNAL = {Bernoulli. Official Journal of the Bernoulli Society for
              Mathematical Statistics and Probability},
    VOLUME = {7},
      YEAR = {2001},
    NUMBER = {5},
     PAGES = {761--776},
      ISSN = {1350-7265,1573-9759},
   MRCLASS = {60H10 (62P05 91B28)},
  MRNUMBER = {1867081},
       DOI = {10.2307/3318541},
       URL = {https://doi.org/10.2307/3318541},
}

@article {MR2560294,
    AUTHOR = {Arisawa, Mariko},
     TITLE = {Homogenization of a class of integro-differential equations
              with {L}\'evy operators},
   JOURNAL = {Comm. Partial Differential Equations},
  FJOURNAL = {Communications in Partial Differential Equations},
    VOLUME = {34},
      YEAR = {2009},
    NUMBER = {7-9},
     PAGES = {617--624},
      ISSN = {0360-5302,1532-4133},
   MRCLASS = {35B27 (45K05 47G20 74Q10)},
  MRNUMBER = {2560294},
MRREVIEWER = {Karsten\ Matthies},
       DOI = {10.1080/03605300902963518},
       URL = {https://doi.org/10.1080/03605300902963518},
}

@article {MR2981018,
    AUTHOR = {Arisawa, Mariko},
     TITLE = {Homogenizations of integro-differential equations with
              {L}\'evy operators with asymmetric and degenerate densities},
   JOURNAL = {Proc. Roy. Soc. Edinburgh Sect. A},
  FJOURNAL = {Proceedings of the Royal Society of Edinburgh. Section A.
              Mathematics},
    VOLUME = {142},
      YEAR = {2012},
    NUMBER = {5},
     PAGES = {917--943},
      ISSN = {0308-2105,1473-7124},
   MRCLASS = {35B27 (35R09 45K05 47G20)},
  MRNUMBER = {2981018},
       DOI = {10.1017/S0308210510001897},
       URL = {https://doi.org/10.1017/S0308210510001897},
}

@article {MR4793281,
    AUTHOR = {Fern\'andez-Real, Xavier and Ros-Oton, Xavier},
     TITLE = {Schauder and {C}ordes-{N}irenberg estimates for nonlocal
              elliptic equations with singular kernels},
   JOURNAL = {Proc. Lond. Math. Soc. (3)},
  FJOURNAL = {Proceedings of the London Mathematical Society. Third Series},
    VOLUME = {129},
      YEAR = {2024},
    NUMBER = {3},
     PAGES = {Paper No. e12629, 47},
      ISSN = {0024-6115,1460-244X},
   MRCLASS = {47G20 (35B65 35R09 35R11 35S05 60G52)},
  MRNUMBER = {4793281},
MRREVIEWER = {Nino\ Manjavidze},
       DOI = {10.1112/plms.12629},
       URL = {https://doi.org/10.1112/plms.12629},
}

@article {MR2085184,
	AUTHOR = {Zhikov, V. V.},
	TITLE = {On two-scale convergence},
	JOURNAL = {Tr. Semin. im. I. G. Petrovskogo},
	FJOURNAL = {Trudy Seminara imeni I. G. Petrovskogo},
	YEAR = {2003},
	NUMBER = {23},
	PAGES = {149--187, 410},
	ISSN = {0321-2971},
	MRCLASS = {35B27 (28A99 49J45 74Q05)},
	MRNUMBER = {2085184},
	DOI = {10.1023/B:JOTH.0000016052.48558.b4},
	URL = {https://doi.org/10.1023/B:JOTH.0000016052.48558.b4},
}

@book {MR3931688,
	AUTHOR = {Lindqvist, Peter},
	TITLE = {Notes on the stationary {$p$}-{L}aplace equation},
	SERIES = {SpringerBriefs in Mathematics},
	PUBLISHER = {Springer, Cham},
	YEAR = {2019},
	PAGES = {xi+104},
	ISBN = {978-3-030-14500-2; 978-3-030-14501-9},
	MRCLASS = {35-02 (31C45 35A01 35B65 35D30 35D40 35J60 35J92)},
	MRNUMBER = {3931688},
	MRREVIEWER = {Vladimir\ Bobkov},
	DOI = {10.1007/978-3-030-14501-9},
	URL = {https://doi.org/10.1007/978-3-030-14501-9},
}

@article {MR847306,
	AUTHOR = {Giaquinta, Mariano and Modica, Giuseppe},
	TITLE = {Partial regularity of minimizers of quasiconvex integrals},
	JOURNAL = {Ann. Inst. H. Poincar\'e{} Anal. Non Lin\'eaire},
	FJOURNAL = {Annales de l'Institut Henri Poincar\'e. Analyse Non
	Lin\'eaire},
	VOLUME = {3},
	YEAR = {1986},
	NUMBER = {3},
	PAGES = {185--208},
	ISSN = {0294-1449},
	MRCLASS = {49B36 (49B21)},
	MRNUMBER = {847306},
	MRREVIEWER = {T.\ Zolezzi},
	URL = {http://www.numdam.org/item?id=AIHPC_1986__3_3_185_0},
}

@article {MR4919132,
	AUTHOR = {B\"ogelein, Verena and Duzaar, Frank and Liao, Naian and
	Molica Bisci, Giovanni and Servadei, Raffaella},
	TITLE = {Regularity for the fractional {$p$}-{L}aplace equation},
	JOURNAL = {J. Funct. Anal.},
	FJOURNAL = {Journal of Functional Analysis},
	VOLUME = {289},
	YEAR = {2025},
	NUMBER = {9},
	PAGES = {Paper No. 111078},
	ISSN = {0022-1236,1096-0783},
	MRCLASS = {35B65 (35J70 35R09 47G20)},
	MRNUMBER = {4919132},
	DOI = {10.1016/j.jfa.2025.111078},
	URL = {https://doi.org/10.1016/j.jfa.2025.111078},
}

@book {MR4567945,
	AUTHOR = {Leoni, Giovanni},
	TITLE = {A first course in fractional {S}obolev spaces},
	SERIES = {Graduate Studies in Mathematics},
	VOLUME = {229},
	PUBLISHER = {American Mathematical Society, Providence, RI},
	YEAR = {2023},
	PAGES = {xv+586},
	ISBN = {[9781470468989]; [9781470472535]; [9781470472528]},
	MRCLASS = {46-01 (30H05 35R11 42Bxx 42C40 46E35)},
	MRNUMBER = {4567945},
	MRREVIEWER = {E.\ S.\ Dubtsov},
	DOI = {10.1090/gsm/229},
	URL = {https://doi.org/10.1090/gsm/229},
}

@article{Jin2025,
  author  = {Xiaofeng Jin and Lingwei Ma and Zhenqiu Zhang},
  title   = {Qualitative Stochastic Homogenization of Elliptic Equations With Random Coefficients and Convolutional Potentials},
  journal = {Mathematical Methods in the Applied Sciences},
  howpublished={Early View},
  year    = {2025},
  doi     = {10.1002/mma.11182},
  note     = {\url{https://doi.org/10.1002/mma.11182}},
}

@article {MR3862089,
    AUTHOR = {Shen, Zhongwei and Zhuge, Jinping},
     TITLE = {Boundary layers in periodic homogenization of {N}eumann
              problems},
   JOURNAL = {Comm. Pure Appl. Math.},
  FJOURNAL = {Communications on Pure and Applied Mathematics},
    VOLUME = {71},
      YEAR = {2018},
    NUMBER = {11},
     PAGES = {2163--2219},
      ISSN = {0010-3640,1097-0312},
   MRCLASS = {35B27},
  MRNUMBER = {3862089},
MRREVIEWER = {Annalisa\ Cesaroni},
       DOI = {10.1002/cpa.21740},
       URL = {https://doi.org/10.1002/cpa.21740},
}

@article {MR3902170,
    AUTHOR = {Zhuge, Jinping},
     TITLE = {Homogenization and boundary layers in domains of finite type},
   JOURNAL = {Comm. Partial Differential Equations},
  FJOURNAL = {Communications in Partial Differential Equations},
    VOLUME = {43},
      YEAR = {2018},
    NUMBER = {4},
     PAGES = {549--584},
      ISSN = {0360-5302,1532-4133},
   MRCLASS = {35B27 (35C20 35J57 74Q05)},
  MRNUMBER = {3902170},
MRREVIEWER = {Marcus\ Waurick},
       DOI = {10.1080/03605302.2018.1446160},
       URL = {https://doi.org/10.1080/03605302.2018.1446160},
}

@article {MR4566686,
    AUTHOR = {Niu, Weisheng and Zhuge, Jinping},
     TITLE = {Compactness and stable regularity in multiscale
              homogenization},
   JOURNAL = {Math. Ann.},
  FJOURNAL = {Mathematische Annalen},
    VOLUME = {385},
      YEAR = {2023},
    NUMBER = {3-4},
     PAGES = {1431--1473},
      ISSN = {0025-5831,1432-1807},
   MRCLASS = {35B27 (35B65)},
  MRNUMBER = {4566686},
MRREVIEWER = {Alain\ Brillard},
       DOI = {10.1007/s00208-022-02378-9},
       URL = {https://doi.org/10.1007/s00208-022-02378-9},
}

@article {MR4840548,
    AUTHOR = {Geng, Jun and Shi, Bojing},
     TITLE = {Quantitative estimates in almost periodic homogenization of
              parabolic systems},
   JOURNAL = {Calc. Var. Partial Differential Equations},
  FJOURNAL = {Calculus of Variations and Partial Differential Equations},
    VOLUME = {64},
      YEAR = {2025},
    NUMBER = {1},
     PAGES = {Paper No. 33, 57},
      ISSN = {0944-2669,1432-0835},
   MRCLASS = {35B27 (35A08 35K40)},
  MRNUMBER = {4840548},
MRREVIEWER = {Ahmed\ Boughammoura},
       DOI = {10.1007/s00526-024-02881-8},
       URL = {https://doi.org/10.1007/s00526-024-02881-8},
}

@article {MR4377865,
    AUTHOR = {Gloria, Antoine and Neukamm, Stefan and Otto, Felix},
     TITLE = {Quantitative estimates in stochastic homogenization for
              correlated coefficient fields},
   JOURNAL = {Anal. PDE},
  FJOURNAL = {Analysis \& PDE},
    VOLUME = {14},
      YEAR = {2021},
    NUMBER = {8},
     PAGES = {2497--2537},
      ISSN = {2157-5045,1948-206X},
   MRCLASS = {35J15 (35J47 60H25 74Q05)},
  MRNUMBER = {4377865},
       DOI = {10.2140/apde.2021.14.2497},
       URL = {https://doi.org/10.2140/apde.2021.14.2497},
}

@book {MR3932093,
    AUTHOR = {Armstrong, Scott and Kuusi, Tuomo and Mourrat,
              Jean-Christophe},
     TITLE = {Quantitative stochastic homogenization and large-scale
              regularity},
    SERIES = {Grundlehren der mathematischen Wissenschaften [Fundamental
              Principles of Mathematical Sciences]},
    VOLUME = {352},
 PUBLISHER = {Springer, Cham},
      YEAR = {2019},
     PAGES = {xxxviii+518},
      ISBN = {978-3-030-15544-5; 978-3-030-15545-2; 978-3-030-15547-6},
   MRCLASS = {35-02 (35B27 60F17 60H15)},
  MRNUMBER = {3932093},
       DOI = {10.1007/978-3-030-15545-2},
       URL = {https://doi.org/10.1007/978-3-030-15545-2},
}

@article {MR4261110,
    AUTHOR = {Chen, Xin and Chen, Zhen-Qing and Kumagai, Takashi and Wang,
              Jian},
     TITLE = {Homogenization of symmetric stable-like processes in
              stationary ergodic media},
   JOURNAL = {SIAM J. Math. Anal.},
  FJOURNAL = {SIAM Journal on Mathematical Analysis},
    VOLUME = {53},
      YEAR = {2021},
    NUMBER = {3},
     PAGES = {2957--3001},
      ISSN = {0036-1410,1095-7154},
   MRCLASS = {60G51 (60G52 60J25 60J76)},
  MRNUMBER = {4261110},
       DOI = {10.1137/20M1326726},
       URL = {https://doi.org/10.1137/20M1326726},
}

@article {MR4103433,
    AUTHOR = {Gloria, Antoine and Neukamm, Stefan and Otto, Felix},
     TITLE = {A regularity theory for random elliptic operators},
   JOURNAL = {Milan J. Math.},
  FJOURNAL = {Milan Journal of Mathematics},
    VOLUME = {88},
      YEAR = {2020},
    NUMBER = {1},
     PAGES = {99--170},
      ISSN = {1424-9286,1424-9294},
   MRCLASS = {35R60 (35B27 35B65 60H25)},
  MRNUMBER = {4103433},
       DOI = {10.1007/s00032-020-00309-4},
       URL = {https://doi.org/10.1007/s00032-020-00309-4},
}

@article {MR3713047,
    AUTHOR = {Gloria, Antoine and Otto, Felix},
     TITLE = {Quantitative results on the corrector equation in stochastic
              homogenization},
   JOURNAL = {J. Eur. Math. Soc.},
  FJOURNAL = {Journal of the European Mathematical Society (JEMS)},
    VOLUME = {19},
      YEAR = {2017},
    NUMBER = {11},
     PAGES = {3489--3548},
      ISSN = {1435-9855,1435-9863},
   MRCLASS = {35B27 (35J25 39A70 60H15)},
  MRNUMBER = {3713047},
       DOI = {10.4171/JEMS/745},
       URL = {https://doi.org/10.4171/JEMS/745},
}

@article {MR4348681,
    AUTHOR = {Chen, Xin and Chen, Zhen-Qing and Kumagai, Takashi and Wang,
              Jian},
     TITLE = {Periodic homogenization of nonsymmetric {L}\'evy-type
              processes},
   JOURNAL = {Ann. Probab.},
  FJOURNAL = {The Annals of Probability},
    VOLUME = {49},
      YEAR = {2021},
    NUMBER = {6},
     PAGES = {2874--2921},
      ISSN = {0091-1798,2168-894X},
   MRCLASS = {60G51 (60F17 60J25 60J35 60J76)},
  MRNUMBER = {4348681},
MRREVIEWER = {Suprio\ Bhar},
       DOI = {10.1214/21-aop1518},
       URL = {https://doi.org/10.1214/21-aop1518},
}

@incollection {MR4132119,
    AUTHOR = {Du, Qiang and Engquist, Bjorn and Tian, Xiaochuan},
     TITLE = {Multiscale modeling, homogenization and nonlocal effects:
              mathematical and computational issues},
 BOOKTITLE = {75 years of mathematics of computation},
    SERIES = {Contemp. Math.},
    VOLUME = {754},
     PAGES = {115--139},
 PUBLISHER = {Amer. Math. Soc., [Providence], RI},
      YEAR = {2020},
      ISBN = {978-1-4704-5163-9},
   MRCLASS = {65-02 (00A71 35B27 65N99 65R20 70-08 74Q99)},
  MRNUMBER = {4132119},
       DOI = {10.1090/conm/754/15175},
       URL = {https://doi.org/10.1090/conm/754/15175},
}

@article {MR3339179,
    AUTHOR = {Kuusi, Tuomo and Mingione, Giuseppe and Sire, Yannick},
     TITLE = {Nonlocal equations with measure data},
   JOURNAL = {Comm. Math. Phys.},
  FJOURNAL = {Communications in Mathematical Physics},
    VOLUME = {337},
      YEAR = {2015},
    NUMBER = {3},
     PAGES = {1317--1368},
      ISSN = {0010-3616,1432-0916},
   MRCLASS = {35R11 (31B35 35A01 35R06 35R09)},
  MRNUMBER = {3339179},
MRREVIEWER = {Paolo\ Baroni},
       DOI = {10.1007/s00220-015-2356-2},
       URL = {https://doi.org/10.1007/s00220-015-2356-2},
}

@article {MR4693935,
    AUTHOR = {De Filippis, Cristiana and Mingione, Giuseppe},
     TITLE = {Gradient regularity in mixed local and nonlocal problems},
   JOURNAL = {Math. Ann.},
  FJOURNAL = {Mathematische Annalen},
    VOLUME = {388},
      YEAR = {2024},
    NUMBER = {1},
     PAGES = {261--328},
      ISSN = {0025-5831,1432-1807},
   MRCLASS = {49N60 (35J60 35R11 49J10)},
  MRNUMBER = {4693935},
MRREVIEWER = {Xiaodong\ Yan},
       DOI = {10.1007/s00208-022-02512-7},
       URL = {https://doi.org/10.1007/s00208-022-02512-7},
}

\end{document}